\numberwithin{equation}{section}
\newtheorem{thm}{Theorem}[section]
\newtheorem*{thm*}{Theorem}
\newtheorem{prop}[thm]{Proposition}
\newtheorem*{prop*}{Proposition}
\newtheorem{cor}[thm]{Corollary}
\newtheorem{lemma}[thm]{Lemma}
\newtheorem*{question*}{Question}
\theoremstyle{definition}
\newtheorem{defin}[thm]{Definition}
\newtheorem{nota}[thm]{Notation}
\newtheorem{notarem}[thm]{Notation and Remarks}
\newtheorem{example}[thm]{Example}
\newtheorem{remark}[thm]{Remark}
\newtheorem*{remark*}{Remark}
\newtheorem*{remarks*}{Remarks}
\newtheorem{main}[thm]{Main Theorem}
\newtheorem{open}[thm]{Open Question}
\newcommand{\Aut}{\operatorname{Aut}}
\newcommand{\mytrace}{\operatorname{trace}}
\newcommand{\Ric}{\operatorname{Ric}}
\newcommand{\nilrad}{\operatorname{Nilrad}}
\newcommand{\rad}{\operatorname{Rad}}
\newcommand{\Der}{\operatorname{Der}}
\newcommand{\ad}{\operatorname{ad}}
\newcommand{\Ad}{\operatorname{Ad}}
\newcommand{\Id}{\operatorname{Id}}
\newcommand{\Inn}{\operatorname{Inn}}
\newcommand{\SL}{\operatorname{SL}}
\newcommand{\s}{\mathfrak{s}}
\newcommand{\g}{\mathfrak{g}}
\newcommand{\n}{\mathfrak{n}}
\newcommand{\rf}{\mathfrak{r}}
\newcommand{\lf}{\mathfrak{l}}
\newcommand{\ff}{\mathfrak{f}}
\newcommand{\mf}{\mathfrak{m}}
\newcommand{\h}{\mathfrak{h}}
\newcommand{\kf}{\mathfrak{k}}
\newcommand{\p}{\mathfrak{p}}
\newcommand{\m}{\mathfrak{m}}
\newcommand{\uf}{\mathfrak{u}}
\newcommand{\vf}{\mathfrak{v}}
\newcommand{\w}{\mathfrak{w}}
\newcommand{\af}{\mathfrak{a}}
\newcommand{\cf}{\mathfrak{c}}
\newcommand{\slf}{\mathfrak{sl}}
\newcommand{\tK}{\widetilde{K}}
\newcommand{\tG}{\widetilde{G}}
\newcommand{\tf}{\mathfrak{t}}
\newcommand{\bfrak}{\mathfrak{b}}
\newcommand{\R}{\mathbf{R}}
\newcommand{\Z}{\mathbf{Z}}
\newcommand{\C}{\mathbf{C}}
\newcommand{\D}{\displaystyle}
\newcommand{\Rcal}{{\mathcal R}}
\newcommand{\Scal}{{\mathcal S}}
\newcommand{\Lc}{{\mathcal L}}
\newcommand{\Isom}{{\operatorname{Isom}}}
\newcommand{\cisom}{\mathfrak{isom}}
\newcommand{\tL}{{\widetilde{L}}}
\newcommand{\adg}{\ad_\g}
\newcommand{\orth}{{\operatorname{orth}}}
\newcommand{\sk}{{\operatorname{skew}}}
\newcommand{\M}{\mathcal{M}}
\newcommand{\Adop}{{\operatorname{Ad}}}
\newcommand{\B}{\mathcal{B}}
\newcommand{\SO}{\rm{SO}}
\newcommand{\Diff}{{\operatorname{Diff}}}
\newcommand{\spa}{{\operatorname{span}}}
\begin{document}

\title{Infinitesimal maximal symmetry and Ricci soliton solvmanifolds}

\author[C.S. Gordon]{Carolyn S. Gordon}
\address{Department of Mathematics, Dartmouth College, Hanover, NH 03755-1808, USA}
\email{carolyn.s.gordon@dartmouth.edu}

\author[M.R. Jablonski]{Michael R. Jablonski$^2$}
\address{Department of Mathematics, University of Oklahoma, Norman, OK 73019-3103, USA}
\email{mjablonski@math.ou.edu}
\thanks{$^2$Research partially supported by National Science Foundation
grant DMS-1906351}

\maketitle

\begin{abstract}

This work addresses the questions:  (i)  Among all left-invariant Riemannian metrics on a given Lie group, is there any whose isometry group  or isometry algebra  contain that of all others?  (ii) Do expanding left-invariant Ricci solitons exhibit such maximal symmetry?  Question (i) is addressed both for semisimple and for solvable Lie groups.     Building on previous work of the authors on Einstein metrics, a complete answer is given to (ii):  expanding homogeneous Ricci solitons have maximal isometry algebras although not always maximal isometry groups.  

As a consequence of the tools developed to address these questions,  partial results of B\"ohm, Lafuente, and Lauret are extended to show that left-invariant Ricci solitons on solvable Lie groups are unique up to scaling and isometry.

\end{abstract}


\tableofcontents

\section{Introduction}

What are the ``nicest'' left-invariant  Riemannian metrics on a given connected Lie group?   Some metrics may stand out due to their special curvature properties; others may be notable for their symmetry properties.     One may ask:  

\begin{itemize}
\item[(Q1)] On a given Lie group $G$, do there exist metrics that maximize symmetry in some sense?  
\item[(Q2)] Do the metrics with the ``nicest'' curvature properties have maximal symmetry?  
\end{itemize}

The current paper is a sequel to the article \cite{GordonJablonski:EinsteinSolvmanifoldsHaveMaximalSymmetry}, which introduced a notion of maximally symmetric Riemannian metric, obtained both positive and negative results on (Q1), and answered (Q2) affirmatively for Einstein metrics on solvable Lie groups but negatively for more general Ricci solitons on solvable Lie groups.    Motivated by the negative results in  \cite{GordonJablonski:EinsteinSolvmanifoldsHaveMaximalSymmetry} and by interest in the symmetry properties of Ricci solitons, we will introduce weaker notions of maximal symmetry that yield much stronger affirmative answers to (Q1) and (Q2).

\subsection{Previous work.} The article \cite{GordonJablonski:EinsteinSolvmanifoldsHaveMaximalSymmetry} introduced the strongest possible notion of \emph{maximally symmetric} left-invariant Riemannian metric:
 
 \begin{defin}\label{prelimdef} A left-invariant Riemannian metric $g$ on a Lie group $G$ is said to be \emph{maximally symmetric} if its isometry group $\Isom(G,g)<\Diff(G)$ contains that of every other left-invariant metric on $G$ (up to conjugation in $\Diff(G)$ by an element of $\Aut(G$)).
 
\end{defin}
 
Concerning question (Q1), we found, for example, that all semisimple Lie groups of non-compact type, all compact simple Lie groups, and all simply-connected completely solvable unimodular Lie groups admit left-invariant metrics of maximal symmetry.   In contrast, we showed that some Lie groups do not admit maximally symmetric left-invariant metrics.   Counterexamples even include some simply-connected compact semisimple (but not simple) Lie groups, even though such Lie groups admit bi-invariant metrics that give them the structure of a Riemannian symmetric space. 

Concerning question (Q2), we proved:

\begin{thm}\label{oldmain} \cite{GordonJablonski:EinsteinSolvmanifoldsHaveMaximalSymmetry} Every left-invariant Einstein metric on a simply-connected solvable Lie group is maximally symmetric.
\end{thm}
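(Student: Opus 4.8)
The plan is to reduce the assertion to a statement about isotropy subgroups inside $\Aut(S)$, and then to exploit the special algebraic structure of Einstein solvmanifolds. Write $S$ for the simply-connected solvable Lie group and $\s$ for its Lie algebra. The first step is to invoke the Gordon--Wilson description of isometry groups of solvmanifolds: after conjugating by an automorphism so as to place each left-invariant metric ``in standard position,'' the isotropy subgroup at the identity coset is realized as the compact group $\Aut(S)\cap O(\s, g_e)$ of orthogonal automorphisms. In these terms Definition~\ref{prelimdef} becomes the requirement that, for every left-invariant metric $g'$, the full isometry group $\Isom(S,g')$ be carried inside $\Isom(S,g)$ by some element of $\Aut(S)$; and the heart of the matter is to show that the isotropy of the Einstein metric contains an $\Aut(S)$-conjugate of the isotropy of every competing $g'$.

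The second step uses the structure theory of Einstein solvmanifolds. By Lauret's standardness theorem the metric Lie algebra splits orthogonally as $\s = \af \oplus \n$ with $\n$ the nilradical, $\af$ abelian, and each $\ad_A$ symmetric for $A\in\af$; moreover the induced metric on $\n$ is a nilsoliton. The essential algebraic input is the geometric-invariant-theory characterization of nilsolitons: the Lie bracket, viewed as a point of the $GL(\n)$-representation $\Lambda^2\n^*\otimes\n$, is a minimal vector, i.e. a critical point of the norm and a zero of the associated moment map. A standard fact about minimal vectors is that their stabilizer is stable under the Cartan involution determined by $g_e$, which here says precisely that $\Aut(N)\cap O(\n, g_e)$ is a \emph{maximal} compact subgroup of $\Aut(N)$.

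The third step converts this maximality into maximal symmetry. Since every compact subgroup of a linear algebraic group is conjugate into any fixed maximal compact subgroup, the orthogonal-automorphism group of any competing metric is conjugate, by an automorphism, into that of the Einstein metric; combined with Wilson's theorem $\Isom(N, g') = N\rtimes(\Aut(N)\cap O(g'))$ on the nilradical, this already yields maximal symmetry one level down. One then propagates the statement up to $S = \af\ltimes N$ by checking that the symmetric action of $\af$ is compatible with the comparison, so that the Einstein isotropy inside the full automorphism group $\Aut(S)$ remains maximal compact and absorbs the isotropy of $g'$ after an automorphism.

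The main obstacle is that the isometry group of a solvmanifold is genuinely larger than the naive semidirect product $S\rtimes(\Aut(S)\cap O(\s,g_e))$: the Gordon--Wilson theory shows that $\Isom_0(S,g)$ may involve a nontrivial modification of the transitive solvable subgroup together with a flat de Rham factor and additional non-compact directions. The delicate part of the argument is therefore not the compact isotropy itself but showing that all of these extra isometries present for a competing metric $g'$ are also realized, up to the allowed $\Aut(S)$-conjugation, inside $\Isom(S,g)$. I expect this to be handled by showing that these auxiliary structures are themselves governed by the orthogonal-automorphism group sitting inside $\Aut(S)$, so that the maximal-compact conclusion of the second step controls the entire isometry group and not merely its isotropy.
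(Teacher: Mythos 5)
First, a point of orientation: the present paper does not prove Theorem~\ref{oldmain}; it quotes it from \cite{GordonJablonski:EinsteinSolvmanifoldsHaveMaximalSymmetry}, so your proposal must be measured against the argument there, whose ingredients are visible in Sections 2, 4 and 5 of this paper. You correctly identify two genuine inputs of that proof: the GIT fact that the nilsoliton bracket is a minimal vector, so that $\Aut(N)\cap O(\n,g_e)$ is a maximal compact subgroup of $\Aut(N)$ (and, one level up, $\Aut_\orth(S,g)$ is maximal compact in $\Aut(S)$, which is Lemma 4.5 here), together with the conjugacy of compact subgroups of the algebraic group $\Aut(S)$ into a fixed maximal compact. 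If every left-invariant metric $g'$ on $S$ satisfied $\Isom(S,g') < S\rtimes\Aut(S)$, this would finish the proof; that is essentially how Proposition~\ref{prop.normal} and Corollary~\ref{cor.pos} dispose of the completely solvable unimodular case.

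However, your first step is false for the groups the theorem is actually about, and the resulting gap is not a technicality that can be deferred. A non-flat Einstein solvmanifold $S=A\ltimes N$ is never unimodular, and for such $S$ the isotropy group $L$ of a left-invariant metric --- including the Einstein metric itself --- is in general \emph{not} the group of orthogonal automorphisms: by Proposition~\ref{stdpos} it contains the compact semisimple factor $K_1$ of the full Iwasawa decomposition $G_1=KA_1N_1$ of the Levi factor of $\Isom_0$, and those isometries need not normalize $S$. (Real or complex hyperbolic space written as $AN$ already exhibits this: the isotropy $O(n)$ or $U(n)$ is far larger than $\Aut_\orth(S,g)$.) Consequently the reduction ``maximal symmetry $\Leftrightarrow$ the orthogonal-automorphism group of $g'$ is conjugate in $\Aut(S)$ into that of $g$'' is not valid, and the maximal-compactness of $\Aut_\orth(S,g)$ controls only the part of $\Isom(S,g')$ that normalizes $S$. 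Your closing paragraph names exactly this obstacle and then asserts that you ``expect'' the extra isometries to be governed by the orthogonal-automorphism group --- but that expectation \emph{is} the theorem, and it is where essentially all of the work in \cite{GordonJablonski:EinsteinSolvmanifoldsHaveMaximalSymmetry} is concentrated: one must take the full group $\Isom(S,g')$ with its Levi and Iwasawa decompositions and isotropy as in Proposition~\ref{stdpos}, prove that the Einstein (pre-Einstein) derivation is compatible with this entire group (the analogue of the extension result of Section 5 of the present paper), and use that to construct an isometric action of $\Isom(S,g')$ on the Einstein metric. As written, your argument establishes the theorem only when all competing isometry groups have the form $S\rtimes(\mathrm{compact})$, and leaves the genuinely non-unimodular Einstein case open.
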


The proof of Theorem~\ref{oldmain} involved a blend of Lie group structure theory and geometric invariant theory.    Shortly after,  C. B\"ohm and R. Lafuente \cite{Bohm-Lafuente:TheRicciFlowOnSolvmanifoldsOfRealType} showed that left-invariant Einstein metrics $g_0$ on solvable Lie groups are stable fixed points (up to automorphism) of the scalar-curvature normalized Ricci flow; i.e., the flow starting from any left-invariant metric smoothly converges to $g_0$ up to pullback by an automorphism.  As a corollary, they obtained a new proof of Theorem~\ref{oldmain}.  

In 1975, Alekseevskii conjectured that every homogeneous Einstein manifold of non-positive Ricci curvature is diffeomorphic to Euclidean space; an a priori slightly stronger version of the statement was that every such manifold is isometric to a simply-connected solvmanifold, i.e., a simply-connected solvable Lie group with a left-invariant metric.      Motivated in part by this conjecture, the past twenty-five years has seen the development of a beautiful structure theory of Einstein solvmanifolds; see e.g., \cite{Heber}, \cite{LauretStandard,
LauretLafuente:StructureOfHomogeneousRicciSolitonsAndTheAlekseevskiiConjecture,
JP:TowardsTheAlekseevskiiConjecture,
Arroyo-Lafuente:TheAlekseevskiiConjectureInLowDimensions}.  In 2018, C. B\"ohm and R. Lafuente \cite{Bohm-Lafuente:HomogeneousEinsteinMetricsOnEuclideanSpacesAreEinsteinSolvmanifolds} proved that the Alekseevskii conjecture would indeed imply that every homogeneous Einstein manifold of non-positive Ricci curvature is a solvmanifold and, in a major breakthrough  \cite{BL2021} a few years later, they proved the full Alekseevskii conjecture.  See \cite{Jablo:SurveyHomogeneousEinsteinManifolds}, by the second author, for a recent survey on homogeneous Einstein spaces.

Einstein metrics are stable under the Ricci flow.  More generally, Ricci solitons are metrics that evolve only by pullback by diffeomorphisms and by homothety under the Ricci flow.    The study of expanding homogeneous Ricci solitons has seen great advances mirroring the advances on Einstein solvmanifolds, see  \cite{Jablo:HomogeneousRicciSolitonsAreAlgebraic}.   Putting together the two works \cite{Jablo:HomogeneousRicciSolitonsAreAlgebraic} and  \cite{LauretLafuente:StructureOfHomogeneousRicciSolitonsAndTheAlekseevskiiConjecture}, one sees that the Alekseevskii conjecture implies that every expanding homogeneous Ricci soliton is also a solvmanifold.

A counterexample in \cite{GordonJablonski:EinsteinSolvmanifoldsHaveMaximalSymmetry} shows that Theorem~\ref{oldmain} cannot be extended to the class of left-invariant Ricci solitons on simply-connected solvable Lie groups, or even to so-called ``solvsolitons''.   \emph{Solvsolitons} are a special class of left-invariant Ricci solitons on solvable Lie groups, defined by the condition that they evolve only by pullback by \emph{Lie group automorphisms} (as opposed to more general diffeomorphisms) and homotheties under the Ricci flow.  However, the Ricci soliton in the counterexample did exhibit a weaker symmetry property that motivated the results of the current paper.

\subsection{New results.}  

We introduce two successively weaker symmetry conditions, almost maximal symmetry and infinitesimal maximal symmetry,    that exist on successively larger classes of Lie groups.   The first is identical to the notion of maximal symmetry except that only the identity component of the isometry group is considered.    The second considers only the isometry algebras.

 \begin{defin} Let $g$ be a left-invariant metric on a connected Lie group $G$.
 \begin{enumerate}
 \item $g$ is said to be \emph{almost maximally symmetric} if the identity component $\Isom_0(G,g)$ of its isometry group contains that of every other left-invariant metric on $G$ (up to conjugation in $\Diff(G)$ by an element of $\Aut(G$)).
 
 \item $g$ is said to be \emph{infinitesimally maximally symmetric} if the Lie algebra of its isometry group contains an isomorphic copy of that of every other left-invariant metric.
 \end{enumerate}
 
\end{defin}

\subsubsection{Addressing Question (Q1).}

 We first show that each successive notion -- maximal, almost maximal, and infinitesimal maximal symmetry -- is indeed weaker than the previous one.    For example, all left-invariant metrics on 2-dimensional tori are almost maximally symmetric but none are maximally symmetric.    Bi-invariant Riemannian metrics on compact semisimple Lie groups are always infinitesimally maximally symmetric but not always almost maximally symmetric.   On the other hand, we show that when a connected Lie group $G$ admits at least one almost maximally symmetric metric, then \emph{every} infinitesimally maximally symmetric metric on $G$ is necessarily almost maximally symmetric as well.  
           
     We focus especially on the setting of simply-connected solvable Lie groups.   Motivated by results of \cite{GordonWilson:IsomGrpsOfRiemSolv,Jablo:MaximalSymmetryAndUnimodularSolvmanifolds}, we introduce in Theorem ~\ref{thm: equiv classes} an equivalence relation on the class of all simply-connected solvable Lie groups.  Even though the equivalence relation is defined purely in terms of the structure of the associated Lie algebras independently of any metric, it has significant geometric implications.  The following theorem is a compendium of Theorems ~\ref{thm: equiv classes}, \ref{thm.R admits max sym} and \ref{thm: solitons and equiv classes}:  (Recall that a Lie group $S$ is \emph{completely solvable} if for every $X$ in its Lie algebra, all eigenvalues of $\ad(X)$ are real.)

       \begin{thm}\label{compendium} There exists an equivalence relation on the collection of all simply-connected solvable Lie groups, defined explicitly in Theorem~\ref{thm: equiv classes} by a purely Lie algebraic condition, which satisfies the following: 
              \begin{enumerate}
     \item Each equivalence class contains a unique (up to isomorphism) completely solvable Lie group.
     \item Two simply-connected solvable Lie groups $R_1$ and $R_2$ are equivalent if and only if there exists left-invariant metrics $g_1$ on $R_1$ and $g_2$ on $R_2$ such that $(R_1,g_1)$ is isometric to $(R_2,g_2)$.  
     \item If a simply-connected solvable Lie group $R$ admits a left-invariant Ricci soliton $g$, then every Lie group in the equivalence class of $R$ admits a left-invariant Ricci soliton isometric to $g$.
     \item If a completely solvable simply-connected Lie group $S$ admits an infinitesimally maximally symmetric metric $g_0$, then every Lie group in the equivalence class of $S$ admits an infinitesimally maximally symmetric metric isometric to $g_0$.  
     \end{enumerate}   
         \end{thm}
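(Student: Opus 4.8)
The plan is to base everything on one structural device — a ``skew-symmetric modification'' of the bracket — and to observe that parts (3) and (4) then follow essentially formally from part (2), so that the genuine work lies in (1) and (2). First I would make the equivalence relation concrete at the Lie-algebra level. Given a simply-connected solvable $R$ with Lie algebra $\rf$, take the Jordan decomposition of each $\ad(X)$ and split its semisimple part into a real-eigenvalue piece $R(X)$ and a purely imaginary piece $I(X)$. The map $X \mapsto I(X)$ takes values in a compactly embedded torus of derivations and satisfies the cocycle-type identity needed for the modified bracket $\ad_\sim(X) := \ad(X) - I(X)$ to obey the Jacobi identity; the resulting algebra has all $\ad$-operators with real eigenvalues. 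I then declare $R_1 \sim R_2$ precisely when, after identifying (isomorphic) nilradicals, their brackets agree on the real action and differ only through such imaginary parts $I(\cdot)$ — the condition recorded in Theorem~\ref{thm: equiv classes}. This is manifestly metric-free.

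For part (2), the forward direction is constructive. Fix a left-invariant metric on $R_1$ and pass to the metric solvable Lie algebra $(\rf_1,\ip{\cdot,\cdot})$, in which each $I(X)$ is skew-symmetric. I would exhibit a single ambient homogeneous space — built from the common nilradical together with the full abelian action, the imaginary parts entering as a compact torus of isotropy isometries — inside whose isometry group both $R_1$ and the modified group $R_2$ sit as simply transitive subgroups. They then present the very same Riemannian manifold, and the construction is a bijection between left-invariant metrics on $R_1$ and on $R_2$ that preserves isometry type. The backward direction is the crux and the main obstacle: from an abstract isometry $(R_1,g_1) \to (R_2,g_2)$ — which need not be a group homomorphism — one must recover the Lie-algebraic relation. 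Here I would invoke the Gordon--Wilson structure theory of isometry groups of solvmanifolds \cite{GordonWilson:IsomGrpsOfRiemSolv}: one passes to the full isometry group, locates the simply transitive solvable subgroups realizing $R_1$ and $R_2$, and shows their Lie algebras share a common nilradical and a common real action, differing only through the compact imaginary part. Making this identification canonical, independent of the chosen simply transitive subgroup, is the delicate point.

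For part (1), existence is immediate: the distinguished representative is the $I \equiv 0$ modification, which is completely solvable by construction and equivalent to $R$ by definition. Uniqueness is where rigidity enters. If two completely solvable groups $S_1,S_2$ are equivalent, part (2) supplies left-invariant metrics making them isometric; I would then appeal to the rigidity of isometric completely solvable solvmanifolds (the Gordon--Wilson uniqueness already exploited in \cite{GordonJablonski:EinsteinSolvmanifoldsHaveMaximalSymmetry}), namely that a simply transitive completely solvable group of isometries is essentially unique, which forces $S_1 \cong S_2$ as Lie groups.

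Finally, parts (3) and (4) are harvested from the isometry-preserving bijection of part (2). For (3), the soliton condition $\Ric + \tfrac12 \Lc_X g = \lambda g$ asks only for the existence of some $X$ and $\lambda$, hence is an invariant of the underlying Riemannian metric; transporting the soliton $g$ across the modification yields an isometric left-invariant metric on each equivalent group, which is again a soliton. For (4) I would chain isometry algebras: writing $S$ for the completely solvable model and $g_0'$ for the metric on $R$ isometric to $g_0$, any left-invariant $h$ on $R$ corresponds under the bijection to a left-invariant $h'$ on $S$ with $\cisom(R,h)\cong\cisom(S,h')$; since $g_0$ is infinitesimally maximally symmetric on $S$, $\cisom(S,g_0)$ contains a copy of $\cisom(S,h')$, and because $g_0'$ is isometric to $g_0$ we obtain $\cisom(R,g_0')\cong\cisom(S,g_0)\supseteq\cisom(S,h')\cong\cisom(R,h)$, establishing that $g_0'$ is infinitesimally maximally symmetric on $R$.
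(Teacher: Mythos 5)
Your treatment of parts (1) and (2) follows essentially the same route as the paper: the equivalence relation via the Jordan-decomposition map $X\mapsto \ad(X)-\ad(X)_s^{i\R}$, the forward direction of (2) by realizing both groups as simply transitive subgroups of a common isometry group $S\rtimes U$, and the backward direction via the Gordon--Wilson modification theory. The genuine gap lies in the claim on which you rest parts (3) and (4): that the modification sets up ``a bijection between left-invariant metrics on $R_1$ and on $R_2$ that preserves isometry type.'' No such bijection exists. Statement (2) only produces \emph{some} pair of isometric metrics; a given left-invariant metric $h$ on $R$ need not be isometric to any left-invariant metric on $S=\sigma(R)$. For example, $\widetilde{SO(2)}\ltimes\R^2$ has $\sigma(R)\cong\R^3$, but it carries non-flat left-invariant metrics, while every left-invariant metric on $\R^3$ is flat. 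The obstruction is that the modification map $\varphi(X)=-\ad(X)_s^{i\R}$ takes values in $\Der(\rf)$ but these derivations need not be skew-symmetric for an arbitrary metric, so $S$ need not act by isometries on $(R,h)$.

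Consequently your argument for (4) breaks at the step ``$h$ on $R$ corresponds to $h'$ on $S$ with $\cisom(R,h)\cong\cisom(S,h')$.'' The paper replaces this with Proposition~\ref{prop:  reduction to compl solv solvsol}: for each $h\in\Lc(R)$ there is a possibly \emph{different} metric $h''$ with $\Isom_0(R,h)<\Isom_0(R,h'')$ and $h''$ isometric to a left-invariant metric on $S$; proving this occupies most of Section 3.3 (one must enlarge the isometry group by a carefully chosen compact group of automorphisms fixing the isotropy, Lemmas~\ref{D}--\ref{finallemma}) and is the real content of statement (4). Part (3) has a parallel but smaller gap: to ``transport'' the specific soliton $g$ to every $R'$ in the class you need $R'$ to embed in $\Isom(R,g)$ itself, not merely in the isometry group of some metric. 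This requires first that $(R,g)$ is isometric to a completely solvable solvsoliton $(S,g_0)$ (Proposition~\ref{solvsol uniq}) and then that $\Aut_\orth(S,g_0)$ is a \emph{maximal} compact subgroup of $\Aut(S)$ (Lemma~\ref{prop: compl solv soliton invariant under compact aut}), so that every abstract modification of $S$ can be conjugated into $\Isom(S,g_0)$. Neither ingredient appears in your proposal, and without them the transport step is unjustified.
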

 The first three statements are interpretations or elementary consequences of the literature on the structure of Riemannian solvmanifolds and on Ricci solitons.   
 
Example~\ref{R not admit almost max} shows that the final statement fails if we replace infinitesimal maximal symmetry by almost maximal symmetry; i.e., if the completely solvable Lie group $S$ admits an almost maximally symmetric metric $g_0$, the Lie groups in the equivalence class will admit infinitesimally maximally symmetric, but in general not maximally symmetric, metrics isometric to $g_0$.

 In the special case of completely solvable Lie groups $S$ with trivial center, we show in Proposition~\ref{comp solv inf implies almost} that \emph{every} infinitesimally maximally symmetric metric is necessarily almost maximally symmetric.

While the results of this paper show that not all Lie groups admit metrics of almost maximal symmetry, the following question appears challenging:

\begin{open} Does every Lie group admit an infinitesimally maximally symmetric metric?
\end{open}
    We are continuing to investigate this question.  For some progress, see  \cite{EpsteinJablonski:MaximalSymmetryAndSolvmanifolds} by J. Epstein and the second author.
    
\subsubsection{Application to uniqueness questions for Ricci solitons}\label{applics}

Several authors have addressed questions of uniqueness of left-invariant Ricci solitons on simply-connected solvable Lie groups $S$:

\begin{itemize} \item[$\bullet$] [Lauret  \cite{Lauret:SolSolitons}] A simply-connected solvable Lie group can admit at most one solvsoliton up to homothety and isometry.
\item[$\bullet$][Jablonski \cite{Jablo:HomogeneousRicciSolitons}] Every left-invariant Ricci soliton on a simply-connected \emph{completely solvable} Lie group is necessarily a solvsoliton.  (Thus there can exist at most one soliton up to homothety and isometry.)
\item[$\bullet$] [B\"ohm and R. Lafuente \cite{Bohm-Lafuente:TheRicciFlowOnSolvmanifoldsOfRealType}] If an arbitrary simply-connected solvable Lie group $S$ admits a non-flat solvsoliton $g$, then $g$ is the unique left-invariant Ricci soliton on $S$ up to homothety and pullback by automorphisms. 
\end{itemize} 

While the final result above strengthened the first, it left open the question of uniqueness of left-invariant Ricci solitons on arbitrary simply-connected solvable Lie groups  (in particular, those solvable Lie groups that admit left-invariant Ricci solitons but not solvsolitons).  As an immediate consequence of Theorem~\ref{compendium} and the uniqueness results above for completely solvable Lie groups, we have:

\begin{cor}\label{cor.uniqueness}  A simply-connected solvable Lie group can admit at most one left-invariant Ricci soliton up to homothety and isometry.
\end{cor}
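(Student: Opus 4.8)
The plan is to deduce Corollary~\ref{cor.uniqueness} by reducing the general case of an arbitrary simply-connected solvable Lie group to the already-settled completely solvable case. The key structural tool is Theorem~\ref{compendium}: every simply-connected solvable Lie group $R$ lies in an equivalence class that, by part~(1), contains a \emph{unique} completely solvable representative $S$ (up to isomorphism). So the strategy is to transport any left-invariant Ricci soliton on $R$ over to $S$, invoke the known uniqueness on $S$, and transport back.

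Concretely, suppose $R$ admits left-invariant Ricci solitons $g$ and $g'$. First I would observe that $R$ is equivalent to its completely solvable representative $S$, so by part~(2) of Theorem~\ref{compendium} there are left-invariant metrics realizing an isometry $(R,g)\cong(S,\tilde g)$; more to the point, I want to know that the \emph{soliton} structure survives the transport. Here I would apply part~(3) of Theorem~\ref{compendium}, which says precisely that if $R$ admits a left-invariant Ricci soliton $g$, then every Lie group in the equivalence class of $R$ — in particular the completely solvable $S$ — admits a left-invariant Ricci soliton \emph{isometric to} $g$. Applying this to each of $g$ and $g'$ yields left-invariant Ricci solitons $\tilde g$ and $\tilde g'$ on $S$ with $(S,\tilde g)\cong(R,g)$ and $(S,\tilde g')\cong(R,g')$.

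Next I would invoke the uniqueness results cited in Section~\ref{applics}. Since $S$ is completely solvable, Jablonski's result \cite{Jablo:HomogeneousRicciSolitons} guarantees that every left-invariant Ricci soliton on $S$ is in fact a solvsoliton; and Lauret's result \cite{Lauret:SolSolitons} (or equivalently the B\"ohm--Lafuente result \cite{Bohm-Lafuente:TheRicciFlowOnSolvmanifoldsOfRealType} in the non-flat case) gives uniqueness of the solvsoliton up to homothety and isometry. Hence $\tilde g$ and $\tilde g'$ agree up to homothety and isometry on $S$. Composing the isometries from the previous step with this identification then forces $g$ and $g'$ to agree up to homothety and isometry on $R$, which is the desired conclusion.

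The only genuine subtlety — and the step I would scrutinize most carefully — is whether the transport in part~(3) preserves exactly the equivalence relation ``up to homothety and isometry'' that appears in the statement, rather than some coarser or finer notion. Part~(3) delivers a soliton on $S$ \emph{isometric} to the one on $R$ (not merely homothetic), so no scaling slack is introduced in the transport; the only homothety freedom comes from the uniqueness statement on $S$ itself, which is precisely the freedom allowed in the corollary. I would therefore need to check that ``isometric on $R$'' $\Rightarrow$ ``isometric on $S$'' $\Rightarrow$ (by uniqueness) ``homothetic-and-isometric on $S$'' $\Rightarrow$ ``homothetic-and-isometric on $R$'' composes cleanly, with no loss. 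Granting Theorem~\ref{compendium} and the cited uniqueness theorems, this is essentially a bookkeeping argument, so the corollary is indeed \emph{immediate} as the text asserts; the real content has already been packaged into Theorem~\ref{compendium}(1)--(3).
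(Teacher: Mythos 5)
Your proposal is correct and follows exactly the route the paper intends: transport both solitons to the unique completely solvable representative $S$ via Theorem~\ref{compendium}(3) (i.e., Theorem~\ref{thm: solitons and equiv classes}), apply the completely solvable uniqueness results of Lauret and Jablonski quoted in Section~\ref{applics}, and compose the isometries back on $R$. The paper treats this as immediate for the same reason you identify, namely that part~(3) transports the soliton isometrically with no scaling slack, so nothing further is needed.
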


We emphasize that the uniqueness property ``up to isometry'' in the corollary is weaker than ``up to pullback by automorphisms" in general.     The conclusion of the corollary is sharp however; in Example~\ref{R not admit almost max} we construct a simply-connected solvable Lie group $S$ that admits two isometric left-invariant Ricci solitons that are not related by an automorphism of $S$.     

\subsubsection{Addressing Question (Q2) for Ricci solitons.}    

We prove:  

\begin{main}\label{main}~
\begin{enumerate}
\item Every left-invariant Ricci soliton metric $g$ on a simply-connected solvable Lie group $S$ is infinitesimally maximally symmetric.
\item If, moreover, $S$ is completely solvable with trivial center, then $g$ is almost maximally symmetric.
\end{enumerate}
\end{main}

We emphasize that the theorem applies to \emph{all} left-invariant Ricci solitons, not just to solvsolitons.        
 
     Proposition~\ref{comp solv inf implies almost}, quoted above, shows that the second statement of the main theorem is a consequence of the first one.  
 
 The key tools we use to prove the first statement are the following:  By Theorem~\ref{compendium}, it suffices to consider the case that $S$ is completely solvable, in which case (by the results cited in~\ref{applics} above) the Ricci soliton $g$ is necessarily a solvsoliton and is unique up to automorphism and homothety.    Moreover, Lauret \cite{Lauret:SolSolitons} showed that $(S,g)$ admits a one-dimensional extension $(S_E,g_E)$ to an Einstein solvmanifold.  The Lie algebra $\s_E$ is the one-dimensional extension of $\s$ arising from the so-called \emph{pre-Einstein derivation} of $\s$ (more precisely, of the nilradical of $\s$) introduced by Y. Nikolayevsky \cite{Nikolayevsky:EinsteinSolvmanifoldsandPreEinsteinDerivation}.

We use these tools to reduce Theorem~\ref{main} to Theorem~\ref{oldmain} as follows:  Letting $\g$ be the isometry algebra of an arbitrary left-invariant metric on $S$, the key step in the proof is to show that the pre-Einstein derivation of the nilradical of $\s$ extends to a derivation of $\g$.    This allows us to construct a one-dimensional extension $\g'$ of $\g$ and a corresponding linear Lie group $G'$ that acts transitively on the extension $S_E$ of $S$.   We apply Theorem~\ref{oldmain} to conclude that $G'$ preserves the (unique up to automorphism) Einstein metric $g_E$, and the subgroup with Lie algebra $\g$ preserves (the pullback by an automorphism of) the Ricci soliton $g$.   Thus $\g$ is isomorphic to a subalgebra of the isometry algebra of $(S, g)$ and the theorem follows.

As a consequence of the proof of the main theorem above, we have the following result of independent interest.  

\begin{cor} Let $S$ be a completely solvable Lie group with solvsoliton metric $g$.  Let $(S_E, g_E)$ be the rank-1 Einstein extension, as above.  Then $\Isom_0(S,g)$ is precisely the connected group of isometries of $(S_E, g_E)$ which preserve $S$.
\end{cor}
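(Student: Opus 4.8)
The plan is to place both groups inside $\Isom_0(S_E,g_E)$ and then to match them up by restriction to $S$. First I would specialize the construction used in the proof of Theorem~\ref{main} to the solvsoliton metric $g$ itself. Writing $\g=\cisom(S,g)$ for its isometry algebra, that proof yields a one-dimensional extension $\g'=\g\oplus\R\xi$, in which $\xi$ generates the pre-Einstein flow, together with a connected linear Lie group $G'$ of Lie algebra $\g'$ that acts transitively on $S_E$ and, by Theorem~\ref{oldmain}, preserves $g_E$; thus $G'\le\Isom_0(S_E,g_E)$. Let $G\le G'$ be the connected subgroup with Lie algebra $\g$.

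Next I would verify the forward inclusion. Since $S$ is a subgroup of $S_E$, left translations by $S$ preserve $S$, and the isotropy subgroup of $(S,g)$ at $e$ preserves $\s\subset\s_E$ and hence $S$; as these generate $G$, the group $G$ preserves $S$ setwise. Writing $H$ for the identity component of $\{\phi\in\Isom_0(S_E,g_E):\phi(S)=S\}$, we therefore have $G\le H$. It is worth recording here that the differential of the operation ``restrict a Killing field to $S$'' carries the Killing field on $S_E$ determined by $X\in\g$ back to the Killing field of $(S,g)$ it came from, so this differential is the identity of $\g$.

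For the reverse inclusion I would use the fact that the extension is orthogonal, so $g_E$ restricts to $g$ on $S$. Hence every $\phi\in H$ restricts to a diffeomorphism $\phi|_S$ preserving the induced metric $g$, giving a homomorphism $\rho\colon H\to\Isom(S,g)$; since $H$ is connected, $\rho(H)\le\Isom_0(S,g)$. The kernel of $\rho$ consists of isometries of $(S_E,g_E)$ fixing the hypersurface $S$ pointwise; such an isometry is the identity or the reflection across $S$, and only the identity lies in the connected group $H$, so $\rho$ is injective. By the previous paragraph $d\rho$ is the identity on $\g$, so $\rho|_G$ is a local isomorphism and $\rho(G)=\Isom_0(S,g)$. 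Combining, $\Isom_0(S,g)=\rho(G)\le\rho(H)\le\Isom_0(S,g)$ forces $\rho(H)=\rho(G)$, and injectivity of $\rho$ then gives $H=G$. Read through the identification $\rho$, the connected group of isometries of $(S_E,g_E)$ preserving $S$ is precisely $\Isom_0(S,g)$, as claimed.

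The main obstacle is the forward inclusion: showing that all of $\Isom_0(S,g)$ genuinely extends to isometries of $(S_E,g_E)$. This is exactly the content of the construction in the proof of Theorem~\ref{main}, and it hinges on the key step quoted there --- that the pre-Einstein derivation extends to a derivation of $\g$ --- which is what allows $\g$ to sit inside $\g'$ and hence $G$ inside $G'\le\Isom_0(S_E,g_E)$. By contrast the reverse inclusion is comparatively soft, using only $g_E|_S=g$ and the rigidity of isometries fixing a hypersurface. The one point deserving care is that no Killing field of $(S_E,g_E)$ tangent to $S$ should lie outside $\g$; this is ensured by the injectivity of $\rho$, equivalently by the fact that the single added direction $\xi$ generates the flow transverse to $S$ and so is never tangent to it.
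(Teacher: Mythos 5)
Your proposal is correct and follows the route the paper intends: the corollary is stated there without a separate proof, as a consequence of the construction in the proof of Theorem~\ref{main}, which is exactly what you invoke for the inclusion $\Isom_0(S,g)\le H$, while your restriction homomorphism $\rho$ (injective on the connected group $H$ because no element of $H$ can reverse the coorientation of the hypersurface $S$) supplies the reverse inclusion. The one step I would tighten is ``preserves $\s\subset\s_E$ and hence $S$'' --- fixing the tangent subspace at $e$ does not by itself force an isometry to preserve the submanifold; instead note that in the paper's construction $S$ is the $G$-orbit of the basepoint in $H/KT$, so $G$-invariance of $S$ is immediate, and also that the construction a priori realizes $G$ inside $\Isom_0(S,g_0)$ for a solvsoliton $g_0$ that may only be the pullback of $g$ by an automorphism, which must be absorbed before identifying $\rho(G)$ with $\Isom_0(S,g)$.
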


It would be interesting to know if  the result holds not only for the identity component  but also for the full isometry group.

\subsection*{Acknowledgements} We would like to thank Joseph Wolf for very helpful discussions.

\section{Riemannian solvmanifolds and equivalence classes of solvable Lie groups}

A connected Riemannian manifold $\M$ is said to be a \emph{Riemannian solvmanifold} if it admits a transitive solvable group of isometries.  Every Riemannian solvmanifold  admits an almost simply-transitive solvable group $R$ of isometres.  (Almost simply-transitive means transitive with discrete isotropy.)  If $\M$ is simply-connected, then every such $R$ acts simply transitively.   In this case, the  Riemannian structure induces a left-invariant metric $g$ on $R$ so that $\M\simeq (R,g)$.  

Given a connected Riemannian manifold $\M$, we denote by $\Isom(\M)$ the full isometry group of $\M$.  The identity component of $\Isom(\M)$ will be denoted $\Isom_0(\M)$, and the Lie algebra of $\Isom(\M)$ will be denoted $\cisom(\M)$.

We will always denote Lie groups by capital Roman letters and their Lie algebras by the corresponding fraktur.  For example, the Lie algebra of $R$ is denoted $\mathfrak r$.

\begin{notarem}\label{isomgrps} 

Given a connected Lie group $H$, we denote by $\Lc(H)$ the space of all left-invariant Riemannian metrics on $H$.   Note that $\Lc(H)$ corresponds to the set of all inner products on the Lie algebra $\h$.  

For $g\in \Lc(H)$, denote by $\lambda^g: H\to\Isom(H,g)$ the monomorphism $h\to L_h\in \Isom(H,g)$, where $L_h: H\to H$ is left translation.    We will usually identify the image of $\lambda^g$ with $H$ itself.  The normalizer of $H$ (i.e., of  $\lambda^g(H)$) in $\Isom(H,g)$, respectively of $\h$ (identified with $\lambda^g_*(\h))$ in $\cisom(H,g)$, is given by $H\rtimes \Aut_\orth(H)$, respectively $\h\rtimes\Der_{\sk}(\h,g
)$, where $\Aut_\orth(H)$ is the group of all automorphisms of $H$ that preserve $g$ and $\Der_{\sk}(\h,g)$ is the space of all derivations of $\h$ that are skew-symmetric relative to the inner product defined by $g$.    
\end{notarem}

\subsection{Background on Riemannian solvmanifolds and their isometry groups}

The first systematic study of isometry groups of Riemannian solvmanifolds $\M$ was carried out in \cite{GordonWilson:IsomGrpsOfRiemSolv}.  
There a notion of modification was developed that yields a simple two-step algorithm to obtain a distinguished, almost simply-transitive, solvable Lie subgroup $S$ of $\Isom(\M)$, called a subgroup in ``standard position'' in $\Isom(\M)$.  The subgroup $S$ is distinguished by the property of having maximal normalizer; more precisely, its  normalizer in $\Isom(\M)$ contains --up to conjugacy -- the normalizer of every other almost simply-transitive solvable subgroup of $G$.   In some cases, e.g., when $S$ is unimodular,  one is guaranteed that $S$ is normal in $\Isom(\M)$.  In such cases $\Isom(M)$ can be  explicitly computed.   In more general settings, one still obtains considerable information about $\Isom(\M)$ from knowledge of $S$.   We will briefly review the notions of modification and standard position.

To motivate the theory of modifications and our applications in this work, we present a simple example:

\begin{example}
Consider $M = \mathbb R^3$ with a flat metric.  The simply-connected solvable Lie group $R = \widetilde{SO(2)}\ltimes \mathbb R^2$ acts simply transitively by isometries on $M$ and thus inherits a flat left-invariant metric $g$.   The action of $\widetilde{SO(2)}$  is by rotation on the xy-coordinates as one traverses the z-axis -- a twisting action -- while $\R^2$ acts by translations parallel to the $xy$-plane.    If one begins with $(R,g)$, the standard modification algorithm described below yields the more natural simply-transitive group $\R^3$.
\end{example}

\begin{defin}\label{def.mod}\cite{GordonWilson:IsomGrpsOfRiemSolv} Let $G$ be a Lie group and let $R$ and $R'$ be solvable subgroups of $G$.    
We say that $R'$ is a \emph{modification} of $R$ in $G$ and that the corresponding Lie algebra $\rf'$ is a \emph{modification} of $\rf$ in $\g$ if there exists a linear map $\varphi:\rf\to\g$ such that
\begin{enumerate}
\item $\rf'=\{X+\varphi(X): X\in\rf\}$.
\item  The image of $\varphi$ lies in a compactly embedded subalgebra of $\g$;
\item $[\varphi(\rf),\rf]<\rf$.
\end{enumerate}
 $\varphi$ is said to be a \emph{modification map}.   The fact that $\rf'$ is solvable implies  that $\varphi(\rf)$ is abelian.

The modification is said to be \emph{normal} if $\varphi(\rf)$ normalizes $\rf'$.   Normality says that the modification is reversible, i.e.,  $\rf$ is a modification of $\rf'$, with modification map $\psi(X+\varphi(X))=-\varphi(X)$.   
\end{defin}

We note that the concept of normal modifications appeared independently in the work of Alekseevskii where they are called twists, see  \cite{Alekseevski:RiemSpacesOfNegCurv}.

\begin{prop}\label{modprops}
\cite[Lemma 4.3]{Jablo:MaximalSymmetryAndUnimodularSolvmanifolds}  Every modification of a completely solvable Lie algebra is normal.
\end{prop}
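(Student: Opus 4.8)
The plan is to unwind the definition of normality into a single linear condition on $\varphi$, and then to play the two eigenvalue constraints against each other: purely imaginary eigenvalues coming from the compact embedding, and purely real eigenvalues coming from complete solvability.

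First I would reduce to a concrete condition. Writing $\af=\varphi(\rf)$, which is abelian by the remark in Definition~\ref{def.mod}, normality means $[\af,\rf']\subseteq\rf'$. For $Y,X\in\rf$,
\[
[\varphi(Y),X+\varphi(X)]=[\varphi(Y),X]+[\varphi(Y),\varphi(X)]=[\varphi(Y),X],
\]
since $\af$ is abelian, and this lies in $\rf$ by condition (iii). Thus $[\af,\rf']\subseteq\rf$ automatically, and normality is equivalent to the single requirement that $D_a(X):=[a,X]\in\rf'$ for every $a\in\af$ and $X\in\rf$; equivalently, that each $D_a=\ad_\g(a)|_\rf$ carries $\rf$ into $\rf\cap\rf'$.

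Next I would record the structure of these operators. Because $a\in\af$ lies in a compactly embedded subalgebra, $\ad_\g(a)$ is semisimple with purely imaginary eigenvalues and is skew-symmetric for a suitable inner product on $\g$; since $\rf$ is $\ad_\g(a)$-invariant by (iii), the restriction $D_a$ is a semisimple derivation of $\rf$ with purely imaginary eigenvalues, and the family $\{D_a : a\in\af\}$ commutes because $\af$ is abelian. On the other hand, complete solvability of $\rf$ forces every $\ad_\rf(X)$ to have real eigenvalues. A first dividend: if $u\in\af\cap\rf$, then $\ad_\rf(u)=D_u$ is simultaneously semisimple with imaginary eigenvalues and, by complete solvability, has real eigenvalues, hence vanishes; so $\af\cap\rf$ is central in $\rf$ and acts trivially. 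This lets me quotient out the (central, harmlessly acting) intersection and assume $\af\cap\rf=0$, in which case $\rf\cap\rf'=\ker\varphi$ and the normality condition collapses to
\[
\varphi\big(D_a(X)\big)=0\qquad\text{for all }a\in\af,\ X\in\rf,
\]
i.e. $\varphi$ must annihilate every nonzero joint-weight space of the commuting skew operators $\{D_a\}$ on $\rf$.

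The heart of the argument—and the step I expect to be the main obstacle—is to extract this vanishing from the one remaining piece of unused information: that $\rf'$ is itself a subalgebra. Expanding $[X_1+\varphi(X_1),X_2+\varphi(X_2)]\in\rf'$ and projecting onto $\af$ along $\rf$ (legitimate once $\af\cap\rf=0$) gives the identity
\[
\varphi\big(D_{\varphi(X_1)}(X_2)-D_{\varphi(X_2)}(X_1)+[X_1,X_2]\big)=0
\]
for all $X_1,X_2\in\rf$. The plan is to feed this identity the skew-symmetry of the $D_a$ together with the reality of the $\rf$-weights, decomposing $\rf\otimes\C$ into joint weight spaces for $\af$ and testing the identity on well-chosen weight vectors, the model computation being the case $\rf$ abelian with $\af=\so(2)$, where skew-symmetry of the rotation generator already forces $\varphi=0$. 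Complete solvability enters precisely here: it guarantees that the only rotational (imaginary-spectrum) directions acting on $\rf$ are the external ones coming from $\af$, so that the skew operators $D_a$ are transverse to the real internal structure of $\rf$; this transversality is what should force $\varphi$ to kill the nonzero weight spaces and hence yield $\varphi\circ D_a=0$. Showing that the subalgebra identity together with skew-symmetry genuinely forces this vanishing in the general (non-abelian, higher-multiplicity) case is the crux; once it is in hand, $[\af,\rf']\subseteq\rf'$ and the modification is normal.
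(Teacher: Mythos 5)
The paper does not actually prove this proposition; it is quoted from \cite[Lemma 4.3]{Jablo:MaximalSymmetryAndUnimodularSolvmanifolds}, so there is no in-paper argument to compare with. Judged on its own terms, your attempt has a genuine gap: by your own description the central step is left as a plan rather than a proof. Your reductions are sound and useful --- normality is indeed equivalent to $[\af,\rf]\subseteq\rf\cap\rf'$, the operators $D_a=\ad_\g(a)|_{\rf}$ are commuting semisimple with purely imaginary spectrum, and the identity $\varphi\bigl([X_1,X_2]+D_{\varphi(X_1)}X_2-D_{\varphi(X_2)}X_1\bigr)=0$ is correctly extracted from the fact that $\rf'$ is a subalgebra. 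But the conclusion you need, that $\varphi$ annihilates every nonzero joint weight space of $\{D_a\}$, is precisely where all the work lies, and ``complete solvability makes the skew operators transverse to the real internal structure'' is a heuristic, not an argument. Even in the model case ($\rf$ abelian), extracting $\varphi\circ D_a=0$ from your identity requires a nontrivial computation pairing each weight space $\rf_\lambda$ with its conjugate $\rf_{-\lambda}$ and then bootstrapping across different weights; in the non-abelian case the extra term $\varphi([X_1,X_2])$ couples $\rf_{\sigma+\lambda}$ to $\rf_\sigma$ and $\rf_\lambda$, and it is not clear that the identity plus skew-symmetry alone closes the argument without further structural input (e.g.\ passing through $[\q,\q]$ for $\q=\rf+\af$, or the nilradical of $\q$). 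Since the proposition is false without complete solvability, any correct completion must make that hypothesis do real work beyond the one place you currently use it (killing $\af\cap\rf$).

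A secondary, fixable issue: the reduction to $\af\cap\rf=0$ by ``quotienting out'' the central intersection is not justified as stated. Knowing $[a,X]\in\rf'+(\af\cap\rf)$ does not give $[a,X]\in\rf'$ unless you first show $\af\cap\rf\subseteq\rf'$, which is not automatic from $u=\varphi(Y)$ for some $Y$ (that only puts $Y+u$ in $\rf'$). It is safer to avoid the quotient altogether and prove $[a,X]\in\rf\cap\rf'$ directly, noting that $\rf\cap\rf'=\{Y+\varphi(Y):\varphi(Y)\in\af\cap\rf\}\supseteq\ker\varphi$.
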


\begin{prop}\label{prop: modification picking up metric} Let $R$ be a simply-connected solvable Lie group, let $U$ be a compact connected subgroup of $\Aut(R)$, and let $H=R\rtimes U$.   Let $R'$ be a connected solvable subgroup of $H$. Then $R'$ is a modification of $R$ in $H$ if and only if $R'$ acts simply transitively on $H/U$.    Moreover, these conditions imply that $R'$ is simply-connected.

\end{prop}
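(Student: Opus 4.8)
The plan is to reduce both directions to the single linear-algebraic transversality condition $\h = \rf' \oplus \uf$, where $\uf$ is the Lie algebra of $U$, and then to pass between this infinitesimal condition and the global statement about the $R'$-action on $H/U$. Two preliminary observations organize everything. First, since $U \subseteq \Aut(R)$, the subalgebra $\rf$ is an ideal of $\h = \rf \rtimes \uf$, so $[\h, \rf] \subseteq \rf$; in particular condition (iii) of Definition~\ref{def.mod} is automatic here, and the projection $\pi \colon \h \to \h/\rf \cong \uf$ is a Lie algebra homomorphism. Second, because $H = R \rtimes U$ with $R \cap U = \{e\}$, the group $R$ already acts simply transitively on $H/U$, identifying $H/U$ with $R$; in particular $H/U$ is diffeomorphic to Euclidean space and is simply connected. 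For any connected solvable subgroup $R' \le H$ with Lie algebra $\rf'$, the differential at the base point of the orbit map $\alpha \colon R' \to H/U$ is the composite $\rf' \hookrightarrow \h \twoheadrightarrow \h/\uf$, whose kernel is $\rf' \cap \uf$ and whose image is $(\rf' + \uf)/\uf$. Hence the orbit is locally transitive at the base point exactly when $\h = \rf' + \uf$, and the isotropy is infinitesimally trivial exactly when $\rf' \cap \uf = 0$; the action is infinitesimally simply transitive at the base point precisely when $\h = \rf' \oplus \uf$.

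For the reverse implication I would assume $R'$ acts simply transitively on $H/U$. Then $\alpha$ is a bijective local diffeomorphism, so $\h = \rf' \oplus \uf$; in particular the projection $\rf' \to \rf$ along $\uf$ is an isomorphism. Inverting it produces a linear map $\varphi \colon \rf \to \uf$ with $\rf' = \{X + \varphi(X) : X \in \rf\}$, which is exactly (i) of Definition~\ref{def.mod}; (ii) holds because $\uf$ is compactly embedded (being the Lie algebra of the compact group $U$), and (iii) is automatic as noted. Finally $\varphi(\rf) = \pi(\rf')$ is the image of the solvable algebra $\rf'$ under the homomorphism $\pi$, hence a connected solvable subalgebra of the compact algebra $\uf$, therefore abelian; so $\varphi$ is a genuine modification map and $R'$ is a modification of $R$. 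Simple connectivity of $R'$ follows too, since $\alpha$ is then a diffeomorphism onto the simply connected $H/U$.

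For the forward implication I would start from a modification map $\varphi \colon \rf \to \h$ with image in a compactly embedded subalgebra $\mathfrak{c}$, and $\rf' = \{X + \varphi(X)\}$ the Lie algebra of $R'$. The crux is to promote this to the transversality $\rf' \cap \uf = 0$. Writing $\varphi = \varphi_\rf + \varphi_\uf$ for the components along $\rf$ and $\uf$, one checks that $\rf' \cap \uf = 0$ is equivalent to $\Id + \varphi_\rf$ being invertible on $\rf$, so the content is to rule out the eigenvalue $-1$. Here I would use that $\varphi(\rf)$ lies in a compactly embedded subalgebra, so its elements have $\ad$ semisimple with purely imaginary spectrum, together with the fact that $\rf'$ is a subalgebra (the cocycle identity satisfied by $\varphi$); invoking the conjugacy of maximal compactly embedded subalgebras of $\h$ under inner automorphisms, with the conjugation taken within $R$ (which replaces $R'$ and $U$ by conjugate subgroups and so changes neither the modification property nor the validity of simple transitivity), I would arrange $\varphi(\rf) \subseteq \uf$, at which point $X + \varphi(X) \in \uf$ forces $X = 0$ and $\rf' \cap \uf = 0$ is immediate. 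With $\h = \rf' \oplus \uf$ in hand, the orbit map $\alpha \colon R' \to H/U$ is an everywhere local diffeomorphism; since $q \colon H \to H/U$ is a proper fibration with compact fiber $U$ and $H/U$ is simply connected, a covering-space argument shows $\alpha$ is a covering map, hence a diffeomorphism onto the simply connected $H/U$. This yields simultaneously that $R'$ is simply connected and that the $R'$-action is simply transitive.

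The main obstacle is the forward transversality step: extracting $\rf' \cap \uf = 0$ from the bare modification data. The compactly embedded hypothesis on $\varphi(\rf)$ is essential, as it is exactly what prevents a nonzero $X_0$ with $\varphi_\rf(X_0) = -X_0$ and $\varphi_\uf(X_0) \neq 0$, which would otherwise place a nonzero element of $\uf$ inside $\rf'$ and destroy simple transitivity. Handling this cleanly, most plausibly through the conjugacy of compactly embedded subalgebras so that one may assume $\varphi$ takes values in $\uf$ itself, together with the care needed to upgrade the resulting local diffeomorphism to a global one using the simple connectivity of $H/U \cong R$, is where the real work of the proposition lies.
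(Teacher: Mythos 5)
Your reverse implication (simply transitive $\Rightarrow$ modification) is correct and is essentially the paper's own argument: simple transitivity forces $\h=\rf'\oplus\uf$, one inverts the projection to get $\varphi:\rf\to\uf$, and conditions (ii)--(iii) come for free because $\uf$ is compactly embedded and $\rf$ is an ideal of $\h$. The forward implication is where you diverge from the paper, and where there is a genuine gap. The paper does not prove this direction from first principles: it quotes an elementary argument from Gordon--Wilson showing that a modification must act almost simply transitively on $H/U$, and then uses simple connectivity of $H/U\cong R$ to upgrade this to simple transitivity and to deduce that $R'$ is simply connected. You instead try to establish the transversality $\rf'\cap\uf=0$ directly, and your mechanism for doing so --- conjugating so that $\varphi(\rf)\subseteq\uf$ by appealing to conjugacy of maximal compactly embedded subalgebras --- does not work in general. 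First, $\uf$ need not be a maximal compactly embedded subalgebra of $\h$: any central directions of $\h$ lying in $\rf$ can be adjoined to it. Second, and decisively, a compactly embedded subalgebra containing $\varphi(\rf)$ may have a nonzero component along such central directions that no inner automorphism can remove. Concretely, take $R=\R^3$ with basis $e_1,e_2,e_3$, let $U=\SO(2)$ act on $\spa(e_1,e_2)$ with infinitesimal generator $J$, and set $\varphi(e_3)=e_3+J$, $\varphi(e_1)=\varphi(e_2)=0$. Then $\varphi(\rf)=\R(e_3+J)$ lies in a compactly embedded subalgebra (since $\ad(e_3+J)=\ad(J)$), but for any $h\in H$ one computes $\Ad(h)(e_3+J)=e_3+J+(\text{element of }\spa(e_1,e_2))$, so the $e_3$-component is $\Ad(H)$-invariant and $\varphi(\rf)$ can never be conjugated into $\uf$. (The conclusion of the proposition still holds in this example --- $\rf'=\spa(e_1,e_2,2e_3+J)$ is transversal to $\uf$ --- which shows your reduction is strictly weaker than the statement you need.) The real content of the forward direction is precisely to rule out a nonzero kernel of $\Id+\varphi_\rf$ without being able to normalize $\varphi$ this way; that is the "elementary argument" the paper imports from \cite{GordonWilson:IsomGrpsOfRiemSolv}, and your proposal does not supply a substitute for it.

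A secondary, smaller issue: even once $\h=\rf'\oplus\uf$ is known, your passage from "the orbit map $\alpha:R'\to H/U$ is a local diffeomorphism" to "$\alpha$ is a covering map, hence a diffeomorphism" needs more care than "a covering-space argument." A local diffeomorphism onto a simply connected target is a covering map only if it is, e.g., proper, and properness of $\alpha=q\circ\iota$ requires knowing that $R'$ is closed in $H$, which is not given. This is exactly the kind of point the almost-simple-transitivity statement from Gordon--Wilson, combined with simple connectivity of $H/U$, is designed to settle.
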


\begin{proof} First assume that $R'$ is a modification of $R$ in $H$.  It is shown by an elementary argument in \cite{GordonWilson:IsomGrpsOfRiemSolv} (without the hypothesis of simply-connectivity of $R$) that $R'$ must act almost simply transitively on $H/U$.   The fact that $H/U$ is simply-connected in our case implies that the action must be simply-transitive and that $R'$ must be simply-connected.

Conversely, if $R'$ acts simply transitively, then $\rf'$ is necessarily a complement to the Lie algebra $\uf $ of $U$ in $\h$.  The assertion that $\rf'$ is a modification of $\rf$ in $\h$ (and thus that $R'$ is a modification of $R$ in $H$) thus follows immediately from the fact that $\uf$ is compactly embedded in $\h$.
\end{proof}

\begin{remark}\label{stdmodalg} Let $\M$ be a simply-connected Riemannian solvmanifold and let $R$ be a simply-transitive group of isometries of $\M$.   By Remark~\ref{isomgrps}, we have $\rf\rtimes\Der_{\sk}(\rf)<\cisom(\M)$.   As shown in \cite{GordonWilson:IsomGrpsOfRiemSolv}, the orthogonal complement $\rf'$ of $\Der_{\sk}(\rf)$ in $\rf\rtimes\Der_{\sk}(\rf)$ relative to the Killing form is a modification of $\rf$, referred to as the \emph{standard modification} (relative  to the Riemannian structure).     The so-called \emph{standard modification algorithm} starts with $\rf$ and takes successive standard modifications.   It is shown in \cite{GordonWilson:IsomGrpsOfRiemSolv} that this algorithm stabilizes in at most two steps.  The resulting Lie algebra $\s$ and the corresponding simply-transitive subgroup $S$ of $\Isom_0(\M)$ are said to be in \emph{standard position}.

 \end{remark}

\begin{prop}\label{stdpos}
\cite{GordonWilson:TheFineStructureOfTransitiveRiemannianIsometryGroups,
GordonWilson:IsomGrpsOfRiemSolv}  Let $\M$ be a simply-connected Riemannian solvmanifold, let $G=\Isom_0(\M)$ and let $G=G_1G_2$ be a Levi decomposition of $G$, where $G_1$ is semisimple and $G_2$ is the solvable radical of $G$.  Let $G_1=KA_1N_1$ be an Iwasawa decomposition of $G_1$, and let $M$ be the normalizer of $A_1$ in $K$.  Set $F=(MA_1N_1)G_2$.   Let $K=K_1K_2$ and $M=M_1M_2$ be the Levi decompositions of the reductive groups $K$ and $M$ respectively.  (In particular, $M_1<K_1$).  Then:
\begin{enumerate}
\item There exists a point $p\in\M$ such that the isotropy subgroup $L$ of $G$ at $p$ satisfies
\begin{itemize}
\item $K_1<L<KG_2$,
\item $L$ normalizes $G_1$, and $G_1L$ is a reductive subgroup of $G$.
\end{itemize}
\item Let $S$ be the simply-connected subgroup of $F$ whose Lie algebra $\s$ is the orthogonal complement of $\lf\cap \ff$ relative to the Killing form of $\ff$.   Then $S$ is a simply-transitive solvable subgroup of $G$ normalized by $F$ and 
$$A_1N_1\nilrad(G)<S<M_2A_1N_1G_2.$$
Identifying $\M$ with $(S,g)$ where $g\in \Lc(S)$, we have
	 $$F=S\rtimes\Aut_\orth(S,g).$$
The group $S$ is in standard position in $G$.   
\item Up to conjugacy in $G$, every simply-transitive solvable subgroup $R$ of $G$ is contained in $F$ and thus is a modification of a conjugate of  $S$.
Moreover the standard modification procedure described in Remark~\ref{stdmodalg} carries $\rf$ to the Lie algebra of  a conjugate of  $S$.

\end{enumerate}
\end{prop}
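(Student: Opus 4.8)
The plan is to exploit that $\M$, carrying a simply-transitive action of a simply-connected solvable group, is diffeomorphic to $\R^n$ and hence contractible; this pins down the isotropy of $G=\Isom_0(\M)$ as a maximal compact subgroup, after which the solvable subgroup in standard position is read off from the Levi and Iwasawa data by a Killing-orthogonality construction. First I would prove that the isotropy $L$ at any $p\in\M$ is maximal compact in $G$. It is certainly compact; choosing a maximal compact $L'\supseteq L$, the fibre bundle $L'/L\to G/L\to G/L'$ has contractible total space $G/L\cong\M$ and contractible base $G/L'\cong\R^d$, so the compact fibre $L'/L$ is contractible, hence a point, and $L=L'$.

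The technical heart, and the step I expect to be the main obstacle, is positioning $L$ relative to the fixed decompositions so as to obtain $K_1<L<KG_2$ together with the normalization of $G_1$. The semisimple quotient $\overline G=G/G_2$ has maximal compact $\pi(K)$, and every maximal compact of $G$ surjects onto a maximal compact of $\overline G$ while meeting $G_2$ in a maximal torus. Using uniqueness up to conjugacy of maximal compacts, I would replace $L$ by a conjugate (i.e.\ move $p$) so that $L\subseteq KG_2$; the compact semisimple factor $K_1$ must then lie in $L$, since $K_1\cap G_2$ is finite and the semisimple part of the maximal compact of $\overline G$ is $\pi(K_1)$. To get that $L$ normalizes $G_1$ and that $G_1L$ is reductive, I would arrange (via Mostow) the Levi factor $G_1$ to be stable under a Cartan involution whose fixed-point set is $L$, making $G_1L$ a $\theta$-stable, hence reductive, subgroup. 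Reconciling this global Cartan-type decomposition of the non-reductive $G$ with its Levi and Iwasawa data, so that the hidden compact directions of the radical align with $L$, is the delicate point.

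With $L$ so placed, I would construct $\s$ as the Killing-orthogonal complement of $\lf\cap\ff$ in $\ff=\mf+\af_1+\n_1+\g_2$. Here $\lf\cap\ff$ is the maximal compactly embedded subalgebra of $\ff$ (it absorbs the compact semisimple part $\mf_1$ and the compact torus directions of $\ff$), and one verifies that its complement $\s$ is a solvable subalgebra complementary to the isotropy. A dimension count gives $\dim\s=\dim\ff-\dim(\lf\cap\ff)=\dim\M$ and $\s\cap\lf=0$, so the orbit map $S\to\M$ is a local diffeomorphism; simple-connectivity of $S$ and $\M$ upgrades it to a diffeomorphism, whence $S$ acts simply transitively. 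Tracking which directions survive the projection yields the sandwich $A_1N_1\nilrad(G)<S<M_2A_1N_1G_2$, and identifying $\M\cong(S,g)$ shows the isotropy $F\cap L$ acts by metric-preserving automorphisms of $S$, i.e.\ exponentiates $\Der_{\sk}(\s,g)$; since $F=S\,(F\cap L)$ this gives $F=S\rtimes\Aut_\orth(S,g)$.

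For the conjugacy statement (3) and standard position: any simply-transitive solvable $R<G$ has trivial isotropy at $p$, so its Lie algebra complements $\lf$; after conjugating into the configuration above, Proposition~\ref{prop: modification picking up metric}, applied with $U$ a maximal compact subgroup and $H=S\rtimes U$, exhibits $R$ as a modification of a conjugate of $S$. Finally, because the standard modification of Remark~\ref{stdmodalg} is itself a Killing-orthogonal projection, the two-step algorithm described there carries $\rf$ to the Lie algebra of that conjugate of $S$, establishing that $S$ is in standard position.
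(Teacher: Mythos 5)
This proposition is not proved in the paper at all: it is quoted from the two Gordon--Wilson references, so the only honest comparison is with those sources. Your outline does hit several of the right ingredients (contractibility of $\M$ forces the isotropy $L$ to be a maximal compact subgroup; $L$ can be conjugated into $KG_2$ and must contain $K_1$; the Killing-orthogonal complement construction of $\s$; Proposition~\ref{prop: modification picking up metric} for the modification statement). But as a proof it has genuine gaps at exactly the points where the cited papers do real work. First, the second bullet of (i) --- that $L$ normalizes $G_1$ and $G_1L$ is reductive --- is the content of the ``fine structure'' paper; there is no Cartan involution of the non-reductive group $G$, and Mostow's conjugacy theorems do not by themselves produce a Levi factor normalized by a prescribed maximal compact subgroup. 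You flag this yourself as ``the delicate point,'' which is to say you have not proved it. Second, the construction of $\s$: the Killing form of $\ff$ is highly degenerate (its radical contains $\nilrad(\ff)$), so the orthogonal complement of $\lf\cap\ff$ is not automatically a linear complement; the claims that $\s\cap\lf=0$, that $\dim\s=\dim\ff-\dim(\lf\cap\ff)$, that $\s$ is closed under the bracket (invariance of $B$ only gives $[\lf\cap\ff,\s]\subseteq\s$, not $[\s,\s]\subseteq\s$), and that $\s$ is solvable all require arguments you do not supply, as does the sandwich $A_1N_1\nilrad(G)<S<M_2A_1N_1G_2$.

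Third, and most seriously, in (iii) you conflate ``$\rf$ is a vector-space complement to $\lf$ in $\g$'' with ``$R$ is conjugate into $F$.'' The former is immediate from simple transitivity; the latter is Theorem 1.11 of the 1988 Gordon--Wilson paper and is the hard theorem here. Proposition~\ref{prop: modification picking up metric} only applies once you already know $R$ sits inside $S\rtimes U$ for a compact $U<\Aut(S)$, i.e.\ inside $F$ --- so your appeal to it is circular. Similarly, the final claim that the two-step standard modification algorithm lands on a conjugate of $S$ (i.e.\ that $S$ is in standard position) is not a formal consequence of ``the standard modification is a Killing-orthogonal projection''; it requires the stabilization result of Remark~\ref{stdmodalg} together with the characterization of standard position by maximality of the normalizer. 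If you want to turn this into a proof, these three items are where the work lies; otherwise the correct move is the one the paper makes, namely to cite \cite{GordonWilson:TheFineStructureOfTransitiveRiemannianIsometryGroups} for the second bullet of (i) and Propositions 1.7, 1.9, 1.11 of \cite{GordonWilson:IsomGrpsOfRiemSolv} for the rest.
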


See \cite{GordonWilson:TheFineStructureOfTransitiveRiemannianIsometryGroups} for the second bullet point in item (i), and see Propositions 1.7, Theorem 1.9 and Theorem 1.11 of \cite{GordonWilson:IsomGrpsOfRiemSolv} for the remaining statements of Proposiiton~\ref{stdpos}.

\begin{remark}\label{rem: unique comp solv} While some references speak of Iwasawa decompositions $H=KAN$ only in the setting of semisimple Lie groups of noncompact type, we are using this language in the more general setting of all semisimple Lie groups.  In this general setting, every simple normal compact subgroup of $H$ necessarily lies inside the subgroup $M$ defined in Proposition~\ref{stdpos}.    

The reason for our use of the subscripts in $A_1$ and $N_1$ is for convenience in what follows.

\end{remark}

In the remainder of this subsection, we restrict our attention to simply-connected completely solvable solvmanifolds.

\begin{prop}\label{lem.s1}  Let $(S,g)$ be a simply-connected completely solvable Riemannian solvmanifold and let $G=\Isom_0(S,g)$.   Then $S$ is in standard position.   In particular, $S$ and the isotropy group $L$ satisfy the conclusions of Proposition~\ref{stdpos}.   Moreover, we have:
\begin{enumerate}
\item $S=S_1\ltimes S_2$ where $S_1:=A_1N_1$ and $S_2:= S\cap G_2.$
\item Writing $N_2:=\nilrad(S_2)$, we have $\nilrad(S)=N_1\ltimes N_2$ and $N_2=\nilrad(G)$.
\item $G_2 =T\ltimes S_2$, where $T:=L\cap G_2$ is a torus that commutes with $G_1$.
\end{enumerate}
\end{prop}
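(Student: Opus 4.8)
The plan is to prove the proposition in two stages: first establish that $S$ is in standard position, and then read off the decompositions (1)--(3) by unwinding Proposition~\ref{stdpos} in the completely solvable setting. For the first stage, by Remark~\ref{stdmodalg} it suffices to show that the \emph{standard modification} of $\s$ is trivial, i.e.\ that $\s$ already equals the Killing-orthogonal complement of $\mathfrak d := \Der_{\sk}(\s)$ in $\tilde{\s} := \s \rtimes \Der_{\sk}(\s)$. Writing $\ad_{\tilde\s}(D)$ in block form on $\tilde\s = \s \oplus \mathfrak d$, one computes that the Killing form $B$ of $\tilde\s$ satisfies $B(D_1,D_2) = \mytrace_\s(D_1 D_2) + B_{\mathfrak d}(D_1,D_2)$, the sum of the form $\mytrace_\s(D_1 D_2)$ (negative definite on skew operators) and the Killing form of the compact algebra $\mathfrak d$ (negative semidefinite). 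Hence $B$ is negative definite on $\mathfrak d$, the modification map $\varphi\colon \s \to \mathfrak d$ determined by $B(X+\varphi(X),\mathfrak d)=0$ is well defined, and triviality of the standard modification is equivalent to the vanishing $B(\s,\mathfrak d)=0$.

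The heart of the argument is therefore to show $B(X,D)=0$ for all $X\in\s$ and $D\in\Der_{\sk}(\s)$. The same block computation yields the formula $B(X,D)=\mytrace_\s\big((\ad_\s X)D\big)$. Since the symmetric part of $\ad_\s X$ is $\mytrace$-orthogonal to the skew operator $D$, only the skew part contributes; and since $[\s,\s]\subseteq\nilrad(\s)$ while every derivation preserves $\nilrad(\s)$, the trace collapses onto the nilradical $\n := \nilrad(\s)$, giving $B(X,D)=\mytrace_\n\big((\ad_\s X)|_\n \, D|_\n\big)$. Here complete solvability enters: $(\ad_\s X)|_\n$ has real spectrum, and the claim is that it is $\mytrace$-orthogonal to the skew derivation $D|_\n$. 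I expect \emph{this} to be the main obstacle, and I want to flag a genuine subtlety: the identity is \emph{false} for an arbitrary skew derivation of $\n$ (on an abelian $\n$, a nilpotent $(\ad_\s X)|_\n$ pairs nontrivially with a rotation), so the proof must exploit that $D$ is a skew derivation of the \emph{whole} completely solvable algebra $\s$, which severely constrains $D|_\n$. I would extract this constraint either directly, by combining the derivation identity $[D,\ad_\s Y]=\ad_\s(DY)$ with the real-weight grading of $\ad_\s X$, or by invoking Proposition~\ref{modprops} together with the standard-position analysis of \cite{GordonWilson:IsomGrpsOfRiemSolv}, in which completely solvable simply-transitive groups are shown to sit in standard position.

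With $S$ in standard position, $S$ and the isotropy group $L$ satisfy Proposition~\ref{stdpos}; in particular $A_1N_1\,\nilrad(G)<S<M_2A_1N_1G_2$ and $\s\subseteq \m_2\oplus\af_1\oplus\n_1\oplus\g_2$. For (1), set $S_1:=A_1N_1$ and $S_2:=S\cap G_2$. As $G_2$ is the radical, $\g_2$ is an ideal of $\g$, so $\s_2:=\s\cap\g_2$ is an ideal of $\s$. Projecting $\s$ onto the torus factor $\m_2$, complete solvability forces this projection to vanish, since a nonzero $\m_2$-component would contribute imaginary eigenvalues to some $\ad_\s(X)$; hence $\s=(\af_1\oplus\n_1)\oplus\s_2=\s_1\oplus\s_2$, giving $S=S_1\ltimes S_2$. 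For (2), the inclusion $\nilrad(G)\subseteq S\cap G_2=S_2$ together with normality gives $\nilrad(G)\subseteq\nilrad(\s_2)=\n_2$, and conversely $\n_2$, being a nilpotent ideal of $\g$, lies in $\nilrad(G)$, so $\n_2=\nilrad(G)$; since $\af_1$ acts on $\n_1$ through nonzero real restricted-root weights, one checks directly that $\nilrad(\s)=\n_1\ltimes\n_2$.

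Finally, for (3), observe that $T:=L\cap G_2$ is a compact subgroup (the isotropy group $L$ of the Riemannian solvmanifold is compact) of the solvable group $G_2$, hence a torus. That $T$ commutes with $G_1$ follows from Proposition~\ref{stdpos}(i), where $L$ normalizes $G_1$ and $G_1L$ is reductive, so the compact radical direction $T$ centralizes the semisimple factor. The dimension count $\dim T=\dim G_2-\dim S_2=\dim L-\dim K$, together with the normality of $S_2$ in $G_2$ and the simple transitivity of $S$, then yields the splitting $G_2=T\ltimes S_2$, completing the proof once the vanishing $B(\s,\Der_{\sk}(\s))=0$ of the second paragraph is secured.
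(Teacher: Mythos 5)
Your handling of items (i)--(iii) runs along essentially the same lines as the paper's: the containment $(A_1N_1)\nilrad(G)<S<(A_1N_1)G_2$ from Proposition~\ref{stdpos}(ii) plus vanishing of the $\m_2$-component by complete solvability gives (i); the two inclusions between $\n_2$ and $\nilrad(\g)$ give (ii); and $G=LS$ plus the reductivity of $G_1L$ gives (iii). (One small point in (ii): your assertion that $\n_2$ is a $\g$-ideal is not automatic -- a priori it is only characteristic in $\s_2$ -- and needs the paper's observation that $[\g,\g_2]\subseteq\nilrad(\g)\subseteq\n_2$, so that every subspace of $\g_2$ containing $\nilrad(\g)$ is a $\g$-ideal.) The genuine gap is the one you flag yourself: the vanishing $\mytrace_\s\bigl(\ad_\s(X)\,D\bigr)=0$ for $X\in\s$ and $D\in\Der_{\sk}(\s,g)$, which is precisely the content of the paper's one-sentence claim that completely solvable algebras are unchanged by the standard modification algorithm. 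Your reduction to this identity is correct, and your observation that it fails for arbitrary skew operators (so that the derivation property of $D$ on all of $\s$ must be used) is exactly right, but ``I would extract this constraint either directly\dots or by invoking\dots'' is a plan, not a proof, and it is the crux of the first assertion of the proposition.

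Here is how to close it along the first route you propose. Set $\theta_t=e^{tD}\in\Aut(\s)\cap O(\s,g)$. Since $\theta_t$ commutes with $D$, we have $\mytrace\bigl(\ad_\s(\theta_tX)D\bigr)=\mytrace\bigl(\theta_t\ad_\s(X)\theta_t^{-1}D\bigr)=\mytrace\bigl(\ad_\s(X)D\bigr)$ for all $t$; the map $Y\mapsto\mytrace(\ad_\s(Y)D)$ is linear, so replacing $X$ by its average over the closure of $\{\theta_t\}$ (a torus in $\Aut(\s)\cap O(\s,g)$) changes nothing, and we may assume $DX=0$, i.e.\ $[D,\ad_\s(X)]=\ad_\s(DX)=0$. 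Now $\ad_\s(X)$ has real spectrum by complete solvability, while $D$ is skew, hence semisimple with purely imaginary spectrum, and the two commute. Each generalized eigenspace $V_\lambda\subset\s^{\C}$ of $\ad_\s(X)$ (with $\lambda\in\R$) is therefore $D$-invariant and defined over $\R$, so $\mytrace(D|_{V_\lambda})=0$ (a real operator whose eigenvalues are all purely imaginary has trace zero); moreover the nilpotent part $N_\lambda$ of $\ad_\s(X)|_{V_\lambda}$ is a polynomial in $\ad_\s(X)|_{V_\lambda}$, hence commutes with the semisimple operator $D|_{V_\lambda}$, preserves each of its eigenspaces, and is nilpotent there, so $\mytrace(N_\lambda D|_{V_\lambda})=0$. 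Summing over $\lambda$ gives $\mytrace(\ad_\s(X)D)=\sum_\lambda\bigl(\lambda\,\mytrace(D|_{V_\lambda})+\mytrace(N_\lambda D|_{V_\lambda})\bigr)=0$, which secures $B(\s,\Der_{\sk}(\s,g))=0$ and hence that the standard modification of $\s$ is trivial, as required. With this inserted, your argument is complete and agrees with the paper's.
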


We caution that the product $(L\cap G_1) T$ need not be all of $L$.  See, for example, the proof of Lemma~\ref{lem:Hermitian}.

\begin{proof} It is straightforward to see that completely solvable Lie groups remain unchanged during the standard modification algorithm described in Remark~\ref{stdmodalg}.  Thus $S$ is necessarily in standard position.

 (i). The fact that $\s$ is a completely solvable Lie algebra implies that its image $\pi(\s)$ under the projection $\pi:\g\to \g/\g_2\simeq \g_1$ is isomorphic to a completely solvable subalgebra of $\g_1$.   Combining this fact with  Proposition~\ref{stdpos}(ii), we have 
\begin{equation}\label{decs}(A_1N_1)\nilrad(G)<S<(A_1N_1)G_2\end{equation}
and (i) follows.

(ii) By Proposition~\ref{stdpos}(ii), we have $\nilrad(\g)<N_2$ and thus $\nilrad(\g)<\n_2$.  Since $[\g,\g_2]<\nilrad(\g)$, every subspace of $\g_2$ containing $\nilrad(\g)$ is a $\g$-ideal.  In particular, $\n_2$ is a nilpotent $\g$-ideal, and thus $\n_2<\nilrad(\g)$, so $N_2=\nilrad(G)$.   The fact that $\nilrad(S)=N_1\ltimes N_2$ is a straightforward consequence of (i).

 (iii)  Proposition ~\ref{stdpos}(i), statement (i) of the current proposition, and the fact that $G=LS$ together imply that   $G_2=(L\cap G_2) S_2$ and that $L\cap G_2$ commutes with $G_1$.  By (ii), $S_2$ is normal in $G$ so the product is semi-direct.   Letting $T=L\cap G_2$, then $T$ is compact and solvable, hence a torus.  
 
   \end{proof}

We continue to use the notation of Propositions~\ref{stdpos} and \ref{lem.s1}.

\begin{prop}\label{lem.s2}
 In addition to the hypotheses of Proposition~\ref{lem.s1}, suppose that $S$ can be written as a semi-direct product $S=AN$, where $N=\nilrad(S)$ and where $A$ is an abelian subgroup of $S$ satisfying the condition that $\ad_\s(\af)$ is reductive.  Then after possibly replacing $A$ by another subgroup of $S$ with the same properties, we have a decomposition 
$$A=A_1A_2 $$
where $A_1<S_1$ is defined as in Proposition~\ref{stdpos} and where $A_2$ is an abelian subgroup of $S_2$ satisfying:
	\begin{enumerate}
	\item $S_2=A_2\ltimes \nilrad(G)$.
	\item $A_2$ commutes with $G_1T$, where $T$ is the torus defined in Proposition~\ref{lem.s1}.     
	\end{enumerate}

\end{prop}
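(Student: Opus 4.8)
\smallskip
The plan is to pass to the semisimple quotient, carry $\af$ to a standard position there, and then split off the radical part using the reductivity of $G_1T$. Write $\pi\colon\g\to\g/\g_2\cong\g_1$ for the quotient homomorphism, and recall from the hypothesis that each $\ad_\s(X)$, $X\in\af$, is a semisimple endomorphism of $\s$, hence $\R$-diagonalizable by complete solvability. First I would analyze $\pi(\af)$. Since $\s_2=\s\cap\g_2=\ker(\pi|_\s)$ is an ideal of $\s$, it is preserved by every $\ad_\s(X)$, and a semisimple operator preserving a subspace induces a semisimple operator on the quotient; thus $\ad_{\s_1}(\pi(X))$ is semisimple for each $X\in\af$. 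Consequently $\pi(\af)$ is an abelian subalgebra of $\pi(\s)=\s_1=\af_1\oplus\n_1$ consisting of $\ad_{\s_1}$-semisimple elements, and $\pi(\af)+\n_1=\s_1$ shows it is a complement to $\n_1$. Hence $\pi(\af)$ is maximal among abelian subalgebras of the completely solvable algebra $\s_1$ acting by $\R$-semisimple transformations, so it is carried to the Iwasawa subalgebra $\af_1$ by $\Ad(n)$ for some $n\in N_1$ (conjugacy of maximal split tori under the nilradical). Since $N_1\subset S$ normalizes $\n=\nilrad(\s)$, replacing $A$ by $nAn^{-1}$ preserves all hypotheses on $A$; after this replacement $\pi(\af)=\af_1$.

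Now $\af\subset\pi^{-1}(\af_1)\cap\s=\af_1\oplus\s_2$. Set $\af_2:=\af\cap\s_2=\ker(\pi|_\af)$. Because $\af$ complements $\n=\n_1\oplus\n_2$ in $\s=\af_1\oplus\n_1\oplus\s_2$, we have $\dim\af=\dim\af_1+(\dim\s_2-\dim\n_2)$, while the exact sequence $0\to\af_2\to\af\to\af_1\to 0$ gives $\dim\af=\dim\af_1+\dim\af_2$; hence $\dim\af_2=\dim\s_2-\dim\n_2$. As $\af_2\cap\n_2\subset\af\cap\n=0$, the subalgebra $\af_2$ is a complement to the ideal $\n_2$ in $\s_2$, so $\s_2=\af_2\ltimes\n_2$. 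Since $N_2=\nilrad(G)$ by Proposition~\ref{lem.s1}, this is exactly statement (1): $S_2=A_2\ltimes\nilrad(G)$.

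It remains to arrange $\af_1\subset\af$ and to put $\af_2$ in the centralizer of $G_1T$. Here I would use that $[\g,\g_2]\subset\nilrad(\g)=\n_2$, so that both $[\g_1,\s_2]\subset\n_2$ and $[\tf,\s_2]\subset\n_2$; together with the fact that $T$ commutes with $G_1$, this says the reductive group $G_1T$, with Lie algebra $\q:=\g_1+\tf$, acts on $\s_2$ preserving $\n_2$ and trivially on $\s_2/\n_2$. By complete reducibility of finite-dimensional $G_1T$-modules, $\s_2/\n_2$ lifts to a $G_1T$-invariant complement; equivalently, conjugating by a suitable $\exp(Z)$ with $Z\in\n_2$ moves $\af_2$ to such a complement. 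Once $\af_2$ is $\q$-invariant one gets $[\q,\af_2]\subset\af_2\cap\n_2=0$, i.e.\ $\af_2$ centralizes $G_1T$, which is statement (2); in particular $[\af_1,\af_2]=0$ since $\af_1\subset\g_1$. For the group decomposition I would then straighten $\af_1$ into $\af$: a section $\sigma\colon\af_1\to\af$ of $\pi|_\af$ satisfies $\sigma(H)-H\in\n_2$, and $\af_1$ and $\sigma(\af_1)$ are abelian $\R$-diagonalizable subalgebras with equal image under $\pi$, so a conjugation by $\exp(\n_2)$ carries $\sigma(\af_1)$ onto $\af_1$. This yields $\af=\af_1\oplus\af_2$ and hence $A=A_1A_2$.

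The main obstacle is that these last two normalizations interfere: conjugating by $\exp(\n_2)$ to position $\af_2$ in general moves $\af_1$ (since $[\af_1,\n_2]\neq 0$), and conversely the straightening of $\af_1$ moves $\af_2$. The plan to resolve this is to perform the steps in the right order and within compatible subgroups --- first straighten $\af_1$ into $\af$, and then realize the $G_1T$-invariance of $\af_2$ using only conjugations by $\exp(Z)$ with $Z$ in the centralizer of $\af_1$ in $\n_2$, which fix $\af_1$ pointwise. Concretely this reduces to checking that the relevant adjustment is governed by a cohomology class that vanishes: the semisimplicity of $\g_1$ (Whitehead's lemma, $H^1(\g_1,\n_2)=0$) together with the averaging afforded by the compact torus $T$ should let one gauge away the off-diagonal terms while respecting the $G_1T$-action. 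Verifying that a single inner automorphism from $\exp(\n_2)$ simultaneously achieves $\af_1\subset\af$ and the centralizing property --- and that the resulting $\af$ remains an abelian subalgebra of semisimple elements complementary to $\n$ --- is the delicate step.
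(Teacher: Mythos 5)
Your strategy --- keep the given $A$ and normalize it into position by a sequence of inner conjugations --- is genuinely different from the paper's, and the difference matters exactly at the point you flag as delicate: the proposal does not close the gap there. Two things are missing. First, the existence of a complement to $\n_2$ in $\s_2$ that is simultaneously $G_1T$-invariant \emph{and} an $\exp(\ad\,\n_2)$-conjugate of $\af_2$ does not follow from complete reducibility of the $G_1T$-module $\s_2$: complete reducibility yields an invariant \emph{linear} complement, but there is no reason that complement is a subalgebra, let alone one lying in the $\exp(\ad\,\n_2)$-orbit of $\af_2$; the word ``equivalently'' in your step is doing unjustified work. What is actually needed is Mostow's theorem that a fully reducible group of automorphisms of a Lie algebra leaves invariant some maximal $\ad$-reductive subalgebra \cite[Corollary 5.1]{Mostow:FullyReducibleSubgrpsOfAlgGrps}. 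Your proposed substitute --- Whitehead's lemma for $H^1(\g_1,\n_2)$ plus averaging over $T$ --- only treats the linearized problem: the obstruction lives in the action of the nonabelian unipotent group $\exp(\ad\,\n_2)$ on the set of maximal reductive complements, and reducing it to abelian cohomology via the lower central series of $\n_2$ is precisely the content of Mostow's proof, not a one-line consequence of semisimplicity of $\g_1$. Second, even granting that step, the interference between straightening $\af_1$ into $\af$ and centralizing $\af_2$ is acknowledged but never resolved; you would need the centralizing conjugation to be implemented by $\exp(Z)$ with $Z\in C_{\n_2}(\af_1)$, which is an additional claim requiring proof.

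The paper sidesteps both difficulties by never conjugating the given $\af$ at all. It uses $\af$ only once, to observe that $\af+Z(\s)$ is a maximal $\ad$-reductive subalgebra of $\s$, so that by Mostow's conjugacy theorem \cite[Corollary 4.2]{Mostow:FullyReducibleSubgrpsOfAlgGrps} \emph{every} maximal $\ad$-reductive subalgebra of $\s$ is abelian. It then builds the replacement directly: $\af_1$ lies in some maximal $\ad$-reductive $\cf<\s$, which is forced to equal $\af_1+(\cf\cap\s_2)$ because $\pi(\cf)$ is a reductive subalgebra of $\s_1$ containing the maximal one $\af_1$; separately, Mostow's Corollary 5.1 supplies a $(\g_1+\tf)$-invariant maximal $\ad_{\s_2}$-reductive subalgebra $\bfrak$ of $\s_2$, and since $G_1T$ acts trivially on $\s_2/\n_2$ and $\bfrak\cap\n_2=Z(\s_2)$, one writes $\bfrak=Z(\s_2)\oplus\af_2$ with $\af_2$ commuting with $\g_1+\tf$. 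Commutation with $\g_1\supset\af_1$ then gives $[\af_1,\af_2]=0$ for free, so no straightening of $\af_1$ is ever needed. If you want to salvage your route, the cleanest repair is to invoke Mostow's Corollary 5.1 in place of the cohomological sketch, applying it to a fully reducible group whose Lie algebra also contains $\ad(\af_1)$, so that the resulting $\af_2$ automatically centralizes $\af_1$ and the two normalizations no longer conflict.
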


\begin{proof} The subalgebra $\af_1$ is $\ad$-reductive and thus lies in a maximal $\ad$-reductive subalgebra $\cf$ of $\s$.   By \cite[Corollary 4.2]{Mostow:FullyReducibleSubgrpsOfAlgGrps}, all maximal $\ad$-reductive subalgebras of $\s$ are conjugate.   One such is $\af+Z(\s)$, so all such are abelian.   Letting $\pi:\g\to\g_1$ be the projection with kernel $\g_2$, we have that $\pi(\cf)$ is an $\ad_{\s_1}$-reductive subalgebra containing $\af_1$ and thus must coincide with $\af_1$.   Hence $\cf=\af_1+(\cf\cap \s_2)$.   We have $Z(\s)< \cf\cap \s_2$.   Since $\s=\cf+\n$, we must have $\s_2=(\cf\cap \s_2)+\n_2$, with the intersection being $Z(\s)$.   A priori, $\cf\cap \s_2$ need not be a maximal $\ad_{\s_2}$-reductive subalgebra of $\s_2$ since $Z(\s_2)$ could properly contain $Z(\s)$.   However $(\cf\cap \s_2)+Z(\s_2)$ is a maximal $\ad_{\s_2}$-reductive subalgebra. In particular, all  maximal $\ad_{\s_2}$-reductive subalgebras are abelian and $\s_2$ is the sum of $\n_2$ and any such subalgebra. 

The subalgebra $\g_1+\uf$ of $\g$ is the Lie algebra of a fully reducible group of automorphisms of $\s_2$ and thus must leave some maximal $\ad_{\s_2}$-reductive subalgebra $\bfrak$ of $\s_2$ invariant (\cite[Corollary 5.1]{Mostow:FullyReducibleSubgrpsOfAlgGrps}).  Since $[\g,\s_2]<\n_2$ and, necessarily, $\bfrak\cap \n_2=Z(\s_2)$, we can decompose $\bfrak =Z(\s_2)\oplus\af_2$ where $\af_2$ commutes  with $\g_1+\uf$.  This is the desired subsapce $\af_2$.
\end{proof}

\subsection{Equivalence classes of solvable Lie groups.}

\begin{defin}\label{def.abstmod}
Let $R$ and $R'$ be simply-connected solvable Lie groups.  We will say that $R'$ is an \emph{abstract modification} of $R$ and that the Lie algebra $\rf'$ is an \emph{abstract modification} of $\rf$ if $R'$ is isomorphic to a modification of $R$ in $R\rtimes U$ for some compact connected subgroup $U$ of $\Aut(R)$.

\end{defin} 

In Definition~\ref{def.abstmod}, one can always choose $U$ to be a maximal compact connected subgroup of $\Aut(R)$ since if $U\subset U'<\Aut(R)$, where $U'$ is compact, then any modification in $R\times U$ can be viewed as a modification in $R\times U'$.   We emphasize that we are working in an abstract setting here so only require that $R'$ be isomorphic to the modification.

\begin{lemma}\label{abstmod conditions} Let $R$ and $R'$ be simply-connected solvable Lie groups.  The following are equivalent:
\begin{enumerate}
\item $R'$ is an abstract modification of $R$.
\item There exists a Lie group $G$ containing $R$ such that $R'$ is isomorphic to a modification of $R$ in $G$ in the sense of Definition~\ref{def.mod}.

\end{enumerate}

\end{lemma}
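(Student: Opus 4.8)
The implication (i)$\Rightarrow$(ii) is immediate: by Definition~\ref{def.abstmod} the modification defining $R'$ already takes place inside the Lie group $G:=R\rtimes U$, which contains $R$. So the substance is (ii)$\Rightarrow$(i). Suppose $\rf'=\{X+\varphi(X):X\in\rf\}$ is a modification of $\rf$ in $\g=\mathrm{Lie}(G)$ with modification map $\varphi$, and set $\af:=\varphi(\rf)$. By Definition~\ref{def.mod}, $\af$ is an abelian subalgebra that is compactly embedded in $\g$ and satisfies $[\af,\rf]\subseteq\rf$. My plan is to discard the ambient algebra $\g$ in favour of the economical algebra $\h:=\rf\rtimes\uf$, where $\uf\subseteq\Der(\rf)$ is the image of $\af$ under the restricted adjoint action $\alpha\colon\af\to\Der(\rf)$, $\alpha(v)=\ad_\g(v)|_\rf$ (which indeed lands in $\Der(\rf)$ precisely because $[\af,\rf]\subseteq\rf$), and then to recognise $\rf'$ as a modification of $\rf$ inside $\h$.

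Concretely, I would first form the subalgebra $\mf:=\rf+\af$ of $\g$; since $[\af,\rf]\subseteq\rf$, $\rf$ is an ideal of $\mf$ and $\mf$ contains both $\rf$ and $\rf'$. Because $\af$ is compactly embedded, the operators $\exp(t\,\alpha(v))$ ($v\in\af$) generate a relatively compact abelian subgroup of $GL(\rf)$; as $R$ is simply connected and derivations of $\rf$ integrate to automorphisms of $R$, the subalgebra $\uf:=\alpha(\af)$ is therefore the Lie algebra of a compact torus $U\le\Aut(R)$. Setting $H:=R\rtimes U$ gives $\h=\rf\rtimes\uf$. I would then build a surjective homomorphism $q\colon\mf\to\h$ that is the identity on $\rf$ and agrees with $\alpha$ on a subalgebra complement $\bfrak$ to $\cf:=\af\cap\rf$ in $\af$ (such a $\bfrak$ is abelian, so $\mf=\rf\rtimes\bfrak$). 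Writing $\varphi=\varphi_\cf+\varphi_\bfrak$ relative to $\af=\cf\oplus\bfrak$, one finds $q(\rf')=\{(X+\varphi_\cf(X))+\alpha(\varphi_\bfrak(X)):X\in\rf\}$; after the substitution $Y=(\mathrm{id}+\varphi_\cf)(X)$ this is a graph $\{Y+\bar\varphi(Y)\}$ with $\bar\varphi(Y)\in\uf$, i.e.\ exactly a modification of $\rf$ in $\rf\rtimes\uf$. Since $q|_{\rf'}$ is then a Lie-algebra isomorphism onto this modification, $R'$ is an abstract modification of $R$ in the sense of Definition~\ref{def.abstmod}; Proposition~\ref{prop: modification picking up metric} supplies the corresponding simply-transitive geometric picture if desired.

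The main obstacle is the behaviour of $q$ when $\cf=\af\cap\rf\neq 0$, that is, when part of the compactly embedded subalgebra already sits inside $\rf$ as ``internal rotations'' of $R$, together with the possible presence of directions in $\af$ that centralise $\rf$ (the kernel of $\alpha$). These are the only two ways $q$ can fail to restrict to an isomorphism on $\rf'$: the change of variable requires $\mathrm{id}+\varphi_\cf$ to be invertible on $\rf$ so that $q(\rf')$ is a graph over \emph{all} of $\rf$, and injectivity of $q|_{\rf'}$ requires $\rf'\cap\bfrak=0$. I expect the crux to be verifying that for a genuine modification---one for which $\rf'$ is a subalgebra and $\mathrm{id}+\varphi$ is injective---both conditions hold, the heart of the matter being that the internal-rotation part $\cf$ merely reparametrises $\rf$ while the centralising part contributes nothing to the bracket of $\rf'$. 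If a direct verification proves awkward, I would instead first reduce to the transverse case $\cf=0$ by replacing $(G,\varphi)$ with $(G\times T,\hat\varphi)$ for a suitable torus $T$ and modification map $\hat\varphi$ satisfying $\hat\varphi(\rf)\cap\rf=0$ and $\hat\rf'\cong\rf'$; once $\mf=\rf\rtimes\af$ with $\rf'\cap\af=0$, the map $q|_{\rf'}$ is visibly an isomorphism and the argument closes.
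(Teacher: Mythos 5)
Your argument follows the same route as the paper's own proof, which consists of exactly two sentences: since $\varphi(\rf)$ normalizes $\rf$, the adjoint action carries $\varphi(\rf)$ to a subalgebra $\uf$ of $\Der(\rf)$, and $\rf'$ is then declared to be isomorphic to a modification of $\rf$ in $\rf\rtimes\uf$. The degeneracies you isolate --- a nonzero intersection $\cf=\varphi(\rf)\cap\rf$ and directions of $\varphi(\rf)$ centralizing $\rf$ --- are precisely the points at which the naive identification $X+\varphi(X)\mapsto X+\ad_\g(\varphi(X))|_{\rf}$ can fail to be a Lie algebra homomorphism or to be injective (one checks it is a homomorphism exactly when $\cf=0$), and the paper's proof does not address them; your reparametrization by $\mathrm{id}+\varphi_\cf$, or the reduction to the transverse case, is the right repair. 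The one loose end you flag, invertibility of $\mathrm{id}+\varphi_\cf$, is likewise left untreated by the paper; it is harmless in the paper's intended usage, where modifications arise between simply transitive subgroups of a common isometry group (cf.\ Proposition~\ref{prop: modification picking up metric}), so that $\mathrm{id}+\varphi$ is injective and $\dim\rf'=\dim\rf$, but a fully self-contained proof of the lemma should record the argument you sketch. In short: same approach, and your version is the more careful of the two.
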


\begin{proof} 

The implication (i) $\implies$ (ii) is immediate.    

To see that (ii) $\implies$ (i), let $R$ and $R'$ be simply-connected solvable subgroups of a Lie group $G$ and suppose that $R'$ is a modification of $R$ in $G$ as in Definition~\ref{def.mod}.  Let  $\varphi:\rf\to\g$ be the associated modification map.   Since $\varphi(\rf)$ normalizes $\rf$, the adjoint action carries $\varphi(\rf)$ to a subalgebra $\uf$ of $\Der(\rf)$.  Thus $\rf'$ is isomorphic to a modification of $\rf$ in $\rf\rtimes \Der(\uf)$ and (i) follows. 
\end{proof}

\begin{defin}\label{def.scal} For $n\in \Z^+$, let $\Rcal_n$ denote the set of all (isomorphism classes) of $n$-dimensional simply-connected solvable Lie groups, and let $\Scal_n<\Rcal_n$ denote the subset consisting of all (isomorphism classes) of simply-connected completely solvable Lie groups.   We will abuse notation and view elements of $\Rcal_n$ and of $\Scal_n$ both as solvable Lie groups and as isomorphism classes of such groups.

\end{defin}

\begin{nota}\label{notajordan}	Given a Lie algebra $\g$ and $\delta\in\Der(\g)$, let $\delta=\delta_s+\delta_n$ be its Jordan decomposition, with $\delta_s$ semisimple and $\delta_n$ nilpotent.  By \cite[p. 203]{Mostow:FullyReducibleSubgrpsOfAlgGrps}, the fact that $\Der(\g)$ is the Lie algebra of an algebraic group implies that it contains the Jordan components of each of its elements, so $\delta_s$ and $\delta_n$ lie in $\Der(\g)$.   We further write $\delta_s=\delta_s^R + \delta_s^{iR}$, where $\delta_s^\R$ and $\delta_s^{i\R}$ are commuting semisimple endomorphisms with only real, respectively purely imaginary, eigenvalues.  It is easily verified that $\delta_s^\R$ and $\delta_s^{i\R}$ must also lie in $\Der(\g)$.
\end{nota}

\begin{lemma}\label{modprops2}\cite{Jablo:MaximalSymmetryAndUnimodularSolvmanifolds} Let $R$ be a simply-connected solvable Lie group.  Using Notation~\ref{notajordan}, define $\varphi: \rf \to \Der(\rf)$ by 
$$\varphi(X)=- \ad(X)_s^{i\R}.$$
Then:
\begin{enumerate}
\item  $\varphi$ is a modification map and 
$$\s:=\{X-\ad(X)_s^{i\R}:X\in \rf \}$$
 is a completely solvable normal modification of $\rf$ in $\rf\rtimes \Der(\rf)$.
\item Letting $U$ be a maximal compact subgroup of $\Aut(R)$ whose Lie algebra contains $\varphi(\rf)$, then the simply-connected completely solvable subgroup $S$ of $R\rtimes U$ with Lie algebra $\s$ is normal in  $R\rtimes U$.
\end{enumerate}
\end{lemma}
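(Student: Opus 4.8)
The plan is to verify, in order, the three conditions of Definition~\ref{def.mod}, then complete solvability, then normality, and finally to promote (i) to the group statement (ii). Write $\n=\nilrad(\rf)$. Two elementary observations about the adjoint representation carry most of the argument. Since $[\rf,\rf]\subseteq\n$, each $\ad(X)$ induces the zero map on $\rf/\n$; because $\n$ is $\ad(X)$-invariant and the Jordan and real/imaginary decompositions are compatible with passage to the invariant quotient $\rf/\n$, the component $\ad(X)_s^{i\R}$ also induces zero there, so $\ad(X)_s^{i\R}(\rf)\subseteq\n$. Secondly, for $n\in\n$ the derivation $\ad(n)$ is nilpotent, so $\ad(n)_s^{i\R}=0$.

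The one genuinely delicate point is linearity of $\varphi$, which is needed even to know that $\s$ is a subspace. The assignment $X\mapsto\ad(X)_s^{i\R}$ is not additive in general (the semisimple part of a sum is not the sum of the semisimple parts, as already $\mathfrak{e}(2)$ shows), so the formula must be read relative to a fixed torus. I would use the algebraic structure recorded in Notation~\ref{notajordan}: let $\mathfrak H$ be the algebraic hull of $\ad(\rf)$ in $\gl(\rf)$, choose a maximal torus $\tf$ of $\mathfrak H$ with real/elliptic splitting $\tf=\tf^{\R}\oplus\tf^{i\R}$, and set $\varphi=-P\circ\ad$, where $P$ is the linear projection of $\mathfrak H$ onto the compact abelian subalgebra $\tf^{i\R}$ along $\tf^{\R}$ and the unipotent radical. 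This $\varphi$ is linear, has image in the compactly embedded $\tf^{i\R}$, agrees with $-\ad(X)_s^{i\R}$ whenever $\ad(X)\in\tf$, and satisfies $\varphi(\rf)(\rf)\subseteq\n$ and $\varphi(\n)=0$ for the same reasons as above; in particular $\n\subseteq\s$.

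Granting this, the modification conditions are immediate: (i) is the definition of $\s$; (ii) holds because $\varphi(\rf)\subseteq\tf^{i\R}$ is compactly embedded; and (iii) reads $[\varphi(X),Y]=\varphi(X)(Y)\in\n\subseteq\rf$. That $\s$ is a subalgebra then follows from $\varphi(\rf)$ being abelian: the $\Der(\rf)$-component of $[X+\varphi(X),Y+\varphi(Y)]$ is $[\varphi(X),\varphi(Y)]=0$, while the $\rf$-component $[X,Y]+\varphi(X)(Y)-\varphi(Y)(X)$ lies in $\n\subseteq\s$. For complete solvability I would use the flag $\n\subseteq\s$: it is an ideal of $\s$, the quotient $\s/\n$ is abelian with trivial adjoint action (brackets of $\s$ already land in $\n$), and $\ad_\s(X+\varphi(X))|_{\n}=\ad(X)|_{\n}-\ad(X)_{\tf^{i\R}}|_{\n}$, whose eigenvalues are the real parts of those of $\ad(X)|_{\n}$ and hence real. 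So every $\ad_\s(s)$ has real spectrum and $\s$ is completely solvable.

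Normality is then direct: for $D\in\varphi(\rf)$ and $s=X+\varphi(X)$, abelianness of $\tf^{i\R}$ together with the first observation gives $[D,s]=D(X)\in\n\subseteq\s$, so $\varphi(\rf)$ normalizes $\s$ (alternatively, $\rf$ is the reverse modification of the completely solvable $\s$, so Proposition~\ref{modprops} applies). For (ii) the essential input is $\Aut(R)$-equivariance of $\varphi$. Given a maximal compact subgroup $U<\Aut(R)$, Mostow's theorem \cite{Mostow:FullyReducibleSubgrpsOfAlgGrps} (as invoked in the proof of Proposition~\ref{lem.s2}) allows the maximal torus $\tf$ to be chosen $U$-invariant; then $\tf^{i\R}\subseteq\uf$, so $\varphi(\rf)\subseteq\uf$, the projection $P$ is $U$-equivariant, and $a\,\varphi(X)\,a^{-1}=\varphi(aX)$ for all $a\in U$. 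Hence conjugation by $a\in U$ carries $X+\varphi(X)$ to $aX+\varphi(aX)\in\s$, so $U$ preserves $\s$, while $[\rf,\s]\subseteq\n\subseteq\s$ shows $R$ normalizes $\s$; thus $\s$ is an ideal of $\rf\rtimes\uf$ and the connected subgroup $S$ is normal in $R\rtimes U$. The steps requiring the most care are the torus-based formulation of linearity and the $U$-invariant choice of $\tf$; the rest is routine bracket computation.
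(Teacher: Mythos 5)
Your argument is correct, and the point you isolate---that $X\mapsto\ad(X)_s^{i\R}$ is not additive, so the displayed set $\s$ is not even a linear subspace unless the formula is read relative to a fixed maximal torus of the algebraic hull of $\ad(\rf)$ (equivalently, a Cartan subalgebra of $\rf$)---is genuine and not a misreading on your part: already for $\rf=\mathfrak{e}(2)$ with $[T,X]=Y$, $[T,Y]=-X$, the operator $\ad(T+X)$ is semisimple with spectrum $\{0,\pm i\}$, so $\ad(T+X)_s^{i\R}=\ad(T)+\ad(X)$ while $\ad(T)_s^{i\R}+\ad(X)_s^{i\R}=\ad(T)$. The paper supplies no proof of this lemma, deferring to pp.~422--423 of \cite{Jablo:MaximalSymmetryAndUnimodularSolvmanifolds}, and the construction there (which rests on the splitting theory of \cite{GordonWilson:IsomGrpsOfRiemSolv}) is only meaningful after exactly the kind of choice you make; so your torus-based reformulation is the intended reading rather than a different route, and the value of your write-up is that it makes the implicit convention explicit and then derives Definition~\ref{def.mod}, the subalgebra property, complete solvability, normality, and the $U$-equivariance cleanly from the two containments $\varphi(\rf)(\rf)\subseteq\n$ and $\varphi(\n)=0$. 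Two small points deserve one more line each. First, your claim that $\ad(X)-\varphi(X)$ has real spectrum uses the fact that in a solvable algebraic Lie algebra $\tf\ltimes\mathfrak{u}$ the eigenvalues of $t+u$ (with $t\in\tf$ and $u$ in the unipotent radical) coincide with those of $t$; this is immediate from Lie's theorem, since a nilpotent element has zero diagonal in any simultaneous triangularization, but it should be stated. Second, in part (ii) the lemma fixes $\varphi$ before $U$, whereas you re-choose the maximal torus to be $U$-invariant after $U$ is given, which changes $\varphi$; this is harmless because any two maximal tori of the algebraic hull are conjugate by the unipotent radical, so the resulting algebras $\s$ are carried to one another by automorphisms of $\rf\rtimes\Der(\rf)$ preserving $\rf$, but the reordering of choices should be acknowledged.
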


The proof  in \cite[pages 422-423]{Jablo:MaximalSymmetryAndUnimodularSolvmanifolds} is in the setting of unimodular solvable Lie groups but is valid without the assumption of unimodularity.  

\begin{thm}\label{thm: equiv classes} We use the notation of Definition~\ref{def.scal}. Define a map $\sigma:\Rcal_n\to\Scal_n$ by letting $\sigma(R)$ be the modification of $R$ defined in Lemma~\ref{modprops2} for each $R\in\Rcal$.  Define an equivalence relation on $\Rcal_n$ by the condition
 $$R\equiv R' \,\,\iff\,\,\sigma(R)\simeq \sigma(R').$$  Then:
	\begin{enumerate}
	\item Each equivalence class in $\Rcal_n$ contains exactly one (up to isomorphism) completely solvable Lie group $S$.  Moreover, every element in the equivalence class is an abstract modification of $S$.
	\item The following conditions are equivalent:
		\begin{enumerate}[label=(\alph*),ref=(\alph*)] 
		\item\label{first} $R\equiv R'$
		\item\label{second} There exist left-invariant metrics $g$ on $R$ 	and $g'$ on $R'$ such that $(R,g)$ is isometric to $(R',g')$.   
		\item\label{third} $R'$ can be obtained from $R$ by a succession of abstract modifications.
		\end{enumerate}
	\end{enumerate}
\end{thm}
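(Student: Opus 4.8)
The plan is to prove statement (1) directly and to split statement (2) into the two equivalences $\text{(a)}\iff\text{(c)}$ and $\text{(a)}\iff\text{(b)}$, with each backward implication resting on a single invariance statement for $\sigma$. For (1), the key observation is that $\sigma$ fixes completely solvable groups: if $S$ is completely solvable then $\ad(X)_s^{i\R}=0$ for every $X\in\s$, so the modification map of Lemma~\ref{modprops2} vanishes and $\sigma(S)\simeq S$. Hence $\sigma(\sigma(R))\simeq\sigma(R)$, so $S:=\sigma(R)$ is a completely solvable group with $S\equiv R$, giving existence. Uniqueness is then immediate, since a completely solvable $S'$ with $S'\equiv R$ satisfies $S'\simeq\sigma(S')\simeq\sigma(R)=S$. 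Finally, by Lemma~\ref{modprops2} the modification $S$ of $R$ is normal, hence reversible (Definition~\ref{def.mod}), so $R$ is itself a modification of $S$ inside the ambient algebra $\rf\rtimes\Der(\rf)$, which contains $S$; Lemma~\ref{abstmod conditions} then shows $R$ is an abstract modification of $S$, and the same applies to any $R''\equiv R$ since $\sigma(R'')\simeq S$.

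For the forward directions of (2), consider first $\text{(a)}\implies\text{(c)}$: if $R\equiv R'$ then $S:=\sigma(R)\simeq\sigma(R')$, while Part (1) exhibits $S$ as an abstract modification of $R$ and $R'$ as an abstract modification of $S$, so $R\to S\to R'$ is the desired succession. For $\text{(a)}\implies\text{(b)}$ I would realize $R$ and $R'$ on a single solvmanifold: by Part (1) both are abstract modifications of the common completely solvable $S$, so, enlarging to a maximal compact connected $U<\Aut(S)$ (the remark after Definition~\ref{def.abstmod}), both are modifications of $S$ in $H:=S\rtimes U$. Averaging produces a $U$-invariant inner product on $\s$, making $H/U$ an $H$-homogeneous Riemannian manifold isometric to $(S,g_S)$; by Proposition~\ref{prop: modification picking up metric} each modification acts simply transitively on $H/U$ and so inherits a left-invariant metric isometric to $(S,g_S)$, whence $(R,g)\simeq H/U\simeq(R',g')$.

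Both backward implications reduce to the invariance lemma: \emph{a single modification preserves $\sigma$ up to isomorphism.} Granting it, $\text{(c)}\implies\text{(a)}$ follows by induction along the succession of abstract modifications, each step leaving $\sigma$ unchanged. For $\text{(b)}\implies\text{(a)}$ I would set $\M=(R,g)=(R',g')$ and $G=\Isom_0(\M)$; then $R$ and $R'$ are simply transitive solvable subgroups of $G$, so by Proposition~\ref{stdpos}(iii) each is, up to conjugacy, a modification of the single standard-position subgroup $S_0<G$, and invariance yields $\sigma(R)\simeq\sigma(S_0)\simeq\sigma(R')$. Note that $S_0$ need not be completely solvable here, which is precisely why the invariance lemma cannot be bypassed by identifying $S_0$ with the common completely solvable representative.

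I expect the invariance lemma to be the main obstacle: if $\rf'$ is a modification of $\rf$, then $\sigma(\rf)\simeq\sigma(\rf')$. The guiding principle is that $\sigma$ records the ``real skeleton'' of the solvable structure, while a modification map, having image in a compactly embedded subalgebra, perturbs only the purely imaginary semisimple part $\ad(\cdot)_s^{i\R}$ of the adjoint (Notation~\ref{notajordan}); subtracting that part should therefore return the same completely solvable algebra. A convenient reduction, using that modifications compose, is to a completely solvable base: since $\rf$ is a reversible modification of $\s=\sigma(\rf)$, it suffices to show that $\sigma$ carries every modification of a completely solvable $\s$ back to $\s$, where the reverse modification map can be identified with $-\ad(\cdot)_s^{i\R}$. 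The delicate bookkeeping lies in tracking the Jordan decomposition of $\ad$ through the modification — verifying that the adjoint of the modification map $\varphi$ contributes exactly the new imaginary semisimple summand — and here the abelianness and compact embedding of $\varphi(\rf)$, together with the normality guaranteed by Proposition~\ref{modprops} for completely solvable bases, are what make the computation go through.
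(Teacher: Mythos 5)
Your overall architecture (statement (1), the cycle of implications in (2), and the use of Proposition~\ref{stdpos}(iii) on the metric side) matches the paper's, and the proofs of (1) and of (a)$\implies$(b), (a)$\implies$(c) are essentially the paper's arguments. The problem is that everything you need for (c)$\implies$(a) and (b)$\implies$(a) is deferred to your ``invariance lemma'' --- that a single modification preserves $\sigma$ up to isomorphism --- and this, which you yourself identify as ``the main obstacle,'' is never actually proved. The Jordan-decomposition bookkeeping you sketch is not just delicate; it does not obviously get off the ground. If $\s$ is completely solvable and $\rf=\{X+\varphi(X)\}$ is a modification of $\s$ in $\h=\s\rtimes\uf$, then $\sigma(\rf)$ lives in the ambient algebra $\rf\rtimes\Der(\rf)$ and is built from $\ad_\rf(Y)_s^{i\R}$ for $Y\in\rf$, whereas your candidate ``reverse map'' is $-\ad_\h(\varphi(X))$ computed in $\h$. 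Relating the two requires, at minimum, that the relevant operators restrict to derivations of $\rf$; but $[X,X'+\varphi(X')]=[X,X']+[X,\varphi(X')]$ only lands in $\s$, not in $\rf$, so $\ad_\h(X)$ need not normalize $\rf$ and the intended identification of Jordan components is not a routine verification. As written, the two backward implications rest on an unproved claim.

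The paper closes exactly this gap by a structural argument that avoids any Jordan-component computation, and you already have all the ingredients in hand. Given $R'$ a modification of $R$ in $H:=R\rtimes U$ with $U$ maximal compact connected, Lemma~\ref{modprops2}(ii) gives $H=S\rtimes U$ with $S=\sigma(R)$ normal in $H$; by Proposition~\ref{prop: modification picking up metric}, $R'$ acts simply transitively on $H/U$ and hence is a modification of $S$ in $H$; by Proposition~\ref{modprops} this modification is normal (here is where complete solvability of $S$ is used), hence reversible, so $S$ is a modification of $R'$, and one concludes $\sigma(R')\simeq S$ via Lemma~\ref{abstmod conditions}. I recommend you replace your computational sketch with this argument (or cite \cite{Jablo:MaximalSymmetryAndUnimodularSolvmanifolds}, where the assertion is proved); with the invariance statement in place, the rest of your write-up goes through. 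Note also that the paper proves (b)$\implies$(c) directly from Proposition~\ref{stdpos}(iii) and then closes the cycle with (c)$\implies$(a), which is marginally cleaner than your (b)$\implies$(a), but either routing is fine once the invariance step is established.
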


\begin{proof}[Proof of Theorem~\ref{thm: equiv classes}]
Existence and uniqueness of $S$ are immediate since $\sigma(S)=S$ whenever $S$ is completely solvable.    The second statement in (i) follows from the fact that $\sigma(R)$ is a normal modification of $R$ as indicated in Lemma~\ref{modprops2}.

We prove statement (ii).   
 \ref{first}  $\implies$ \ref{second}:    Let $S\simeq\sigma(R)\simeq\sigma(R')$, let $U$ be a maximal connected compact subgroup of $\Aut(S)$, and choose $g_0\in \Lc(S)$ such that $g_0$ is $U$-invariant.   Viewing $R$ and $R'$ as modifications of $S$ in $S\rtimes U<\Isom(S,g_0)$, then $R$ and $R'$  are simply-transitive subgroups of $\Isom(S,g_0)$ and thus each inherits a Riemannian metric isometric to $g_0$.

\ref{second} $\implies$ \ref{third}:  Let $\M$ be the Riemannian solvamanifold $(R,g)$.  We may view both $R$ and $R'$ as simply-transitive solvable subgroups of $\Isom(\M)$.   By the last statement of Proposition~\ref{stdpos}, the standard modification algorithm carries $R$ to a subgroup $R''$ of $\Isom(\M)$ in standard position, and another modification carries $R''$ to a copy of $R'$.

\ref{third} $\implies$ \ref{first}:  It suffices to show that if $R'$ is an abstract modification of $R$, then $\sigma(R)=\sigma(R')$.   This assertion is proven in \cite{Jablo:MaximalSymmetryAndUnimodularSolvmanifolds} although the language of abstract modifications is not used.    We give a slightly different proof here.  We may view $R'$ as a modification of $R$ in $H:=R\rtimes U$ where $U$ is a compact connected subgroup of $\Aut(R)$.  Let $S=\sigma(R)$.   Lemma~\ref{modprops2} implies that $H=S\rtimes U$ and thus $R'$ is also a modification of $S$ in $H$.  (See Proposiiton~\ref{prop: modification picking up metric}.)   By Proposition~\ref{modprops}, this modification is normal and thus $S$ is a modification of $R'$.  By Lemma~\ref{abstmod conditions}, we have $\sigma(R')=S$.

\end{proof}

\section{Maximal, almost maximal, and infinitesimal maximal symmetry}

\begin{defin}\label{def:max sym} Let $H$ be a connected Lie group.  As before, we let $\Lc(H)$ denote the space of all left-invariant Riemannian metrics on $H$.   
	\begin{enumerate}
	\item A left-invariant Riemannian metric $g_0$ on $H$ is said to be \emph{maximally symmetric}, respectively \emph{almost maximally symmetric}, if for every $g\in \Lc(H)$, there exists $\Phi\in\Aut(H)$ such that $\Isom(H,g) < \Phi\,\Isom(H,g_0)\Phi^{-1}$, respectively $\Isom_0(H,g) < \Phi\,\Isom_0(H,g_0)\Phi^{-1}$.

	\item  We say that a left-invariant Riemannian metric $g_0$ on $H$ is \emph{infinitesimally maximally symmetric} if for every $g\in \Lc(H)$, the isometry algebra $\cisom(H, g)$ is abstractly isomorphic to a subalgebra of $\cisom(H,g_0)$. 
	\end{enumerate}
\end{defin}

\begin{remark}\label{rem.contrast}~
\begin{enumerate}
\item The condition $\Isom(H,g) < \Phi\,\Isom(H,g_0)\Phi^{-1}$ in Definition~\ref{def:max sym} is equivalent to the condition $\Isom(H,g)< \Isom(H,\Phi^*g_0)$.  The latter expression was used in the definition of maximally symmetric metric in \cite{GordonJablonski:EinsteinSolvmanifoldsHaveMaximalSymmetry}.
\item We emphasize that in the definitions of maximal and almost maximal symmetry, the inclusions are not merely abstract monomorphisms but rather inclusions of transformation groups of $H$.   We contrast this with the definition of infinitesimal maximal symmetry.    Here one has an abstract monomorphism $\cisom(H, g)\to \cisom(H, g_0)$ that need not respect the infinitesimal actions of these isometry algebras; moreover the monomorphism need not give rise to a Lie group homomorphism from $\Isom_0(H,g)$ to $\Isom_0(H,g_0)$.  See, e.g., 
Example~\ref{ex: ozeki}.
 
\end{enumerate}

\end{remark}

\subsection{Existence of metrics exhibiting the various types of maximal symmetry}

For arbitrary Lie groups, the question of existence of left-invariant metrics satisfying the various conditions in Defintion~\ref{def:max sym} is very difficult.     We are continuing to investigate this question, see e.g. \cite{EpsteinJablonski:MaximalSymmetryAndSolvmanifolds}.

\begin{prop}\label{prop.normal} Let $H$ be a connected Lie group.  
\begin{enumerate}
\item Suppose that every $g\in \Lc(H)$ satisfies $\Isom_0(H,g)< H\rtimes \Aut(H)$.  Then $H$ admits almost maximally symmetric left-invariant  metrics.   
Specifically, a metric $g\in \Lc(H)$ is almost maximally symmetric if and only if $g$ is invariant under some maximal compact subgroup of $\Aut_0(H)$.  

\item \label{prop.normal.item2} \cite{GordonJablonski:EinsteinSolvmanifoldsHaveMaximalSymmetry}   Suppose that every $g\in \Lc(H)$ satisfies $\Isom(H,g)< H\rtimes \Aut(H)$ and that $\Aut(H)$ has only finitely many components.   (The  latter condition is always satisfied if $H$ is simply-connected.)   Then $H$ admits maximally symmetric metrics, namely the metrics $g\in \Lc(H)$ that are invariant under some maximal compact subgroup of $\Aut(H)$.  
\end{enumerate}
	\end{prop}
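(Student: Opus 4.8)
The plan is to reduce everything to the structure of $\Isom_0(H,g)$ forced by the hypothesis, and then to invoke the conjugacy of maximal compact subgroups in the connected Lie group $\Aut_0(H)$. I will treat statement (i) in detail; statement (ii) is cited and follows the identical pattern with the full isometry group and $\Aut(H)$ in place of $\Aut_0(H)$, the finite-component hypothesis being exactly what is needed to apply conjugacy of maximal compacts in the possibly disconnected group $\Aut(H)$.

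\emph{Step 1 (structural reduction).} First I would show that, under the hypothesis, every $g\in\Lc(H)$ satisfies $\Isom_0(H,g)=H\rtimes K^g$ for some \emph{connected} compact subgroup $K^g$ of $\Aut_0(H)$ that preserves $g$. Indeed, since $\Isom_0(H,g)< H\rtimes\Aut(H)$, every element of $\Isom_0(H,g)$ normalizes $H$, so $\Isom_0(H,g)$ lies in the normalizer of $H$ in $\Isom(H,g)$, which by Remark~\ref{isomgrps} is $H\rtimes\Aut_\orth(H,g)$. Because $H<\Isom_0(H,g)$ and $H$ acts transitively, we get $\Isom_0(H,g)=H\rtimes L_0$, where $L_0$ is the isotropy at the identity; this $L_0$ is a subgroup of $\Aut_\orth(H,g)$ and is compact because the isotropy representation of any connected Riemannian isometry group is faithful with closed orthogonal image. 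Projecting $\Isom_0(H,g)$ to $\Aut(H)$ identifies $L_0$ with a \emph{connected} subgroup $K^g$ of $\Aut_0(H)$ (the image of a connected group under the projection), preserving $g$.

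\emph{Step 2 (existence).} Let $U$ be a maximal compact subgroup of $\Aut_0(H)$, which is connected by the Cartan--Iwasawa--Malcev theorem. Averaging an arbitrary inner product on $\h$ over $U$ produces a $U$-invariant metric $g_0\in\Lc(H)$, so $H\rtimes U$ is a connected subgroup of $\Isom_0(H,g_0)$. Applying Step 1 to $g_0$ gives $\Isom_0(H,g_0)=H\rtimes K^{g_0}$ with $U\subseteq K^{g_0}$ compact, whence maximality of $U$ forces $K^{g_0}=U$ and $\Isom_0(H,g_0)=H\rtimes U$. To see $g_0$ is almost maximally symmetric, fix any $g\in\Lc(H)$ and write $\Isom_0(H,g)=H\rtimes K^g$ as in Step 1. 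Since all maximal compact subgroups of the connected group $\Aut_0(H)$ are conjugate and $K^g$ is compact, there is $\Phi\in\Aut_0(H)$ with $\Phi K^g\Phi^{-1}<U$, i.e. $K^g<\Phi^{-1}U\Phi$. As $\Phi$ normalizes $H$, conjugation yields $\Isom_0(H,g)=H\rtimes K^g<H\rtimes(\Phi^{-1}U\Phi)=\Phi^{-1}\big(H\rtimes U\big)\Phi=\Phi^{-1}\Isom_0(H,g_0)\Phi$, which is the required inclusion (with $\Psi=\Phi^{-1}$) in Definition~\ref{def:max sym}.

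\emph{Step 3 (the characterization).} The implication ``$U$-invariant $\Rightarrow$ almost maximally symmetric'' is exactly Step 2 applied to any metric invariant under a maximal compact $U$. For the converse, suppose $g_0$ is almost maximally symmetric and write $\Isom_0(H,g_0)=H\rtimes K^{g_0}$. If $K^{g_0}$ were not maximal compact, choose a maximal compact $U\supsetneq K^{g_0}$, so $\dim K^{g_0}<\dim U$ (a proper closed connected subgroup of equal dimension would be open, hence all of $U$). Taking a $U$-invariant metric $g_1$ gives $\Isom_0(H,g_1)=H\rtimes U$ by Step 2. Almost maximal symmetry of $g_0$ would yield $\Phi\in\Aut(H)$ with $H\rtimes U=\Isom_0(H,g_1)<\Phi\,\Isom_0(H,g_0)\,\Phi^{-1}=H\rtimes\Phi K^{g_0}\Phi^{-1}$; projecting to $\Aut(H)$ gives $U<\Phi K^{g_0}\Phi^{-1}$, impossible since the latter is compact of strictly smaller dimension. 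Hence $K^{g_0}$ is maximal compact in $\Aut_0(H)$ and $g_0$ is invariant under it.

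I expect the main obstacle to be the clean establishment of the semidirect decomposition in Step 1---in particular verifying that the isotropy is both compact and \emph{connected} so that it projects to a connected compact subgroup of $\Aut_0(H)$, and carefully tracking how conjugation by elements of $\Aut(H)$ interacts with the normal factor $H$. Once that structural normal form is in hand, the existence and the ``only if'' direction become bookkeeping on top of the conjugacy theorem for maximal compact subgroups, which is the genuine engine of the argument (and is precisely where the finite-component hypothesis enters in statement (ii)).
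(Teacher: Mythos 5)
Your proof is correct and follows essentially the same route the paper has in mind: the paper omits the argument for (i), noting it is the elementary adaptation of the proof of (ii) from \cite{GordonJablonski:EinsteinSolvmanifoldsHaveMaximalSymmetry}, which is exactly your scheme of writing $\Isom_0(H,g)=H\rtimes K^g$ with $K^g$ compact and connected (hence in $\Aut_0(H)$) and then invoking conjugacy of maximal compact subgroups. Your observation that $L_0$ is connected because it equals the image of the connected group $\Isom_0(H,g)$ under the projection to $\Aut_\orth(H,g)$ is the right way to handle the one point the statement of (i) is sensitive to.
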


Example~\ref{exa: torus} will show that the hypothesis on $\Aut(H)$ in statement (\ref{prop.normal.item2}) cannot be removed.

We omit the elementary proof of (i), which is based on the same ideas as the proof of (\ref{prop.normal.item2}) given in \cite{GordonJablonski:EinsteinSolvmanifoldsHaveMaximalSymmetry}

\begin{cor}\label{cor.pos}~
\begin{enumerate}
\item (See \cite{GordonJablonski:EinsteinSolvmanifoldsHaveMaximalSymmetry}.)  Every compact connected simple Lie group and every connected semisimple Lie group of noncompact type admit maximally symmetric left-invariant metrics.
\item Every connected completely solvable unimodular Lie group $H$ admits almost maximally symmetric left-invariant metrics.   Moreover \cite{GordonJablonski:EinsteinSolvmanifoldsHaveMaximalSymmetry}, if $H$ is simply-connected, then $H$ admits maximally symmetric left-invariant metrics.
\item   Every connected Lie group $H$ satisfying the conditions that some (hence every) semisimple Levi factor is of noncompact type and that $[\h,\h]=\h$ admits almost maximally symmetric metrics.  

\end{enumerate}

\end{cor}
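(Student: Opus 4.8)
The plan is to derive all three statements from Proposition~\ref{prop.normal}: for a given $H$ it suffices to check, for every $g\in\Lc(H)$, the containment $\Isom_0(H,g)<H\rtimes\Aut(H)$ in order to conclude almost maximal symmetry via Proposition~\ref{prop.normal}(i), and the stronger containment $\Isom(H,g)<H\rtimes\Aut(H)$ together with finiteness of the number of components of $\Aut(H)$ in order to conclude maximal symmetry via Proposition~\ref{prop.normal}(ii). Once such a containment is known, Proposition~\ref{prop.normal} even exhibits the relevant metrics explicitly as those invariant under a maximal compact subgroup of $\Aut_0(H)$, respectively $\Aut(H)$. Thus the entire corollary reduces to verifying the appropriate containment in each of the three cases.

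For statement (i) I would cite \cite{GordonJablonski:EinsteinSolvmanifoldsHaveMaximalSymmetry}: when $H$ is compact simple or semisimple of noncompact type, every isometry of a left-invariant metric fixing the identity is an automorphism, so $\Isom(H,g)<H\rtimes\Aut(H)$, and $\Aut(H)$ has finitely many components, whence Proposition~\ref{prop.normal}(ii) gives maximal symmetry. For statement (ii) I would first dispatch the simply-connected case: a completely solvable $H$ is left unchanged by the standard modification algorithm (as noted in the proof of Proposition~\ref{lem.s1}), hence is itself in standard position in $\Isom_0(H,g)$, and the unimodularity hypothesis forces the standard-position subgroup to be normal in the full isometry group (Gordon--Wilson \cite{GordonWilson:IsomGrpsOfRiemSolv}, cf. the remarks preceding Proposition~\ref{stdpos}). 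This yields $\Isom(H,g)<H\rtimes\Aut(H)$, and, $\Aut(H)$ having finitely many components in the simply-connected case, Proposition~\ref{prop.normal}(ii) applies. For a general connected completely solvable unimodular $H$ I would pass to the universal cover $\widetilde H$, apply the simply-connected conclusion to $(\widetilde H,\widetilde g)$, and descend the normality of $\widetilde H$ to obtain $\Isom_0(H,g)<H\rtimes\Aut(H)$, after which Proposition~\ref{prop.normal}(i) gives almost maximal symmetry.

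Statement (iii) is the new content. Put $G=\Isom_0(H,g)$, view $H$ as a simply-transitive subgroup, let $L<G$ be the compact isotropy subgroup with Lie algebra $\lf$, so that $\g=\h\oplus\lf$ as vector spaces. The goal is to show that $\h$ is an ideal, equivalently that $\lf$ normalizes $\h$; for then $G=H\rtimes L$ with $L$ acting by automorphisms, giving $\Isom_0(H,g)<H\rtimes\Aut(H)$. I would invoke the fine-structure theory of general transitive Riemannian isometry groups \cite{GordonWilson:TheFineStructureOfTransitiveRiemannianIsometryGroups}, together with Proposition~\ref{stdpos}, to produce a canonical transitive subalgebra $\hat\h$ normalized by $\lf$ (hence an ideal of $\g$) of which $\h$ is a modification in the sense of Definition~\ref{def.mod}, with modification map $\varphi\colon\hat\h\to\lf$ having abelian image and satisfying $[\varphi(\hat\h),\hat\h]<\hat\h$. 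Here the hypothesis that the Levi factor is of noncompact type is essential: it prevents the semisimple part of $\h$ from hiding in the compact isotropy, so that $\hat\h$ may be chosen with the same semisimple part as $\h$ and the modification twists only the radical through compactly embedded directions. The perfectness hypothesis then finishes the argument immediately: for $Y_1,Y_2\in\hat\h$ the bracket $[Y_1+\varphi Y_1,Y_2+\varphi Y_2]$ lies in $\hat\h$ (because $\varphi(\hat\h)$ is abelian and $[\varphi(\hat\h),\hat\h]<\hat\h$), so $[\h,\h]\subseteq\h\cap\hat\h=\ker\varphi$; since $[\h,\h]=\h$, we get $\varphi\equiv0$ and $\h=\hat\h$, which is normal.

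The step I expect to be the main obstacle is the structural one: producing the normal base $\hat\h$ and verifying that $\h$ is a modification of it of the special form above (abelian image in a compactly embedded subalgebra, with $[\varphi(\hat\h),\hat\h]<\hat\h$). In the purely solvable setting this is exactly the standard-position machinery of Gordon--Wilson, but since $(H,g)$ need not be a solvmanifold when $H$ has a noncompact semisimple factor, I must work within the general fine-structure theory and use the noncompact-type hypothesis to rigidify the semisimple part and confine the modification to the radical. Once this is in place, the short bracket computation driven by $[\h,\h]=\h$ completes the proof.
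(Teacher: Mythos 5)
Your proposal follows essentially the same route as the paper: everything is funneled through Proposition~\ref{prop.normal}, and the only real work is verifying the normality hypothesis $\Isom_0(H,g)<H\rtimes\Aut(H)$ in each case. For (i) and (ii) your treatment matches the paper's (which simply cites \cite{GordonJablonski:EinsteinSolvmanifoldsHaveMaximalSymmetry} and \cite{GordonWilson:IsomGrpsOfRiemSolv}). For (iii), where the paper disposes of the normality hypothesis by citing \cite[Theorem 5.1]{Gordon:NilpRad}, you instead sketch the internals; note that $[\h,\h]=\h$ forces $\rad(\h)=\nilrad(\h)$, which is exactly why that reference (on transitive isometry groups with nilpotent radicals) applies. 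Your closing bracket computation --- that $[\h,\h]\subset\hat\h$, hence $[\h,\h]\subset\ker\varphi$, hence perfectness forces $\varphi\equiv 0$ --- is correct and is indeed the mechanism by which the hypotheses are used. However, the step you flag as the main obstacle, namely producing a normal transitive subalgebra $\hat\h$ of which $\h$ is a modification with abelian, compactly embedded image normalizing $\hat\h$, is not something you can simply ``invoke'' from the general fine-structure theory: for arbitrary transitive groups it fails (Ozeki's compact examples in Example~\ref{ex: ozeki} show left translations need not be a modification of anything normal), and establishing it under the noncompact-type and nilpotent-radical hypotheses is precisely the content of the theorem the paper cites. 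So your argument is sound only modulo re-proving that cited result; as written it replaces a citation with an acknowledged black box rather than with a proof.
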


\begin{proof}
  \cite{GordonWilson:IsomGrpsOfRiemSolv}, respectively \cite[Theorem 5.1]{Gordon:NilpRad}, show that the hypothesis of Proposition~\ref{prop.normal}(i) holds for completely solvable unimodular Lie groups, respectively for connected Lie groups satisfying the conditions of statement (iii).  The corollary follows.

\end{proof}

\begin{prop} Let $H$ be a connected Lie group.  If the solvable radical $\rad{H}$ is nilpotent and if the maximal normal compact subgroup of a Levi factor of $H$ commutes with $\rad{H}$, then $H$ admits infinitesimally maximally symmetric metrics.   In particular, every connected semisimple Lie group admits infinitesimally maximally symmetric metrics.

\end{prop}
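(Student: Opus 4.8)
The plan is to use the two hypotheses to split off the compact semisimple part of $H$ --- which is the sole obstruction to \emph{almost} maximal symmetry --- dispose of it by the ``doubling'' phenomenon that already makes bi-invariant metrics on compact semisimple groups infinitesimally maximally symmetric, and treat the remaining directions using the rigidity forced by the nilpotency of $\rad(H)$. Since the Lie algebra of Killing fields is a local invariant, I would pass freely to the universal cover, so that $\cisom(H,g)$ may be computed on the simply-connected model.

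\emph{Structural splitting.} Fix a Levi decomposition $\h=\q\oplus\n$ with $\q$ semisimple and $\n=\rad(\h)$, which by hypothesis is nilpotent, so $\n=\nilrad(\h)$. Write $\q=\q^{c}\oplus\q^{nc}$ as the sum of its compact and noncompact simple ideals. The assumption that the maximal normal compact subgroup of the Levi factor commutes with $\rad(H)$ says exactly that $[\q^{c},\n]=0$; since distinct simple ideals commute, $\q^{c}$ also centralizes $\q^{nc}$, hence $\q^{c}$ is an ideal centralizing $\h':=\q^{nc}\ltimes\n$. Therefore $\h=\q^{c}\oplus\h'$ is a Lie algebra direct sum of ideals, and, up to covering, $H$ is a direct product $H^{c}\times H'$ in which $H^{c}$ is compact semisimple while $H'$ has nilpotent radical and a Levi factor of noncompact type (or trivial).

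\emph{The two factors.} For $H^{c}$ I would take a bi-invariant metric $g_{0}^{c}$ and invoke the fact recorded in the introduction that bi-invariant metrics on compact semisimple Lie groups are infinitesimally maximally symmetric (the mechanism being that any $\cisom(H^{c},g)$ embeds in the ``double'' $\q^{c}\oplus\q^{c}$ of left and right translations; this is also why one cannot hope for more than infinitesimal symmetry here, cf.\ Example~\ref{ex: ozeki}). For $H'$ the decisive point is that, the compact simple ideals having been removed, the nilpotency of $\rad(H')$ together with the absence of compact normal Levi factors confines the isometry group of \emph{every} left-invariant metric to the controlled ``standard position'' form of the structure theory of \cite{GordonWilson:IsomGrpsOfRiemSolv} and \cite{Gordon:NilpRad}. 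When $\q^{nc}=0$ the group $H'$ is nilpotent, hence completely solvable and unimodular, and Corollary~\ref{cor.pos}(ii) applies; in the presence of $\q^{nc}$ I would use the structure theory of \cite{Gordon:NilpRad} to produce a metric $g_{0}'$ whose isometry algebra contains $\h'\rtimes\uf'$ for a maximal compact subalgebra $\uf'$ of $\Der(\h')$, together with the Iwasawa directions of $\q^{nc}$, and to show that this $g_{0}'$ is infinitesimally maximally symmetric on $H'$.

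\emph{Assembly and the main obstacle.} Finally I would take the product metric $g_{0}=g_{0}^{c}\oplus g_{0}'$; because $(H^{c},g_{0}^{c})$ has no Euclidean de Rham factor, the de Rham decomposition gives $\cisom(H,g_{0})=(\q^{c}\oplus\q^{c})\oplus\cisom(H',g_{0}')$, so it suffices to split an arbitrary isometry algebra into a ``compact excess'' absorbed by the doubling and a ``noncompact/solvable backbone'' absorbed by $\cisom(H',g_{0}')$. \emph{The hard part is exactly this last step.} A general $g\in\Lc(H)$ is not a product metric, so $\cisom(H,g)$ need not split as a direct sum, and since $H$ need not be normal in $\Isom_{0}(H,g)$ the isotropy does not act on $\h$ by derivations (Remark~\ref{rem.contrast}); thus one cannot simply read off the embedding. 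The crux is to show, via the standard-position/modification machinery and the nilpotent-radical hypothesis, that no genuinely mixed simple factor can appear, that the noncompact Levi factor and the nilradical of $\cisom(H,g)$ are always the ones already carried by $H'$, and that every additional compact simple ideal is (abstractly) a copy of a subalgebra related to $\q^{c}$ and hence is taken up by $\q^{c}\oplus\q^{c}$. The ``in particular'' for semisimple $H$ is the special case $\n=0$: Corollary~\ref{cor.pos}(i) supplies maximally symmetric metrics on the noncompact and compact-simple factors, and the doubling handles the compact semisimple factors.
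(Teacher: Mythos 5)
Your proposal contains a genuine gap, and you name it yourself: the ``assembly'' step. Splitting $\h=\q^{c}\oplus\h'$ and choosing a product metric $g_0$ is fine, but the whole content of the proposition is to embed $\cisom(H,g)$ into $\cisom(H,g_0)$ for an \emph{arbitrary} $g\in\Lc(H)$, and your sketch stops exactly where that argument would have to begin (``the crux is to show \dots that no genuinely mixed simple factor can appear\dots''). Nothing in the modification/standard-position machinery you cite applies directly here, since that theory is developed for solvmanifolds. The missing ingredient is precisely the result the paper invokes: Theorem 4.1 of \cite{Gordon:NilpRad}, whose hypotheses are verbatim those of the proposition, and which asserts that for \emph{every} $g\in\Lc(H)$ the group $\Isom_0(H,g)$ contains a connected \emph{normal} transitive subgroup $H'$ isomorphic to $H$, so that $\Isom_0(H,g)=H'L$ with $L$ the isotropy group. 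Granting this, the proof is three lines and needs no decomposition of $H$ at all: $L$ acts on $H'$ by isometric automorphisms, hence is isomorphic to a subgroup $K'$ of a fixed maximal compact connected subgroup $K<\Aut(H)$, and for any $K$-invariant $g_0\in\Lc(H)$ one gets $\cisom(H,g)\simeq\h\rtimes\lf\simeq\h\rtimes\kf'<\h\rtimes\kf<\cisom(H,g_0)$. Note that the isomorphism $H\simeq H'$ is only abstract (this is exactly the Ozeki phenomenon), which is why the conclusion is infinitesimal rather than almost maximal symmetry. Your treatment of the ``in particular'' for semisimple $H$ has the same gap: for a product of compact and noncompact factors the isometry algebra of a general left-invariant metric does not obviously split along the factors, so Corollary~\ref{cor.pos}(i) plus ``doubling'' does not assemble into a proof without the normality theorem.

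A secondary issue: you cannot ``pass freely to the universal cover.'' Killing fields of $(H,g)$ lift to $(\widetilde H,\widetilde g)$, giving $\cisom(H,g)\hookrightarrow\cisom(\widetilde H,\widetilde g)$, but the reverse containment can fail (already for flat tori), so an embedding $\cisom(\widetilde H,\widetilde g)\hookrightarrow\cisom(\widetilde H,\widetilde g_0)$ does not by itself land you in $\cisom(H,g_0)$, which may be strictly smaller. The paper's argument works directly on $H$ and avoids this.
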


\begin{proof} Let $K$ be a maximal connected compact subgroup of $\Aut(H)$ and let $g_0$ be a left-invariant metric on $H$ that is also $K$ invariant.  We show that $g_0$ is infinitesimally maximally symmetric.  Let $g\in \Lc(H)$.  By \cite[Theorem 4.1]{Gordon:NilpRad}, there exists a connected normal subgroup $H'$ of $\Isom_0(H,g)$ such that $H\simeq H'$ and $\Isom_0(H,g)=H'L$, where $L$ is the isotropy group.  In particular, $H'$ inherits a left-invariant metric that we again denote by $g$, and we have $L<\Aut_\orth(H', g')$ where $g'$ is the left-invariant metric on $H'$ induced by $g$.  By the conjugacy of maximal compact subgroups of $\Aut(H)$ and the fact that $H\simeq H'$, the isotropy group $L$ is isomorphic to a subgroup $K'$ of $K$ and 
  $$\cisom(H,g)\simeq \cisom(H',g) =\h'\rtimes \lf \simeq \h \rtimes \kf' <\cisom(H,g_0)$$
  and the proposition follows. \end{proof}

\subsection{Comparing the three types of maximal symmetry}

Observe that
$$\mbox{Max\, symmetry} \implies  \mbox{Almost \, max\, symmetry} \implies \mbox{Inf \,max\, symmetry}.$$

Neither converse implication holds in full generality.    There are two ways that the converse of the first implication can fail.   
\begin{enumerate} \item\label{type one} A Lie group $H$ that admits metrics of maximal symmetry may admit a strictly larger collection of almost maximally symmetric metrics.  
\item\label{type two}  A Lie group may admit almost maximally symmetric metrics but no maximally symmetric ones
\end{enumerate}
Examples of the first type are Lie groups $H$ that satisfy the hypothesis of Proposition~\ref{prop.normal}(i) and whose automorphism group contains more than one but only finitely many components.  Let $L$ be a maximal compact subgroup of $\Aut(H)$ and $L_0$ its identity component.   Left-invariant metrics that are $L_0$-invariant but not $L$-invariant are almost maximally symmetric but not maximally symmetric.     The following example is of the second type.

 \begin{example}\label{exa: torus}
Consider the torus $T=\R^2/\Z^2$.   Every $g\in \Lc(T)$ is flat and satisfies $\Isom_0(T,g)=T$.  Thus every left-invariant metric is almost maximally symmetric.   However, it is easy to see that $T$ does not admit any maximally symmetric left-invariant metric.

 \end{example}
 
 We next compare infinitesimal maximal symmetry with almost maximal symmetry.   Behavior analogous to that in~(\ref{type one}) cannot occur:
 
 \begin{prop}\label{no type one}
 Let $H$ be a connected Lie group that admits an almost maximally symmetric left-invariant metric $g_0$.  Then every infinitesimally maximally symmetric metric on $H$ is almost maximally symmetric.
 \end{prop}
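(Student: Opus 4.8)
The plan is to use the almost maximal symmetry of $g_0$ to force the dimension of the isometry algebra of $g_1$ to match that of $g_0$, then to upgrade a one-sided inclusion of identity components into an equality, after which the conjugating automorphisms compose in the obvious way.

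First I would feed the metric $g_1$ itself into the almost maximal symmetry of $g_0$: this produces $\Phi_1\in\Aut(H)$ with $\Isom_0(H,g_1)<\Phi_1\,\Isom_0(H,g_0)\Phi_1^{-1}$, and in particular $\dim\cisom(H,g_1)\le\dim\cisom(H,g_0)$. For the opposite inequality I would invoke that $g_1$ is infinitesimally maximally symmetric, applied to $g=g_0$: then $\cisom(H,g_0)$ is abstractly isomorphic to a subalgebra of $\cisom(H,g_1)$, whence $\dim\cisom(H,g_0)\le\dim\cisom(H,g_1)$. Combining the two gives $\dim\Isom_0(H,g_0)=\dim\Isom_0(H,g_1)$.

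The crux is then to promote the inclusion $\Isom_0(H,g_1)<\Phi_1\,\Isom_0(H,g_0)\Phi_1^{-1}$ to an equality. Both sides are connected (the right-hand side being the image of a connected group under conjugation by $\Phi_1$), and $\Isom_0(H,g_1)$, being an identity component of an isometry group, sits inside $\Phi_1\,\Isom_0(H,g_0)\Phi_1^{-1}$ as a closed, hence Lie, subgroup. Since it has full dimension there, it is an open subgroup of a connected group and therefore coincides with it. Thus $\Isom_0(H,g_0)=\Phi_1^{-1}\,\Isom_0(H,g_1)\Phi_1$.

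Finally I would check the definition of almost maximal symmetry directly for $g_1$. Given an arbitrary $g\in\Lc(H)$, almost maximal symmetry of $g_0$ yields $\Phi_g\in\Aut(H)$ with $\Isom_0(H,g)<\Phi_g\,\Isom_0(H,g_0)\Phi_g^{-1}$; substituting the identity from the previous step gives $\Isom_0(H,g)<(\Phi_g\Phi_1^{-1})\,\Isom_0(H,g_1)(\Phi_g\Phi_1^{-1})^{-1}$, so that $\Psi:=\Phi_g\Phi_1^{-1}\in\Aut(H)$ witnesses almost maximal symmetry of $g_1$. I expect the only delicate point to be the passage from inclusion to equality of the identity components: one must be sure that $\Isom_0(H,g_1)$ is a genuine (closed) Lie subgroup of the conjugated group $\Phi_1\,\Isom_0(H,g_0)\Phi_1^{-1}$, rather than merely an abstract subgroup, so that the equal-dimension/open-subgroup argument applies. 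This rests on the standard fact that isometry groups are Lie transformation groups of $H$ and that one isometry group contained in another is closed in it.
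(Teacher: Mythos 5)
Your proposal is correct and follows essentially the same route as the paper: both arguments extract $\dim\cisom(H,g_1)=\dim\cisom(H,g_0)$ from the two one-sided containments (the group inclusion coming from almost maximal symmetry of $g_0$, and the abstract embedding coming from infinitesimal maximal symmetry of $g_1$), promote the inclusion of connected isometry groups of equal dimension to an equality, and then conjugate by the composite automorphism. The closedness point you flag is the standard fact the paper leaves implicit, and your treatment of it is fine.
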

 
 \begin{proof}
 Suppose that $g\in\Lc(H)$ is also infinitesimally maximally symmetric.   Then each of $\cisom(H,g)$ and $\cisom(H,g_0)$ is isomorphic to a subalgebra of the other, so 
 	$$\cisom(H,g)\simeq \cisom(H,g_0).$$   Since also $\Isom_0(H,g)<\Phi\,\Isom_0(H,g_0)\Phi^{-1}$ for some $\Phi\in \Aut(H)$, we must have 
 	$$\Isom_0(H,g)=\Phi\,\Isom_0(H,g_0)\Phi^{-1}$$ 
and thus $g$ is also almost maximally symmetric. 

  \end{proof}
 
The following example shows that some Lie groups admit infinitesimally maximally symmetric but no almost maximally symmetric ones.   Later in this subsection, we will see a similar phenomenon exhibited by some completely solvable simply-connected Lie groups.

\begin{example}\label{ex: ozeki} 
Let $H$ be a semisimple compact connected Lie group and let $g_0$ be a bi-invariant Riemannian metric on $H$.  In the notation of \ref{isomgrps}, we have $\cisom(H,g_0)=\lambda_*^{g_0}(\h)\times \rho_*^{g_0}(H)\simeq \h\times \h$, where $\lambda^{g_0}$ is the group of left translations as in Notation~\ref{isomgrps} and $\rho^{g_0}(H)$ is the analogously defined group of right translations.  Ozeki \cite{Oze77} showed that for every $g\in \Lc(H)$, there exists an injective homomorphism  $\alpha: \cisom(H,g)\to \h\times \h$.  It follows that $g_0$ is infinitesimally maximally symmetric.   However, Ozeki gave examples of semisimple compact connected Lie groups $H$ that admit left-invariant metrics $g$ for which the group of left translations $\lambda^g(H)$ is \emph{not} normal in $\Isom_0(H,g)$.  In this case, $\Isom_0(H,g)\not\subset \Phi\Isom(H,g_0)\Phi^{-1}$ for any $\Phi\in \Aut(H)$, since conjugation by $\Phi$  normalizes $\lambda^g(H)$.  Therefore $g_0$ is not almost maximally symmetric.  Moreover, by Proposition~\ref{no type one}, $H$ cannot admit any almost maximally symmetric left-invariant metric.    Aside:   For simple compact Lie groups, this type of behavior does not occur and the bi-invariant metric is maximally symmetric.
  \end{example}
  
  \begin{remark}\label{comp solv alpha}~\begin{enumerate}
  \item  Suppose that $g_0$ is an infinitesimally maximally symmetric left-invariant metric on a Lie group $H$.   By Definition~\ref{def:max sym}, for every $g\in \Lc(H)$, there exists a monomorphism $\alpha:\cisom(H,g)\to\cisom(H,g_0))$.   Observe that a necessary condition for $g_0$ to be almost maximally symmetric is that the monomorphism $\alpha$ can be chosen to satisfy
  \begin{equation}\label{eq: strong alpha} \alpha(\lambda^g_*(\h))=\lambda^{g_0}_*(\h).\end{equation}
This condition clearly fails for the metric $g$ in Example~\ref{ex: ozeki}.

\item In contrast to the behavior in Example \ref{ex: ozeki}, if $g_0$ is an infinitesimally maximally symmetric metric on a simply-connected completely solvable Lie group $S$, then for any $g\in\Lc(S)$, the monomorphism $\alpha:\cisom(S,g)\to \cisom(S,g_0)$ can be chosen to satisfy~\eqref{eq: strong alpha}.    Indeed start with any choice of $\alpha$.  
Since  $\alpha(\lambda^g_*(\s))$ is a completely solvable subalgebra of $\cisom(S,g_0)$ isomorphic to $\s$, Proposition~\ref{lem.s1} and the uniqueness up to conjugacy of solvable subgroups in standard position in $\Isom(S,g_0)$ (see Proposition~\ref{stdpos}(iii)) enables us to replace $\alpha$ by its composition with a suitable automorphism of $\cisom(S,g_0)$ so that Equation~\ref{eq: strong alpha} holds.
\end{enumerate}
\end{remark}

In spite of Remark~\ref{comp solv alpha}, we will see in Example~\ref{type II example} that there exist simply-connected completely solvable Lie groups that admit infinitesimally maximally symmetric metrics but no almost maximally symmetric ones.    We first show, however, that this phenomenon can only occur if $S$ has non-trivial center.

\begin{prop}\label{comp solv inf implies almost}
Let $S$ be a simply-connected completely solvable Lie group.    If $\s$ has trivial center, then every infinitesimally maximally symmetric left-invariant Riemannian metric on $S$ is almost maximally symmetric.

\end{prop}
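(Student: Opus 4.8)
The plan is to begin from the monomorphism supplied by infinitesimal maximal symmetry, normalize it so that it fixes the simply-transitive factor, and then promote the resulting abstract inclusion of isometry algebras to an inclusion of isometry groups. The decisive extra leverage coming from the two hypotheses, complete solvability and trivial center, is a vanishing-centralizer lemma that rules out exactly the Ozeki-type pathology of Example~\ref{ex: ozeki}.

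Fix $g\in\Lc(S)$ and write $G=\Isom_0(S,g)$, $G_0=\Isom_0(S,g_0)$. Since $g_0$ is infinitesimally maximally symmetric and $S$ is simply-connected and completely solvable, Remark~\ref{comp solv alpha}(2) furnishes a monomorphism $\alpha\colon\cisom(S,g)\to\cisom(S,g_0)$ with $\alpha(\lambda^g_*(\s))=\lambda^{g_0}_*(\s)$. Thus $\theta:=(\lambda^{g_0}_*)^{-1}\circ\alpha\circ\lambda^g_*$ is an automorphism of the abstract Lie algebra $\s$, which integrates to some $\Psi\in\Aut(S)$ since $S$ is simply-connected. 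By Remark~\ref{rem.contrast}(1) it suffices, for each fixed $g$, to produce an automorphism $\Psi$ with $\Isom_0(S,g)<\Isom_0(S,\Psi^*g_0)$. Replacing $g_0$ by a suitable pullback $\Psi^*g_0$ in its $\Aut(S)$-orbit (renaming it $g_0$, and composing $\alpha$ with the induced automorphism of the isometry algebra), I reduce to the situation $\alpha(\lambda^g_*(X))=\lambda^{g_0}_*(X)$ for all $X\in\s$; that is, $\alpha$ fixes the left-invariant vector fields. Let $G'<G_0$ be the connected subgroup with Lie algebra $\alpha(\cisom(S,g))$. Since $\alpha$ fixes $\lambda^{g_0}_*(\s)$, the group $G'$ contains $\lambda^{g_0}(S)$ and so acts transitively on $S$ by isometries of $g_0$.

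The key lemma is that the centralizer of $\s$ inside \emph{any} such isometry algebra is trivial: for every $h\in\Lc(S)$ one has $\mathfrak{c}_{\cisom(S,h)}(\lambda^h_*(\s))=0$. Indeed, a vector field commuting with all left-invariant fields is right-invariant, so a centralizing Killing field has the form $Y^R$ for some $Y\in\s$; and the right translations $R_{\exp tY}=L_{\exp tY}\circ C_{\exp(-tY)}$ are $h$-isometries precisely when $\ad(Y)$ is skew-symmetric for the inner product defined by $h$. Complete solvability forces $\ad(Y)$ to have only real eigenvalues, while skew-symmetry forces purely imaginary eigenvalues, so $\ad(Y)=0$ and hence $Y\in Z(\s)=0$. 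This is exactly where both hypotheses enter, and exactly what breaks in Example~\ref{ex: ozeki}, where $\s$ is replaced by a compact semisimple algebra and $\ad(Y)$ can be a nonzero skew map. With vanishing centralizers in hand, the goal is to show that $\alpha$ agrees with the tautological inclusion $\cisom(S,g)\hookrightarrow\mathfrak{X}(S)$, i.e. $\alpha(V)=V$ as vector fields; this yields $\cisom(S,g)=\alpha(\cisom(S,g))<\cisom(S,g_0)$, hence $G=G'<G_0$, finishing the proof. Setting $\delta:=\mathrm{id}-\alpha$, one checks $\delta|_{\s}=0$ and $[\lambda^g_*(X),\delta(V)]=\delta([\lambda^g_*(X),V])$ for all $X\in\s$, so $\delta$ is an $\s$-equivariant map killing the transitive factor; the vanishing-centralizer lemma, applied in $\cisom(S,g)$, in $\cisom(S,g_0)$, and on right-invariant fields in $\mathfrak{X}(S)$, is then used to force $\delta=0$.

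The main obstacle is precisely this last step. It would be immediate if $\lambda^g_*(\s)$ were an ideal, for then $\delta$ would land in the centralizer of $\s$ and vanish at once; but $S$ is in general \emph{not} normal in its isometry group, since the Iwasawa factor $A_1N_1$ of a noncompact semisimple Levi factor (Proposition~\ref{lem.s1}) is normalized only by $MA_1N_1$, not by $K_1$. To control the isotropy directions I would decompose $\cisom(S,g)$ along the standard-position structure of Propositions~\ref{lem.s1} and~\ref{lem.s2}, treating the semisimple part, the torus $T$, and the nilradical $\n$ separately, and combine the triviality of the centralizer of $\s$ with the faithfulness of the isotropy representation to match the two actions direction by direction. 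This structural bookkeeping, rather than any single estimate, is where the real work lies.
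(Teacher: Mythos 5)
There is a genuine gap, and you have in fact flagged it yourself: the entire content of the proposition is the passage from an abstract monomorphism of isometry \emph{algebras} to an inclusion of isometry \emph{groups as transformation groups}, and that is exactly the step you defer to ``structural bookkeeping'' in your final paragraph. Your vanishing-centralizer lemma is correct (the centralizer of $\lambda^h_*(\s)$ in $\cisom(S,h)$ sits inside the normalizer $\s\rtimes\Der_{\sk}(\s,h)$, and a skew derivation equal to $-\ad(Y)$ with $Y\in\s$ forces $\ad(Y)=0$ by the real-vs-imaginary eigenvalue clash, hence $Y\in Z(\s)=0$), but it cannot force $\delta:=\mathrm{id}-\alpha$ to vanish. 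For $V$ in the isotropy algebra $\lf$, the field $\delta(V)=V-\alpha(V)$ is a difference of a $g$-Killing field and a $g_0$-Killing field; it lies in neither isometry algebra, so the only place you could apply a centralizer statement to it is $\mathfrak{X}(S)$ --- and there the centralizer of the right-invariant fields is the full $n$-dimensional space of left-invariant fields, not $\{0\}$. The $\s$-equivariance identity $[\lambda^g_*(X),\delta(V)]=\delta([\lambda^g_*(X),V])$ is correct but an $\s$-equivariant map $\lf\to\mathfrak{X}(S)$ killing $\s$ has no reason to vanish, so the mechanism you propose does not close.

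The paper takes a different route at precisely this point. After the same normalization via Remark~\ref{comp solv alpha}, it does \emph{not} try to make $\alpha$ the identity on vector fields. Instead it uses the hypotheses (completely solvable, trivial center) to show that the only compactly embedded subalgebra of $\s$ is $\{0\}$, whence both $\alpha(\lf')$ and $\lf\cap\h$ are \emph{maximal} compactly embedded subalgebras of $\h=\alpha(\g')$ and hence conjugate by an element that can be taken in $S_g$; after this conjugation one has $\alpha(\lf')\subset\lf$. The promotion to groups is then done by Lemma~\ref{effect kernel}: lift $\alpha$ to universal covers, observe that it carries the effective kernel of one isotropy action into the other, descend to a genuine group monomorphism $\hat\alpha\colon G'\to G$, and read off the automorphism $\Phi$ from the induced equivariant diffeomorphism of the two homogeneous presentations. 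If you want to salvage your approach, you should aim not at $\delta=0$ but at the weaker statement $\alpha(\lf)\subset\lf_0$ (after conjugation), and then you still need an argument at the level of groups --- the covering/effective-kernel step --- since an inclusion of isotropy subalgebras alone does not yet identify the two actions.
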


\begin{lemma}\label{effect kernel} We use the notation of Definition~\ref{def:max sym}.
Let $H$ be a simply-connected Lie group, let $g, g'\in \Lc(H)$, and let $L$ and $L'$ be the isotropy subgroups of $\Isom_0(H,g)$ and $\Isom_0(H,g')$, respectively, at the identity $e\in R$.   Suppose that there exists an injective homomorphism $\alpha: \cisom(H,g')\to \cisom(H,g)$ satisfying 
\begin{enumerate}
\item $\alpha(\lambda_*^{g'}(\h))=\lambda_*^{g}(\h)$, where $\lambda^g$ and $\lambda^{g'}$ are defined as in Notation~\ref{isomgrps};
\item $\alpha(\lf')<\lf$, where $\lf'$ and $\lf$ are the Lie algebras of $L'$ and $L$, respectively.
\end{enumerate}.  
Then there exists an automorphism $\Phi$ of $H$ such that $\Isom_0(H,g')  < \Phi\Isom(H,g)\Phi^{-1}$.
\end{lemma}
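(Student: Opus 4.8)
The plan is to produce the automorphism $\Phi$ from hypothesis (i) and then to show that \emph{every} Killing field of $g'$ is a Killing field of the single metric $\hat g:=\Phi_0^{\,*}g$ lying in the $\Aut(H)$-orbit of $g$; this is equivalent to the desired inclusion. First I record that, as vector fields on $H$, both $\lambda^{g'}_*(\h)$ and $\lambda^{g}_*(\h)$ are exactly the space of right-invariant vector fields, so hypothesis (i) says $\alpha$ restricts to a Lie algebra isomorphism $\lambda^{g'}_*(\h)\to\lambda^{g}_*(\h)$; transporting through $\lambda^{g'}_*$ and $\lambda^{g}_*$ gives an automorphism $\phi\in\Aut(\h)$, which (since $H$ is simply-connected) integrates to $\Phi_0\in\Aut(H)$ with $d(\Phi_0)_e=\phi$. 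Set $\hat g:=\Phi_0^{\,*}g$; it is again left-invariant, $\hat g_e=\phi^{*}g_e$, and $\Isom(H,\hat g)=\Phi_0^{-1}\Isom(H,g)\Phi_0$. Thus it suffices to prove $\cisom(H,g')<\cisom(H,\hat g)$ as Lie algebras of vector fields, for then $\Isom_0(H,g')<\Isom(H,\hat g)=\Phi_0^{-1}\Isom(H,g)\Phi_0$ and we may take $\Phi=\Phi_0^{-1}$.

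Because $\cisom(H,g')=\lambda^{g'}_*(\h)\oplus\lf'$ and the right-invariant fields $\lambda^{g'}_*(\h)$ are automatically Killing for the left-invariant metric $\hat g$, the whole statement reduces to showing $\mathscr L_Z\hat g=0$ for every $Z\in\lf'$. The key local input is that the infinitesimal linearization of such a $Z$ is skew-symmetric with respect to $\hat g_e$. To see this, note that for $Z$ vanishing at $e$ the endomorphism $X\mapsto [Z,\lambda^{g'}_*(X)]_e$ of $\h$ is connection-independent and records the isotropy representation $\rho'_Z$ of $Z$; since $Z$ is $g'$-Killing, $\rho'_Z$ is skew with respect to $g'_e$. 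Applying the homomorphism $\alpha$ and using (i) and (ii), the identity $\alpha([Z,\lambda^{g'}_*(X)])=[\alpha(Z),\lambda^{g}_*(\phi X)]$ forces $\rho_{\alpha(Z)}=\phi\,\rho'_Z\,\phi^{-1}$, where $\rho_{\alpha(Z)}$ is the isotropy representation of $\alpha(Z)\in\lf$. As $\alpha(Z)$ is $g$-Killing, $\rho_{\alpha(Z)}$ is skew with respect to $g_e$; pulling this back by $\phi$ shows $\rho'_Z$ is skew with respect to $\phi^{*}g_e=\hat g_e$. Since the linearization of $Z$ computed with the $\hat g$-connection agrees with the connection-independent $\rho'_Z$ (up to sign), we conclude $(\mathscr L_Z\hat g)_e=0$ for every $Z\in\lf'$.

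It remains to propagate this vanishing from $e$ to all of $H$, and this is the step I expect to be the main obstacle, since $\lambda^{g'}(H)$ need not be normal in $\Isom_0(H,g')$ (so elements of $\lf'$ need not act by automorphisms). Fix $Z\in\lf'$ and put $T:=\mathscr L_Z\hat g$, so $T_e=0$. For $h\in H$ the left translation $L_h$ is a $\hat g$-isometry, whence $L_h^{*}T=\mathscr L_{(L_{h^{-1}})_*Z}\hat g$. Crucially, $(L_{h^{-1}})_*Z$ is the push-forward of a $g'$-Killing field by the $g'$-isometry $L_{h^{-1}}$, hence is again $g'$-Killing, i.e.\ lies in $\cisom(H,g')=\lambda^{g'}_*(\h)\oplus\lf'$; writing $(L_{h^{-1}})_*Z=V^{R}+W$ with $V^{R}$ right-invariant and $W\in\lf'$, the right-invariant part contributes $\mathscr L_{V^{R}}\hat g=0$ identically, while $\mathscr L_{W}\hat g$ vanishes at $e$ by the previous paragraph. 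Therefore $(L_h^{*}T)_e=0$, which is exactly the statement $T_h=0$. As $h$ was arbitrary, $T\equiv 0$, so $Z$ is $\hat g$-Killing. This gives $\cisom(H,g')<\cisom(H,\hat g)$ and completes the argument.
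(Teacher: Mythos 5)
Your argument is correct, but it takes a genuinely different route from the paper's. The paper works at the group level: it lifts $\alpha$ to a homomorphism $\widetilde{\alpha}$ between the universal covers of $G'=\Isom_0(H,g')$ and $G=\Isom_0(H,g)$, checks that $\widetilde{\alpha}$ carries the effective kernel of the action on $\tG'/\tL'$ into that of the action on $\tG/\tL$, and so descends to a genuine monomorphism $\hat{\alpha}:G'\to G$ with $\hat{\alpha}_*=\alpha$; the induced equivariant diffeomorphism $G'/L'\to G/L$ then produces $\Phi$ and the metric comparison in one stroke. You instead stay at the level of Killing fields: you extract $\Phi_0$ from hypothesis (i) alone and then show $\cisom(H,g')<\cisom(H,\Phi_0^{*}g)$ by a tensorial argument --- skew-symmetry of the intertwined isotropy representations gives vanishing of the Lie derivative at $e$, and the observation that $(L_{h^{-1}})_*Z$ is again a $g'$-Killing field propagates the vanishing over all of $H$ by homogeneity. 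Your route is more elementary and avoids all covering-space and effective-kernel bookkeeping; the paper's route yields the stronger intermediate fact that the abstract monomorphism $\alpha$ integrates to a monomorphism of the isometry groups themselves. Two points you should make explicit when writing this up: first, the identity $\rho_{\alpha(Z)}=\phi\,\rho'_Z\,\phi^{-1}$ is obtained by comparing $\lambda_*(\h)$-components in the direct sums $\cisom(H,g')=\lambda^{g'}_*(\h)\oplus\lf'$ and $\cisom(H,g)=\lambda^{g}_*(\h)\oplus\lf$ (this is precisely where hypothesis (ii) is used, since one cannot ``evaluate $\alpha$ at $e$'' --- $\alpha$ is only an abstract homomorphism); second, the final passage from the inclusion of Killing algebras to $\Isom_0(H,g')<\Isom(H,\hat g)$ uses that the connected isometry group is generated by the (complete) flows of its Killing fields.
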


\begin{proof}[Proof of Lemma~\ref{effect kernel}]
Write $G=\Isom_0(H,g)$ and $G'=\Isom_0(H,g')$, let $\tG$ and $\tG'$ be the simply-connected covers of $G$ and $G'$ and let $\widetilde{L}<\tG$ and $\widetilde{L'}<\tG'$ be the connected subgroups corresponding to $\lf$ and $\lf'$.  The monomorphism $\alpha$ lifts to a monomorphism $\widetilde{\alpha}: \tG'\to\tG$ carrying $\tL'$ into $\tL$.   We have $\tG=\tilde{\lambda}^{g'}(H)\tL$ and $\tG'=\tilde{\lambda}^{g}(H)\tL'$, and each of the intersections $H\cap L$, $H\cap \tL$, etc. are trivial.   Thus $\widetilde{\alpha}$ induces a diffeomorphism $\tG'/\tL'\to \tG/\tL$ intertwining the left action $\widetilde{\rho'}$ of $\tG'$ on $\tG'/\tL'$ with the left action $\widetilde{\rho}\circ\widetilde{\alpha}$ of $\tG'$ on $\tG/\tL$.   In particular, $\widetilde{\alpha}$ maps the effective kernel $D'<\tL'$ of $\widetilde{\rho'}$ into the effective kernel $D$ of $\widetilde{\rho}$.  Since $G'=\tG'/D'$ and $G=\tG/D$, the map $\widetilde{\alpha}$ descends to a monomorphism $\hat{\alpha}:G'\to G$ with $\hat{\alpha}_*=\alpha$.    The map $\hat{\alpha}$ induces a diffeomorphism $\tau:G'/L'\to G/L$ intertwining the left actions of $G'$ and $G$, and in particular, intertwining the simply-transitive actions of $\lambda^{g'}(H)$ and $\lambda^g(H)$.  The diffeomorphism $\tau$ corresponds to the automorphism $\Phi$ of $H$ defined by the condition $$\hat{\alpha}\circ \lambda^{g'}= \lambda^g\circ \Phi.$$   The pullback to $G'/L'$ by $\tau$ of the Riemannian metric on $G/L$ defined by $g$ corresponds to the metric $\Phi^*g$ on $H$.    Hence we have $\Isom_0(H,g')=G'<\Isom(H,\Phi^*g)=\Phi\Isom(H,g)\Phi^{-1}$.   
\end{proof}

\begin{proof}[Proof of Proposition~\ref{comp solv inf implies almost}]  
Suppose $g$ is an infinitesimally maximally symmetric metric on $S$.  Let $g'\in \Lc(S)$.   Remark~\ref{comp solv alpha} guarantees the existence of a  monomorphism $\alpha:\cisom(S,g')\to \cisom(S,g)$ satisfying condition (i) of the lemma (with $S$ playing the role of $H$ in the lemma).   Write $G=\Isom_0(S,g)$ and $G'=\Isom_0(S,g')$.  For notational simplicity, write $S_g:=\lambda^g(S)<G$ and similarly for $S_{g'}<G'$.  Use the analogous notation for their Lie algebras.  Thus condition (i) says  $\alpha(\s_{g'})=\s_g$.  

We next address condition (ii).  Let $\lf$ and $\lf'$ be the isotropy subalgebras of $\g$ and $\g'$ at the base point.     Write $\h=\alpha(\g')<\g$ and let $H$ be the corresponding connected subgroup of $G$.   We have $H=(H\cap L)S_g$.    The hypotheses on $S$ imply that the only compactly embedded subalgebra of $\s$ is $\{0\}$ and  thus the same holds for its isomorphic copies $\s_{g'}$ and $\s_g$.  Hence both $\lf\cap \h$ and $\alpha(\lf')$ are maximal compactly embedded subalgebras of $\h$.   By the conjugacy of maximal compactly embedded subalgebras, there exists $a\in H$ such that $\Adop(a)(\lf\cap \h) =\alpha(\lf')$.  Since $\Adop_H(H\cap L)$ normalizes $\lf\cap \h$, we may choose $a$ to lie in $S_g$.   Extending the inner automorphism $I_a$ to $G$, we thus have $\alpha(\lf')<\Adop_G(a)(\lf)$.  Thus, after replacing $g$ by $I_a^*(g)$, we obtain condition (ii).   (This change has not affected condition (i) since $a\in S_g$.) The proposition now follows from the lemma.

\end{proof}

In contrast to Proposition~\ref{comp solv inf implies almost}, we next give examples of completely solvable simply-connected Lie groups $S$ that admit infinitesimally maximally symmetric metrics but no almost maximally symmetric ones.     Our examples are of two types:

\begin{itemize}
\item[Type I.]   There exist two infinitesimally maximally symmetric metrics $g_1$ and $g_2$ on $S$ (necessarily with isomorphic isometry algebras) such that $\Isom_0(S,g_1)$ is not isomorphic to $\Isom_0(S,g_2)$.   This condition implies that $S$ cannot admit an almost maximally symmetric metric.
\item[Type II.]  Up to automorphism and rescaling, $S$ admits a unique infinitesimally maximally symmetric metric $g$, but $g$ is not almost maximally symmetric.
\end{itemize}

Both constructions use the following:

\begin{lemma}\label{lem:Hermitian}
Let $\g$ be a Lie algebra.  Suppose the Levi decomposition $\g=\g_1+\g_2$ and Iwasawa decomposition $\g_1=\kf+\s_1$ satisfy:
\begin{itemize}
\item $\g_1$ is of noncompact type and $\kf$ has non-trivial center.
\item $\g_2$ is completely solvable.
\item The center $Z(\g)$ is non-trivial.
\end{itemize}
Let $S$ be the simply-connected Lie group with Lie algebra $\s_1\ltimes \g_2$.  Then there exist connected Lie groups $G$ and $G'$, both with Lie algebra $\g$, and compact subgroups $K$ of $G$ and $L$ of $G'$ such that:

\begin{enumerate}
\item $G$ is a linear Lie group but $G'$ is not.
\item $S$ acts simply transitively on both $G/K$ and $G'/L$.

\end{enumerate}
\end{lemma}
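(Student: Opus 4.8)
The plan is to realize both $G$ and $G'$ as quotients of the simply-connected group $\widetilde{G}$ with Lie algebra $\g$ by infinite cyclic central subgroups, distinguished only by how they interact with the center $Z(\g)$. First I would record the structural facts that drive everything. The hypothesis $Z(\kf)\neq 0$ says that $\g_1$ is of Hermitian type: choosing $0\neq\zeta\in Z(\kf)$, the one-parameter group $\exp(t\zeta)$ traces a nontrivial loop in a maximal compact subgroup of the linear form of $G_1$, so that $\gamma:=\exp_{\widetilde G}(2\pi\zeta)$ is a nontrivial element of the infinite cyclic subgroup of $Z(\widetilde G)$ generating $\pi_1$ of that linear form; in particular $\gamma$ has infinite order and is central in $\widetilde{G}$. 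Next, since $\g_1$ is semisimple we have $Z(\g)\subseteq\g_2$, so any $0\neq Z\in Z(\g)$ lies in $\s$, and because $\widetilde{G}$ is simply-connected its center has no compact factor (a central circle would force $\pi_1(T)\hookrightarrow\pi_1(\widetilde G)=0$), whence $\exp(\R Z)\cong\R$. Finally, the Iwasawa and Levi decompositions give $\g=\kf\oplus\s$ as vector spaces, so any subalgebra complementary to $\s$ of dimension $\dim\kf$ is a candidate isotropy algebra.

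For the linear group I would set $G:=\widetilde{G}/\langle\gamma\rangle$. Since $\gamma$ is central and lies in the semisimple factor, this quotient simply closes up the Hermitian circle, turning the connected subgroup $\widetilde{K}<\widetilde{G}$ with Lie algebra $\kf$ (noncompact, because the circle is unwound in $\widetilde{G}$) into a compact subgroup $K$. Identifying $\widetilde{G}\cong\widetilde{G_1}\ltimes G_2$ via Levi--Malcev, and using that $\gamma$ acts trivially on $G_2$, shows $G\cong G_1^{\mathrm{lin}}\ltimes G_2$ with $G_1^{\mathrm{lin}}$ the linear form and $G_2$ the simply-connected completely solvable (hence linear) radical, so $G$ is linear. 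Because $\g=\s\oplus\kf$ and $\widetilde{S}\cap\langle\gamma\rangle=\{e\}$ (the latter from the global Iwasawa product $\widetilde{G_1}=\widetilde{K_1}\cdot AN$, in which $\gamma\in\widetilde{K_1}$ cannot lie in the $AN$ part), the group $S$ embeds and acts with trivial stabilizer on $G/K$; an open-orbit-plus-connectedness argument upgrades this to a simply transitive action.

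For the non-linear group the key idea is to close up a \emph{twisted} circle. I would set $G':=\widetilde{G}/\langle\gamma\exp(cZ)\rangle$ for a fixed $c\neq 0$; the generator is again central and of infinite order, so this is a quotient by an infinite cyclic central subgroup. Put $\zeta':=\zeta+\tfrac{c}{2\pi}Z$ and $\lf:=\{X+\psi(X):X\in\kf\}$, where $\psi:\kf\to\R Z\subset Z(\g)$ vanishes on $[\kf,\kf]$ and sends $\zeta\mapsto\tfrac{c}{2\pi}Z$. Since $Z$ is central, $\psi$ kills all brackets, so $\lf\cong\kf$ is a compact-type subalgebra complementary to $\s$; moreover $\exp_{G'}(2\pi\zeta')=\gamma\exp(cZ)=e$ in $G'$, so the central circle of the corresponding subgroup $L$ closes and $L$ is compact, and as before $S$ embeds and acts simply transitively on $G'/L$. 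To see $G'$ is non-linear, I would argue by central characters: given a faithful finite-dimensional representation of $G'$, lift it to $\tilde\rho$ on $\widetilde{G}$ with kernel exactly $\langle\gamma\exp(cZ)\rangle$; restricting to the semisimple $\widetilde{G_1}$, whose finite-dimensional representations have image with finite center, forces $\tilde\rho(\gamma)$ to have some finite order $N$, whence $\tilde\rho(\exp(NcZ))=I$ and $\exp(NcZ)\in\langle\gamma\exp(cZ)\rangle$. Projecting to $\widetilde{G_1}$ then gives $\gamma^k=e$, forcing $k=0$ and hence $\exp(NcZ)=e$, contradicting $\exp(\R Z)\cong\R$ and $Nc\neq 0$.

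I expect the main obstacle to be the dual bookkeeping in the non-linear case: simultaneously arranging that the twist by the central direction $Z$ recompactifies the isotropy subgroup $L$ (this is where $\lf$ must be the modified complement $\{X+\psi(X)\}$ rather than $\kf$ itself, since in $G'$ the honest $\kf$ still integrates to a noncompact subgroup) while \emph{preserving} the obstruction to linearity carried by the infinite-order central element $\gamma$. The linearity of $G$ and the global simple-transitivity and trivial-intersection statements are routine but must be checked; the genuinely delicate point is the central-character computation showing that the diagonal relation $\gamma=\exp(-cZ)$ imposed in $G'$ cannot be realized by any finite-dimensional representation.
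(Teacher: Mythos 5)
Your construction is essentially the paper's: the linear group $G$ is $\tG$ modulo the infinite cyclic central subgroup $D=\tG_1\cap Z(\tG)$ sitting inside $Z(\tK)\simeq\R$, and the non-linear group $G'$ is $\tG$ modulo the ``diagonal'' copy $D'=\{(z,\mu(z)):z\in D\}$ obtained by pushing $D$ into a line $W\simeq\R$ in $Z(\tG)_0$, with the compact isotropy in $G'$ the subgroup integrating exactly your graph subalgebra $\lf=\{X+\psi(X):X\in\kf\}$. Your central-character argument for non-linearity is a correct expansion of a point the paper only asserts, and the trivial-intersection and simple-transitivity checks match.

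The one step that can fail as written is the assertion that $\gamma=\exp_{\tG}(2\pi\zeta)$ is central in $\tG$. Normalizing $\zeta\in Z(\kf)$ so that $\exp(t\zeta)$ closes up in a linear form of $G_1$ only puts $\gamma$ in $Z(\tG_1)$; centrality in $\tG=\tG_1\ltimes G_2$ additionally requires $\Ad(\gamma)|_{\g_2}=\mathrm{id}$, which need not hold. For instance, with $\g_1=\slf(2,\R)$ acting on $\g_2=\R^2\oplus Z(\g)$ by the standard representation on the $\R^2$ factor, the natural generator of $\ker(\widetilde{\SL}(2,\R)\to\PSL(2,\R))$ acts on $\R^2$ as $-I$, so $\langle\gamma\rangle$ is not even normal in $\tG$ and the quotient is not a group. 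The fix is cheap and does not disturb anything downstream: the image of $Z(\tG_1)$ in $GL(\g_2)$ is the center of a connected linear semisimple group and hence finite, so a suitable power of $\gamma$ lies in $\tG_1\cap Z(\tG)$; take that power as your generator (equivalently, let $\gamma$ generate $D=\tG_1\cap Z(\tG)\simeq\Z$, which is exactly the paper's choice). With that replacement your $G$, $G'$, $L$, and the non-linearity argument all go through verbatim.
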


\begin{proof}For notational simplicity, we assume that $\g_1$ is simple.  The proof is easily modified for the general case. Let $\tG$ be the simply-connected Lie group with Lie algebra $G$.     Then $\tG=\tK S$ and $\tG_1=\tK S_1$, where $S$ is defined as in the statement of the lemma and $\tK$ and $S_1$ are the connected subgroups of $\tG$ with Lie algebras $\kf$ and $\s_1$.   Setting $D=\tG_1\cap Z(\tG)$, we have 
$$\Z\simeq D <Z(\tK)\simeq \R.$$   

Let $G=D\backslash\tG$ and $K=D\backslash \tK <G$.   Then $G$ is a linear Lie group and $S$ acts simply transitively on $G/K$.

We next define $G'$.   Let $Z(\tG)_0$ be the identity component of the center of $\tG$.  Since $\tG$ is simply-connected, we have $Z(\tG_0)\simeq \R^m$ for some $m\geq 1$.    Choose a one-dimensional subspace $W$ of $Z(\tG)$ and an isomorphism $\mu:\tK\to W$.   Let
$$W':=\{(a,\mu(a)): a\in Z(\tK)\} \,\,\,\mbox{and}\,\,\,D':=\{(z, \mu(z)): z\in D\}.$$
Set $G'=D'\backslash \tG$.   Let $K'$ be the connected subgroup of $G'$ with Lie algebra $\kf$ and $K'_1$ the semisimple part of $K'$.    The center of $K'$ is non-compact and $G'$ is not a linear Lie group.   However, letting 
$$L=K' \times (D'\backslash W'),$$
then $L$ is compact and $S$ acts simply transitively on $G'/L$.

\end{proof}

\begin{example}\label{type I example}[Type I.] Let $\g$ satisfy the hypotheses of Lemma~\ref{lem:Hermitian} and assume in addition that $\g_2=Z(\g)\simeq \R^m$ for some $m\geq 1$.     In the notation of the lemma, any Riemannian metric $g$ on $S$ induced by a left-invariant metric on $G/K$ is a solvsoliton.   (Indeed every such metric gives $S$ the structure of a Riemannian symmetric space.)  It will follow from Theorem~\ref{main} that $g$ is infinitesimally maximally symmetric.    However, since $S$ also acts simply transitively on $G'/L$, we can also give $S$ a left-invariant Riemannian metric $g'$ such that $\Isom_0(S,g')=G'$.   Since $G'$ is not isomorphic to any subgroup of $G$,  the Lie group $S$ cannot admit any almost maximally symmetric metric.
\end{example}

\begin{example}\label{type II example}[Type II.] 
Let $S=S^*\times \R$ where $S^*$ is the Iwasawa subgroup of $\SL(3,\R)$ given by the upper triangular matrices, and let $g$ be a metric on $S$ giving it the structure of a Riemannian symmetric space with $\Isom_0(S,g)= \SL(3,\R)\times \R$.   As in the previous example, it follows from Theorem~\ref{main} that $g$ is infinitesimally maximally symmetric.   In contrast to the previous example, up to isometry and rescaling,  $g$ is the unique left-invariant metric with isometry algebra $\slf(3,\R)\oplus \R$ and thus the unique infinitesimally  maximally symmetric metric.   

To see that $g$ is not almost maximally symmetric, consider the parabolic subalgebra $\h$ of $\slf(3,\R)$ given by all matrices in $\slf(3,\R)$ for which the first two entries of the bottom row are zero.   We have $\h=\h_1\ltimes \h_2$ with $\h_1\simeq \slf(2,\R)$ and
$$\h_2=\begin{bmatrix} a&0&b\\0&a&c\\0&0&-2a  \end{bmatrix}.$$  
Let $\g=\h\oplus\R$.   Then $\g$ satisfies the hypothesis of Lemma~\ref{lem:Hermitian} with $\g_1=\slf(2,\R)$ and $\g_2=\h_2\oplus\R$.  Note that the Lie group $S$ defined in the lemma coincides with the solvable Lie group $S$ defined at the beginning of this example.   By the lemma, there exists a left-invariant Riemannian metric $g'$ on $S$ such that $\Isom_0(S,g')$ is a non-linear Lie group $G'$ with Lie algebra $\g$.     Since $\Isom_0(S,g)$ is a linear Lie group, $\Isom_0(S,g')$ cannot be isomorphic to any subgroup of $\Isom_0(S,g)$ and thus $g$ is not almost maximally symmetric.
\end{example}

\subsection{Infinitesimal maximal symmetry and equivalence classes of solvable Lie groups.}

\begin{thm}\label{thm.R admits max sym} 
Let $R$ be a simply-connected solvable Lie group and let $S$ be the unique completely solvable Lie group in the equivalence class of $R$ (as defined in Theorem~\ref{thm: equiv classes}).  Suppose that $S$ admits an infinitesimally maximally symmetric left-invariant metric $g_0$   Then:
\begin{enumerate}
\item $R$ admits an infinitesimally maximally symmetric left-invariant Riemannian metric $g_0'$ isometric to $g_0$. 
\item  If, moreover, $g_0$ is almost maximally symmetric, then for every $g\in \Lc(R)$, there exists a left-invariant metric $g_0''\in \Lc(\R)$ isometric to $g_0'$ such that $\Isom_0(R,g)\subset \Isom_0(R, g_0'')$.
\end{enumerate}
 \end{thm}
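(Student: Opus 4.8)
The plan is to establish a single ``realization principle'' and then read off both statements. Throughout, the mechanism is that a member $Q$ of the equivalence class of $S$ can be realized as a simply-transitive \emph{isometry} subgroup of a solvmanifold $(\M,g_\M)$ whenever $Q$ lies in the equivalence class of a simply-transitive solvable subgroup of $\Isom_0(\M)$; $Q$ then inherits a left-invariant metric with $(Q,\cdot)\cong \M$. The principle comes in two flavors. To pass \emph{down} to the completely solvable representative $S=\sigma(S_{\mathrm{std}})$ of the standard-position subgroup $S_{\mathrm{std}}$ of $G=\Isom_0(\M)$ (Remark~\ref{stdmodalg}, Proposition~\ref{stdpos}), one checks that the modification map $\varphi(X)=-\ad(X)_s^{i\R}$ of Lemma~\ref{modprops2} takes values in $\Der_{\sk}(\s_{\mathrm{std}})<\cisom(\M)$, so the modification can be performed inside $G$ rather than merely abstractly in $\Aut(S_{\mathrm{std}})$. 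To pass \emph{up} from $S$ to a general modification $R$ inside $(S,g_0)$, one needs the reverse (necessarily compact, by Propositions~\ref{modprops} and \ref{prop: modification picking up metric}) modification directions to lie in $\Der_{\sk}(\s,g_0)$, i.e.\ one needs $g_0$ to be invariant under a maximal compact connected subgroup $U$ of $\Aut(S)$.

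For the existence half of (1), I would first prove that an infinitesimally maximally symmetric metric $g_0$ on the completely solvable $S$ may, after pullback by an automorphism, be taken $U$-invariant for a maximal compact connected $U<\Aut(S)$. Fixing such a $U$ and a $U$-invariant metric $g_*$, one has $\Der_{\sk}(\s,g_*)=\uf$ by maximality, so $\cisom(S,g_*)\supset \s\rtimes\uf$. Feeding $g_*$ into the definition of infinitesimal maximal symmetry and choosing the monomorphism $\alpha\colon\cisom(S,g_*)\to\cisom(S,g_0)$ to satisfy $\alpha(\lambda^{g_*}_*(\s))=\lambda^{g_0}_*(\s)$ as in Remark~\ref{comp solv alpha}(2), the image $\alpha(\uf)$ is a compact subalgebra normalizing $\s$, hence lies in $\Der_{\sk}(\s,g_0)$ modulo central directions; a dimension count against the maximality of $\uf$ then forces $\Der_{\sk}(\s,g_0)$ to be maximal compact, and conjugacy of maximal compact subgroups of $\Aut(S)$ gives $U$-invariance of an automorphic image of $g_0$. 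With $g_0$ thus $U$-invariant, realizing $R$ as a modification of $S$ in $S\rtimes U<\Isom(S,g_0)$ (Proposition~\ref{prop: modification picking up metric}) produces $g_0'\in\Lc(R)$ with $(R,g_0')\cong(S,g_0)$ and hence $\cisom(R,g_0')\cong\cisom(S,g_0)$.

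To verify that this $g_0'$ is itself infinitesimally maximally symmetric, I would take an arbitrary $g\in\Lc(R)$, set $\M=(R,g)$, and apply the downward realization principle to $G=\Isom_0(\M)$: since every simply-transitive solvable subgroup of $G$ is a modification of $S_{\mathrm{std}}$ (Proposition~\ref{stdpos}(iii)) and $\sigma(S_{\mathrm{std}})=\sigma(R)=S$, the completely solvable $S$ is realized as a simply-transitive isometry subgroup of $\M$, so $\M\cong(S,\tilde g)$ for some $\tilde g\in\Lc(S)$. Then $\cisom(R,g)\cong\cisom(S,\tilde g)\hookrightarrow\cisom(S,g_0)\cong\cisom(R,g_0')$ by infinitesimal maximal symmetry of $g_0$, which is exactly the claim. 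Statement (2) is the same argument carried out at the level of transformation groups rather than algebras: identifying $\M=(R,g)=(S,\tilde g)$ inside $G$, almost maximal symmetry supplies $\Phi\in\Aut(S)$ with $\Isom_0(S,\tilde g)\subset\Isom_0(S,\Phi^*g_0)$; transporting $\Phi^*g_0$ to $\M$ and pulling back along the orbit map $R\to\M$ defines a left-invariant $g_0''\in\Lc(R)$ with $(R,g_0'')\cong(S,g_0)$, and conjugating the inclusion by that orbit map yields $\Isom_0(R,g)\subset\Isom_0(R,g_0'')$.

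I expect the principal obstacle to be the two realizability facts that make the modifications take place \emph{inside the isometry group}: namely that the imaginary-eigenvalue derivations $\ad(X)_s^{i\R}$ are skew-symmetric for a standard-position metric (needed to realize $S$ in $\M$), and the dual statement that an infinitesimally maximally symmetric $g_0$ is invariant under a full maximal compact subgroup of $\Aut(S)$ (needed to realize $R$ in $(S,g_0)$). Both require care when $Z(\s)\neq 0$, since compact directions can then hide in the center; this is presumably why statement (2) of the Main Theorem and Proposition~\ref{comp solv inf implies almost} single out the trivial-center case, and I would treat the central directions separately, absorbing them using that $S$ is completely solvable so that every compactly embedded subalgebra of $\s$ is central.
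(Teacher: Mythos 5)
Your ``downward realization principle'' --- that for \emph{every} $g\in\Lc(R)$ the completely solvable representative $S=\sigma(S_{\mathrm{std}})$ can be realized as a simply-transitive group of isometries of $\M=(R,g)$, so that $\M\cong(S,\tilde g)$ --- is false, and this is the load-bearing step in your verification that $g_0'$ is infinitesimally maximally symmetric. The check you propose, namely that $\varphi(X)=-\ad(X)_s^{i\R}$ lands in $\Der_{\sk}(\s_{\mathrm{std}},g)$, fails for generic $g$: take $R=\widetilde{E(2)}=\widetilde{SO(2)}\ltimes\R^2$, whose completely solvable representative is $S=\R^3$. A generic left-invariant metric $g$ on $R$ is non-flat with $\Isom_0(R,g)=R$, so $\Der_{\sk}(\rf,g)=0$ while $\varphi(\rf)\neq 0$; and $(R,g)$ cannot be isometric to any $(\R^3,\tilde g)$ since the latter is always flat. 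Theorem~\ref{thm: equiv classes}(ii) only asserts that \emph{some} pair of metrics on $R$ and $S$ are isometric, not that every metric on $R$ transports to $S$. Consequently the chain $\cisom(R,g)\cong\cisom(S,\tilde g)\hookrightarrow\cisom(S,g_0)$ breaks at the first link, and the same identification invalidates your argument for statement (2).

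What is actually needed --- and what the paper proves as Proposition~\ref{prop: reduction to compl solv solvsol} --- is the weaker statement that for every $g\in\Lc(R)$ there is a possibly \emph{different} metric $g'\in\Lc(R)$ with $\Isom_0(R,g)<\Isom_0(R,g')$ and $(R,g')$ isometric to a left-invariant metric on $S$; one then concludes via $\cisom(R,g)\hookrightarrow\cisom(R,g')=\cisom(S,\tilde g)\hookrightarrow\cisom(S,g_0)$. Producing $g'$ is the technical heart of the proof: since the missing compact directions need not lie in $\cisom(R,g)$, one enlarges $G=\Isom_0(R,g)$ to $G'=G\rtimes H$ with $H$ a compact subgroup of $\Aut^L(G)$ (automorphisms fixing the isotropy group pointwise), chooses a $G'$-invariant metric on $G'/L'$, and exhibits a completely solvable simply-transitive subgroup of $G'$ with Lie algebra built from $\s_1+\nilrad(\g)$ together with the twisted graph $\{(X,-\ad_\g(X)_s^{i\R})\}$ over a carefully constructed complement $\bfrak$ of $\nilrad(\g)$ commuting with $\g_1+\lf$ (Lemmas~\ref{D}, \ref{centralizer}, \ref{bfrak}, \ref{finallemma}). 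None of this machinery appears in your proposal, so the gap is substantive rather than cosmetic. Your ``upward'' half --- that $\Aut_\orth(S,g_0)$ is maximal compact for an infinitesimally maximally symmetric $g_0$, so that $R$ embeds in $\Isom(S,g_0)$ and inherits $g_0'$ --- does match the paper's argument for existence in part (1).
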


Note that the conclusion of the second item is weaker than almost maximal symmetry of $g_0'$, as the metric $g_0''$ is not required to be the pullback of $g_0'$ by an automorphism of $R$.   Example~\ref{R not admit almost max} below shows that the stronger condition of almost maximal symmetry does not always hold.

The key step in proving the theorem is the following proposition:

\begin{prop}\label{prop:  reduction to compl solv solvsol} Let $R$ be a simply-connected solvable Lie group and let $S$ be the completely solvable Lie group in the equivalence class of $R$.    Let $g\in \Lc(\R)$.   Then there exists $g'\in \Lc(\R)$ such that $\Isom_0(R,g)<\Isom_0(R,g')$ and such that $g'$ is isometric to a left-invariant Riemannian metric on $S$. 
\end{prop}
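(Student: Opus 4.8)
Write $G=\Isom_0(R,g)$ and regard $R=\lambda^g(R)$ as a simply-transitive solvable subgroup of $G$, so that $\rf\subseteq\g:=\cisom(R,g)$ and $\M:=(R,g)\cong G/L$, where $L$ is the (compact) isotropy subgroup at $e$. The plan is to enlarge $G$ by a compact torus so that, inside the enlargement, the completely solvable modification $S=\sigma(R)$ of Lemma~\ref{modprops2} reappears as a \emph{second} simply-transitive subgroup; the metric it induces on $R$ will be the desired $g'$.

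The key is to realise the abstractly defined modification map of Lemma~\ref{modprops2} by inner derivations of a larger algebra. Since $\rf$ is a subalgebra, $\ad_\g(X)$ preserves $\rf$ and restricts to $\ad_\rf(X)$ for every $X\in\rf$; consequently the imaginary semisimple part $\delta_X:=\ad_\g(X)_s^{i\R}\in\Der(\g)$ (well defined by Notation~\ref{notajordan}) restricts on $\rf$ to $\ad_\rf(X)_s^{i\R}=-\varphi(X)$, where $\varphi$ is the modification map of Lemma~\ref{modprops2}. First I would verify that $\tf:=\spa\{\delta_X:X\in\rf\}$ is an \emph{abelian} subalgebra of $\Der(\g)$ consisting of imaginary-semisimple --- hence compactly embedded --- derivations: all the $\delta_X$ lie in the maximal torus of the algebraic hull of the solvable algebra $\ad_\g(\rf)$, and that torus is abelian. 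Because each $\delta_X$ preserves $\rf$, the torus $\tf$ normalises $\rf$.

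Now form $\hat\g:=\g\rtimes\tf$ and integrate it to a connected Lie group $\hat G\supseteq G$ in which $T:=\exp(\tf)$ is a \emph{compact} torus (replacing $\exp(\tf)$ by its closure in $\Aut(G)$ if necessary). Set $\hat L:=L\,T$, a compact subgroup, let $\hat\M:=\hat G/\hat L$, and choose a $\hat G$-invariant Riemannian metric $\hat g$ on $\hat\M$. A dimension count together with $R\,\hat L=(RL)\,T=GT=\hat G$ shows that $R$ acts simply transitively on $\hat\M$; the induced left-invariant metric $g'\in\Lc(R)$ then satisfies $(R,g')\cong(\hat\M,\hat g)$, and since $G<\hat G$ acts by isometries compatibly with left translations of $R$ we obtain $\Isom_0(R,g)=G\subseteq\hat G\subseteq\Isom_0(R,g')$. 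On the other hand, inside $\hat\g$ the map $\varphi(X)=-\delta_X$ is now realised by genuine elements; as its image lies in the compactly embedded $\tf$ and $\delta_X(\rf)\subseteq\rf$, conditions (ii)--(iii) of Definition~\ref{def.mod} hold, so $\s:=\{X-\delta_X:X\in\rf\}$ is a subalgebra of $\hat\g$ isomorphic to $\sigma(\rf)$ and completely solvable by Lemma~\ref{modprops2}. From $\s+\tf\supseteq\rf$ one gets $\s+\hat\lf=\hat\g$ (with $\hat\lf=\lf+\tf$), so the connected subgroup $S<\hat G$ with Lie algebra $\s$ acts simply transitively on the simply-connected $\hat\M$; hence $\hat\M\cong(S,\tilde g)$ for some $\tilde g\in\Lc(S)$. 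Since $S\cong\sigma(R)$ is the unique completely solvable group in the equivalence class of $R$ (Theorem~\ref{thm: equiv classes}), $g'$ is isometric to $\tilde g$, which is exactly the assertion.

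I expect the main obstacle to be the enlargement in the third paragraph: one must integrate the derivation torus $\tf$ to a genuinely \emph{compact} group of automorphisms of the group $G$ (not merely of $\g$), which requires care with the topology of $G$ --- for instance passing to a suitable cover, or invoking that $\Isom_0(R,g)$ is almost algebraic so that the imaginary-toral directions of $\ad_\g(\rf)$ exponentiate to a compact torus. Once this is in place, checking that the compactified torus still normalises $R$, that $\hat L$ is compact, and that both $R$ and $S$ act simply transitively on $\hat\M$ is routine bookkeeping, as is the abelianness of $\tf$.
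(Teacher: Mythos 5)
Your overall strategy --- enlarge $G=\Isom_0(R,g)$ by a compact torus of automorphisms realizing the modification map, so that $R$ and a completely solvable group both act simply transitively on a common homogeneous space $\hat G/\hat L$, and then read off $g'$ from a $\hat G$-invariant metric --- is exactly the strategy of the paper. But there is a genuine gap in how you choose the torus, and it is not the compactness issue you flag but the step you call ``routine bookkeeping.'' You take $\tf$ to be spanned by $\delta_X=\ad_\g(X)_s^{i\R}$ for \emph{all} $X\in\rf$ and then set $\hat L=LT$. For $\hat L$ to be a compact subgroup with $\dim\hat L=\dim\lf+\dim\tf$ --- which your dimension count for simple transitivity of $R$ on $\hat G/\hat L$ requires --- the torus $T$ must at least normalize $L$. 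Nothing forces this: for $X\in\rf\cap\g_2$ one only knows $[\lf,X]\subset\nilrad(\g)$, so $\ad_\g(X)$, and hence $\delta_X$, need not preserve $\lf$ at all, and then $LT$ is not even a group. This is precisely why the paper works inside $\Aut^L(G)$ (Notation~\ref{autl}), the automorphisms fixing $L$ pointwise, and why it does \emph{not} twist all of $\rf$: Lemmas~\ref{centralizer} and~\ref{bfrak} first produce a complement $\bfrak+\uf_0$ of $\lf+\s_1+\nilrad(\g)$ in $\g$ that \emph{commutes} with $\g_1+\lf$, so that the derivations $\ad_\g(X)_s^{i\R}$ for $X\in\bfrak$ lie in $\Der^L(\g)$, and only these directions (together with $\uf_0$) are modified in Lemma~\ref{finallemma}. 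The same restriction is what guarantees that the derivations integrate to automorphisms of $G$ itself rather than merely of its universal cover: Lemma~\ref{D}(i) uses that an automorphism fixing $\tL$ pointwise fixes the deck group $D<\tL$ and so descends; your $\delta_X$ need not descend to $G$.

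A secondary but real problem is your justification that $\tf$ is abelian and that $\{X-\delta_X\}$ closes up to a subalgebra of $\hat\g$. The semisimple parts of the elements of a solvable linear Lie algebra do \emph{not} all lie in a common maximal torus of the algebraic hull and need not commute (already $\ad$ of the two-dimensional nonabelian algebra gives noncommuting semisimple parts), so that sentence is not a proof; moreover $X\mapsto\ad_\g(X)_s^{i\R}$ is not obviously linear, so $\{X-\delta_X:X\in\rf\}$ is not obviously a subspace. Lemma~\ref{modprops2} supplies these facts for the operators $\ad_\rf(X)_s^{i\R}$ acting on $\rf$; your $\delta_X$ restrict to those on $\rf$, but the commutators $[\delta_X,\delta_Y]$ and the failure of linearity live in the part of $\g$ outside $\rf$ and are not controlled by that lemma. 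In the paper this is again handled by the construction in Lemma~\ref{bfrak}(ii), which builds $\bfrak$ inductively precisely so that the relevant semisimple parts form an abelian subspace of $\Der^L(\g)$.
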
  

We first assume Proposition~\ref{prop:  reduction to compl solv solvsol} and prove the theorem.

\begin{proof}[Proof of Theorem~\ref{thm.R admits max sym}]
Note that $\Aut_\orth(S,g_0)$ must be a maximal connected compact subgroup of $\Aut(S)$.    (Otherwise, let $H$ be a connected compact subgroup of $\Aut(S)$ properly containing $\Aut_\orth(S,g)$.  Then the normalizer of $\s$ in $\cisom(S,g')$ has larger dimension than the normalizer of $\s$ in $\cisom(S,g)$, contradicting infinitesimal maximal symmetry of $g$.) 
The proof of the implication (a) $\implies$ (b) in Theorem~\ref{thm: equiv classes} shows  that $R$ admits a left-invariant Riemannian metric $g_0'$ isometric to $g_0$.  The fact that $g_0'$ satisfies the conclusions of the theorem is a straightforward consequence  of Proposition~\ref{prop:  reduction to compl solv solvsol}.

\end{proof}

To prove Proposition~\ref{prop:  reduction to compl solv solvsol}, we must take a careful look at the structure of the isometry group for a solvmanifold.  Let $\M=(R,g)$ and let $G=\Isom_0(\M)$.    We use the notation and results of Proposition~\ref{stdpos}, except that we write $S^*$ for the subgroup of $G$ in standard position in order to avoid confusion with the group $S$ in the statement of the proposition.   In particular, the isotropy group $L$ at a point of $\M$, the solvable group $S^*$, the Levi decomposition $G=G_1G_2$ and Iwasawa decomposition $G_1=KA_1N_1$ satisfy all the conditions in Proposition~\ref{stdpos} with $S^*$  playing the role of $S$.    

\begin{nota}\label{autl} Let $\Aut^L(G)$ denote the identity component of the subgroup of $\Aut(G)$ consisting of those automorphisms that fix $L$ pointwise.  (Note that such automorphisms commute with the inner automorphisms $\Inn_G(L)$.)   

Let $\Aut^L(\g)$ be the subgroup of $\Aut(\g)$ consisting of all automorphisms that fix $\mathfrak l = \mathrm{ Lie }~L$  pointwise; as above, these commute with $\Ad_G(L)<\Aut(G)$.  Let $\Der^L(\g)$ be the Lie algebra of $\Aut^L(G)$.
\end{nota}

\noindent\underline{Strategy of the proof of Propition~\ref{prop:  reduction to compl solv solvsol}.}   Let $H$ be a compact subgroup of $\Aut^L(G)$.   Setting $G'=G\rtimes H$ and $L'=L\rtimes H$, observe that the subgroup $R<G<G'$ acts simply transitively on $G'/L'$.     We will show that $H$ can be chosen in such a way that $G'$ contains an isomorphic copy of $S$ that also acts simply transitively on $G'/L'$.   Any left-invariant Riemannian metric on $G'/L'$ induces isometric left-invariant metrics $g'$ on $R$ and $S$.  Since $G<\Isom_0(R,g')$, the proposition will follow.

\begin{lemma}\label{D}~
\begin{enumerate}
\item The map $\Phi\to\Phi_*$ restricts to  an isomorphism from $\Aut^L(G)$ to the subgroup $\Aut^L(\g)$ of $\Aut(\g)$.  
\item  $\Aut^L(\g)$ is an algebraic subgroup of $\Aut(\g)$ whose Lie algebra is $\Der^L(\g)$, the space of all derivations that vanish on $\lf$ and thus that commute with $\ad_\g(\lf)$.   
\item $\Der^L(\g)\cap \ad(\g)=\ad_\g(C_\g(\lf))$.
\item $\ad_\g(C_\g(\lf))$ is a solvable ideal of $\Der^L(\g)$.
\item In the language of Notaion~\ref{notajordan}, for  $\delta\in \Der^L(\g)$, the Jordan components $\delta_n$, $\delta_s^\R$ and $\delta_s^{i\R}$ all lie in $\Der^L(\g)$.

\end{enumerate}
\end{lemma}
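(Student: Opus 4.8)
The plan is to prove the five assertions of Lemma~\ref{D} in order, since each one builds on the previous. The overarching strategy is to transfer everything to the Lie algebra level via the differential map $\Phi\mapsto\Phi_*$, and to exploit that $\Der^L(\g)$ is the Lie algebra of an algebraic group (so that it is closed under Jordan decomposition, by the same reasoning invoked in Notation~\ref{notajordan}).

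First I would establish (i). The map $\Phi\mapsto\Phi_*$ is always a homomorphism from $\Aut(G)$ into $\Aut(\g)$, and since $G$ is connected it is injective. The content is that it restricts to a \emph{surjection} $\Aut^L(G)\to\Aut^L(\g)$. Here I would use that $G=\Isom_0(\M)$ with isotropy $L$: an automorphism of $\g$ fixing $\lf$ pointwise exponentiates to an automorphism of the simply-connected cover $\tG$, and the condition of fixing $\lf$ pointwise guarantees it descends to $G$ and fixes $L$ pointwise (the discrete kernel lies in the center, which $L$ meets appropriately). The differential of an automorphism fixing $L$ pointwise fixes $\lf$ pointwise, giving the reverse inclusion, so the restriction lands in and surjects onto $\Aut^L(\g)$. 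For (ii), the description of $\Aut^L(\g)$ as the automorphisms vanishing on $\lf$ shows it is cut out by polynomial (in fact linear) conditions, hence algebraic; its Lie algebra consists of derivations $\delta$ with $\delta|_\lf=0$, and $\delta|_\lf=0$ is exactly the infinitesimal form of $e^{t\delta}$ fixing $\lf$ pointwise. Such $\delta$ commute with $\ad_\g(\lf)$ because $\delta(\ad(X)Y)=\ad(\delta X)Y+\ad(X)\delta Y=\ad(X)\delta Y$ for $X\in\lf$.

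Next, for (iii), the inclusion $\ad_\g(C_\g(\lf))\subset\Der^L(\g)\cap\ad(\g)$ is clear since $\ad(Z)|_\lf=0$ precisely when $Z\in C_\g(\lf)$. Conversely any inner derivation $\ad(Z)$ lying in $\Der^L(\g)$ vanishes on $\lf$, forcing $Z\in C_\g(\lf)$, which gives equality. For (iv) I would argue that $C_\g(\lf)$ is a solvable subalgebra: since $\lf$ is the isotropy algebra of the transitive isometric action and contains a maximal compactly embedded subalgebra, its centralizer inherits a reductive-type structure whose semisimple part is forced into $\lf$ itself, leaving a solvable centralizer; then $\ad_\g(C_\g(\lf))$ is solvable, and it is an ideal in $\Der^L(\g)$ because for $\delta\in\Der^L(\g)$ and $Z\in C_\g(\lf)$ one has $[\delta,\ad(Z)]=\ad(\delta Z)$, with $\delta Z\in C_\g(\lf)$ (as $\delta$ commutes with $\ad(\lf)$). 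Finally (v) is immediate from the algebraicity established in (ii): because $\Der^L(\g)$ is the Lie algebra of an algebraic group, the argument cited on p.~203 of \cite{Mostow:FullyReducibleSubgrpsOfAlgGrps} in Notation~\ref{notajordan} applies verbatim, so $\delta_n,\delta_s^\R,\delta_s^{i\R}$ all lie in $\Der^L(\g)$.

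I expect the main obstacle to be the surjectivity half of (i) together with the solvability claim in (iv). For (i) the delicate point is controlling the discrete central kernel when passing between $G$, its simply-connected cover, and $\g$, and verifying that an abstract automorphism of $\g$ fixing $\lf$ genuinely integrates to an element of $\Aut^L(G)$ rather than merely of $\Aut(\tG)$; the right framework is to use that $L$ is the \emph{full} isotropy of a transitive action, so its interaction with $Z(\tG)$ is rigid. For (iv) the crux is showing $C_\g(\lf)$ is solvable, which I would handle by noting $\lf$ contains the compact part governing the reductive structure of $\g$ (as in Propositions~\ref{stdpos} and \ref{lem.s1}), so that any semisimple ideal centralizing $\lf$ would have to be compact and already absorbed into $\lf$, leaving only a solvable centralizer. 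Once these two points are secured, the remaining assertions follow formally.
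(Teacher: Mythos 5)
Your overall route coincides with the paper's: (ii), (iii) and (v) are handled the same way (linear conditions cut out $\Aut^L(\g)$; the identity $[\delta,\ad_\g(Z)]=\ad_\g(\delta Z)$ with $\delta Z\in C_\g(\lf)$ gives the ideal property; Mostow's result from Notation~\ref{notajordan} gives closure under Jordan components), and for (i) you correctly reduce to integrating an automorphism of $\g$ fixing $\lf$ up to $\tG$ and descending. For (i) you should, however, make the descent precise: the kernel $D$ of $\tG\to G$ is the effective kernel of the lifted action on $\M$ and hence lies in the isotropy group $\tL$, and $\tL$ is \emph{connected} (because $\tG$ is connected and $\M$ is simply connected), so it is the integral subgroup of $\lf$ and is fixed pointwise by any automorphism whose differential fixes $\lf$ pointwise; this is what forces $D$ to be fixed and the descended automorphism to fix $L=\tL/D$ pointwise. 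Centrality of $D$ alone does not suffice, since a central subgroup need not be preserved by an arbitrary automorphism.

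The genuine gap is in (iv), in the solvability of $C_\g(\lf)$. Your premise that $\lf$ contains a maximal compactly embedded subalgebra of $\g$ is false in general: $\lf$ is compactly embedded but rarely maximal (for instance $Z(\g)$ is compactly embedded and meets $\lf$ trivially by effectiveness, so $\lf+Z(\g)$ is strictly larger whenever $Z(\g)\neq 0$; and the group in standard position can itself contain compactly embedded directions such as $\mf_2$). What Proposition~\ref{stdpos} actually provides is $K_1<L<KG_2$, and centralizing $\kf_1$ alone does not make the semisimple part of the centralizer compact: if $\g_1$ has an $\slf(2,\R)$ factor, then $\kf_1$ misses its $\so(2)$ and $C_{\g_1}(\kf_1)$ contains that entire noncompact factor. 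So the step ``any semisimple piece is compact and absorbed into $\lf$'' is unjustified, and it is exactly the hard part. The paper instead proves solvability via the containment $C_\g(\lf)\subset Z(\kf)+\g_2$, which is a solvable subalgebra; establishing that containment uses the \emph{full} isotropy algebra $\lf$, not just $\kf_1$ --- via $\g=\lf+\s^*$ and the description of $\s^*$ in Proposition~\ref{stdpos}(ii), which forces $\lf$ to surject onto enough of $\kf$ that the $\g_1$-component of any element commuting with $\lf$ lands in $Z(\kf)$. You need either that containment or an argument of comparable strength before the rest of (iv) (which you do prove correctly) goes through.
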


\begin{proof}(i) Let $\pi:\tG\to G$ be the universal cover of $G$, and let $D$ be the kernel of $\pi$ (a discrete central subgroup of $\tG$).  Let $M=G/L$ and let $\rho$ be the isometry action of $G$ on $\M$.  The action $\rho$ is effective.   $\tG$ acts on $M$ via $\rho\circ\pi$ with effective kernel $D$.   Necessarily $D$ lies in the isotropy subgroup $\tL$.   Since $\tG$ is connected and $\M$ is simply-connected, the isotropy subgroup $\tL$ is connected and thus must be the connected subgroup of $\tG$ with Lie algebra $\lf$.   

Since $\tG$ is simply-connected and $\tL$ is connected, we have $\Aut^\tL(\tG)\simeq \Aut^\tL(\g)$, where $\Aut^\tL(\g)$ is the subgroup of $\Aut(\g)$ consisting of all automorphisms that fix $\lf$ pointwise and commute with $\Ad_{\tG}(\tL)$.  Since $D$ is central and $L=\tL/D$, we have $\Ad_G(L)=\Ad_{\tG}(\tL):\g\to\g$.     Moreover, since $D<\tL$, every element of $\Aut^\tL(\tG)$ fixes $D$ and thus descends to an element of $\Aut^L(G)$.    The first statement of the lemma follows.

(ii) Since $\Aut(\g)$ is algebraic, the subgroup of $\Aut^L(\g)$ that stabilizes $\lf$ is also algebraic.

(iii) is immediate from the definition of $\Der^L(\g)$.  

(iv) Since $\ad(\g)$ is an ideal in $\Der(\g)$, (iv) follows from (iii) and the fact that $C_\g(\lf)$ lies in the solvable algebra $Z(\kf)+\g_2$. 

(v) follows from (ii) and the result of Mostow cited in Notation~\ref{notajordan}.

\end{proof}

 \begin{lemma}\label{centralizer} Let $\cf$ be the orthogonal complement of $\g_1+\lf$ in $\g$ relative to the Killing form.    In the notation   above, we have
 $$\nilrad(\g)<\cf<\s^*\cap\g_2 \mbox{\,\,\,and\,\,\,}(\g_1+\lf)\cap\cf<Z(\g).$$

\end{lemma}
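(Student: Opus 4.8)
The plan is to translate all three assertions into statements about the Killing form $B$ of $\g$ and the subspaces $\g_1,\g_2,\lf,\s^*$. Three inputs from Proposition~\ref{stdpos} drive everything: the isotropy $L$ is compact, so $\ad_\g(\lf)$ is semisimple with purely imaginary spectrum; the subgroup $G_1L$ is reductive; and $\s^*$ is a vector-space complement of $\lf$ in $\g$ with $\nilrad(\g)=\n_2<\s^*$. I will also use that $T=L\cap G_2$ commutes with $G_1$ and that $\g_2=\tf\oplus(\s^*\cap\g_2)$ with $\tf<\lf$, which come from $G=LS^*$ and the standard-position structure.

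First I would prove the stronger fact $\nilrad(\g)\subseteq\{X:B(X,\g)=0\}$, which immediately gives $\nilrad(\g)<\cf$. For $X\in\nilrad(\g)$ and $Y\in\g_2=\rad(\g)$, both $\ad X,\ad Y$ are simultaneously upper triangularizable over $\C$ by Lie's theorem, and $\ad X$ is moreover nilpotent, hence strictly upper triangular, so $B(X,Y)=\operatorname{tr}(\ad X\,\ad Y)=0$. For $X\in\nilrad(\g)$ and $Y\in\g_1$, note $\ad X$ maps $\g$ into $\nilrad(\g)$, so $B(X,Y)$ equals the trace of $(\ad X\,\ad Y)$ restricted to $\nilrad(\g)$; with $\rho:=\ad|_{\nilrad(\g)}$, the ideal $\rho(\nilrad(\g))\lhd\rho(\g)$ consists of nilpotent operators, and the chain $\nilrad(\g)\supseteq\rho(\nilrad(\g))\nilrad(\g)\supseteq\cdots$ is $\rho(\g)$-stable with $\rho(\nilrad(\g))$ acting as $0$ on each quotient, forcing $\operatorname{tr}(\rho(X)\rho(Y))=0$. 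Thus $B(\nilrad(\g),\g)=0$.

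Next I would establish $\cf<\g_2$ together with the center statement. The same two techniques give $\g_1\perp_B\g_2$ (using $[\g_1,\rad(\g)]\subseteq\nilrad(\g)$ to push $\ad X\,\ad Y$ into $\nilrad(\g)$). I then claim $B|_{\g_1}$ is nondegenerate: decomposing $\g=\g_1\oplus\g_2$ as a $\g_1$-module gives $B|_{\g_1}=B_{\g_1}+\kappa$, where $B_{\g_1}$ is the Killing form of $\g_1$ and $\kappa$ is the trace form of the $\g_1$-action on $\g_2$; choosing an inner product on $\g_2$ invariant under a maximal compact subgroup of the reductive group $G_1L$ makes $\ad(\kf)$ skew and $\ad(\p)$ symmetric for a Cartan decomposition $\g_1=\kf\oplus\p$, so on every simple ideal $\kappa$ is a nonnegative multiple of $B_{\g_1}$ and nondegeneracy persists. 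Hence $\g_1^{\perp}=\g_2$ and $\cf=\g_2\cap\lf^{\perp}<\g_2$. For the center: $(\g_1+\lf)\cap\cf$ is the radical of $B$ on the reductive subalgebra $\q=\g_1+\lf$; since $[\q,\q]\perp_B\zfrak(\q)$ and $B|_{[\q,\q]}$ is nondegenerate, this radical lies in $\zfrak(\q)$. For such $Z$ we get $B(Z,Z)=0$; writing $Z=v-k$ (with $v\in\lf$ and $k\in\kf$ its $\g_1$-component) and using $Z\in\zfrak(\q)$ to deduce $[k,v]=0$, the operator $\ad Z=\ad v-\ad k$ is semisimple with purely imaginary spectrum, so $\operatorname{tr}(\ad Z^2)=0$ forces $\ad Z=0$, i.e. $Z\in Z(\g)$.

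Finally, for $\cf<\s^*$ I would use $\g_2=\tf\oplus(\s^*\cap\g_2)$: since $\cf=\g_2\cap\lf^{\perp}\subseteq\g_2\cap\tf^{\perp}$, it suffices to prove $\tf\perp_B(\s^*\cap\g_2)$ and that $B|_{\tf}$ is nondegenerate. The latter holds because any $t\in\operatorname{rad}(B|_\tf)$ is compact-acting with $B(t,t)=0$, hence central, hence $0$. Writing $\s^*\cap\g_2=\af_2+\n_2$ with $\n_2=\nilrad(\g)$ and $\ad(\af_2)$ semisimple, the pairing $B(\tf,\n_2)=0$ is already established, so the whole lemma reduces to showing $B(\tf,\af_2)=0$, and \textbf{this is the step I expect to be the main obstacle.} After arranging $[\tf,\af_2]=0$, pairing complex-conjugate joint eigenspaces of $\ad t$ and $\ad a$ on $\nilrad(\g)$ expresses $B(t,a)$ as a sum $-2\sum\beta\,\operatorname{Im}\mu$ over joint eigenvalues $(i\beta,\mu)$, so its vanishing is precisely the assertion that the purely imaginary Jordan parts of $\ad(\af_2)$ are carried by the compact direction $\tf$. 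I would derive both $[\tf,\af_2]=0$ and this alignment from the defining property of standard position, arranged as in the proof of Proposition~\ref{lem.s2} and reflecting that the standard modification of Lemma~\ref{modprops2} absorbs exactly the imaginary parts of $\ad_{\s^*}$ into the compact factor; making this precise in the general, not necessarily completely solvable, setting is the heart of the argument.
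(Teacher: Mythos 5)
Your arguments for $\nilrad(\g)<\cf$, for $\cf<\g_2$, and for $(\g_1+\lf)\cap\cf<Z(\g)$ are correct, and considerably more detailed than the paper, which dismisses these as ``elementary properties of the Killing form'' (indeed $\nilrad(\g)\subseteq\g^{\perp}$ and $B(\g_1,\g_2)=0$ are standard consequences of Cartan's criterion, and your treatment of the radical of $B$ on the reductive algebra $\g_1+\lf$ via elliptic elements is sound). The genuine gap is the remaining inclusion $\cf<\s^*$, which you explicitly leave open after reducing it to $B(\tf,\af_2)=0$. Worse, the reduction itself leans on the decomposition $\g_2=\tf\oplus\af_2\oplus\n_2$ with $\ad(\af_2)$ semisimple, which the paper establishes (Propositions~\ref{lem.s1} and~\ref{lem.s2}) only for \emph{completely solvable} $S$, whereas Lemma~\ref{centralizer} is invoked for a general simply-connected solvable $R$; so even the setup of your final step is not available here.

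The missing inclusion requires none of this structure: it falls out of the definition of standard position. By Proposition~\ref{stdpos}(ii), $\s^*$ \emph{is} the orthogonal complement of $\lf\cap\ff$ in $\ff$ relative to the Killing form $B_\ff$ of $\ff$. Once you have $\cf\subseteq\g_2\subseteq\ff$, take $X\in\cf$ and $Y\in\lf\cap\ff$. Since $X\in\g_2=\rad(\g)$, the operator $\ad_\g X$ maps $\g$ into $\nilrad(\g)\subseteq\ff$, so $\ad X\,\ad Y$ carries both $\g$ and $\ff$ into $\nilrad(\g)$, and its traces on $\g$, on $\ff$, and on $\nilrad(\g)$ all coincide; hence $B_\ff(X,Y)=B_\g(X,Y)=0$. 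Therefore $X$ lies in the $B_\ff$-orthocomplement of $\lf\cap\ff$ in $\ff$, which is $\s^*$, giving $\cf<\s^*\cap\g_2$. This replaces your entire final paragraph; in particular the unproved claim $B(\tf,\af_2)=0$ is never needed.
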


\begin{proof}
The fact that $\cf<\s^*$ follows from Proposition~\ref{stdpos} since $S^*$ is in standard position.  The other assertions in (i) arise from elementary properties of the Killing form.

\end{proof}

\begin{lemma}\label{bfrak} There exists a complement $\bfrak$ of $\nilrad(\g)$ in $\cf$ such that:
\begin{enumerate}
\item $[\bfrak, \g_1+\lf]=0$.    In particular, $\bfrak<C_\g(\lf)$.   
\item $\{\ad_\g(X)_s: X\in\bfrak\}$ forms an abelian subspace of $\Der^L(\g)$, necessarily commuting with $\ad_\g(\g_1+\lf)$.
\end{enumerate}

\begin{remark}
While $\g$ is not necessarily an algebraic Lie algebra (i.e., the Lie algebra of an algebraic group), this lemma is inspired by the algebraic Lie algebra setting.    For algebraic Lie algebras $\uf$, one always has an abelian complement $\bfrak$ of $\nilrad(\uf)$ in $\uf_2$ that commutes with a given semisimple Levi factor and that satisfies the stronger conditions that the elements of $\ad_\uf\,\bfrak$ are semisimple.   \end{remark}

\end{lemma}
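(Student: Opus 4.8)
The plan is to treat the two conditions in turn, obtaining (1) from complete reducibility of a reductive group action and (2) from the algebraic structure of $\Der^L(\g)$, paralleling the proof of Proposition~\ref{lem.s2}.

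First I would record the structural facts that drive everything. Since $\g_2=\rad(\g)$ we have $[\g,\g_2]\subseteq\nilrad(\g)$, and Lemma~\ref{centralizer} gives $\nilrad(\g)\subseteq\cf\subseteq\s^*\cap\g_2$; hence $[\cf,\cf]\subseteq\nilrad(\g)$ (so $\cf$ is a solvable subalgebra, as $\nilrad(\g)\subseteq\cf$) and $[\g_1+\lf,\cf]\subseteq[\g,\g_2]\subseteq\nilrad(\g)$. Moreover $\cf$ is $\ad(\g_1+\lf)$-invariant: for $Y\in\g_1+\lf$, $X\in\cf$, the Killing form gives $B([Y,X],Z)=-B(X,[Y,Z])=0$ for all $Z\in\g_1+\lf$, since $\g_1+\lf$ is a subalgebra. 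Thus the reductive subgroup $G_1L$ (reductive by Proposition~\ref{stdpos}(i), with $L$ compact as an isometry isotropy group) acts on $\cf$ via $\Ad$, preserving $\nilrad(\g)$ and acting trivially on $\cf/\nilrad(\g)$.

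For condition (1) I would invoke complete reducibility: decomposing $\cf$ into $G_1$-isotypic components (a canonical, hence $L$-invariant, decomposition since $G_1$ is semisimple) and using that the compact group $L$ acts completely reducibly on each multiplicity space, $\cf$ is a completely reducible $G_1L$-module, so $\nilrad(\g)$ has an invariant complement $\bfrak$. Because $G_1L$ acts trivially on $\cf/\nilrad(\g)$ and $\bfrak\cong\cf/\nilrad(\g)$ as a module, it acts trivially on $\bfrak$; that is $[\bfrak,\g_1+\lf]=0$, and in particular $\bfrak\subseteq C_\g(\g_1+\lf)$. Note too that for $X\in\bfrak$ we have $\ad_\g(X)|_{\lf}=0$, so $\ad_\g(X)\in\Der^L(\g)$, and by Lemma~\ref{D}(v) so does each Jordan part; since $\ad_\g(X)_s$ is a polynomial in $\ad_\g(X)$, it commutes with $\ad_\g(\g_1+\lf)$. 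This yields every assertion of (2) except the crucial one: that $\{\ad_\g(X)_s:X\in\bfrak\}$ mutually commute.

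This last point is the main obstacle. A generic invariant complement will not do: for non-algebraic $\g$, elements $X,X'$ with $[X,X']\in\nilrad(\g)$ (hence $[\ad X,\ad X']$ nilpotent) can have non-commuting semisimple parts, and a simultaneous conjugation cannot repair this. The key is that (1) only forces $\bfrak$ to lie in the solvable subalgebra $\mathfrak{w}:=\cf\cap C_\g(\g_1+\lf)$ and to complement $\mathfrak n_0:=\nilrad(\g)\cap\mathfrak{w}$ there; \emph{any} such complement satisfies (1). So I am free to re-choose $\bfrak$ inside $\mathfrak{w}$, reducing (2) to a statement about the solvable linear Lie algebra $\ad_\g(\mathfrak{w})$: find a complement of $\mathfrak n_0$ whose adjoint semisimple parts lie in one maximal torus. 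To arrange this I would pass, as the remark after the Lemma suggests, to the algebraic group $\Der^L(\g)$ (algebraic by Lemma~\ref{D}(ii)), form the algebraic hull of $\ad_\g(\mathfrak{w})$ (a solvable algebraic Lie algebra whose unipotent radical contains $\ad_\g(\mathfrak n_0)$), and select a maximal torus $\mathfrak t$; since $\Ad(G_1L)$ is reductive and normalizes $\ad_\g(\cf)$, Mostow's \cite[Corollary 5.1]{Mostow:FullyReducibleSubgrpsOfAlgGrps} lets me take $\mathfrak t$ invariant under $\Ad(G_1L)$, so it commutes with $\ad_\g(\g_1+\lf)$. The genuinely delicate step is then to choose $\bfrak$ with $\{\ad_\g(X)_s:X\in\bfrak\}\subseteq\mathfrak t$. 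The natural device is a Cartan subalgebra $\mathfrak h$ of $\mathfrak{w}$: on the generalized weight-space decomposition of $\g$ under the nilpotent algebra $\ad_\g(\mathfrak h)$ the semisimple parts act by scalar weights and hence commute, which handles $\mathfrak h$. One must then enlarge $\mathfrak h$ to a full complement of $\mathfrak n_0$ by elements of $\nilrad(\mathfrak{w})$ without spoiling commutativity of the semisimple parts—equivalently, showing that $\nilrad(\mathfrak{w})$ contributes only $\ad_\g$-nilpotent directions beyond $\mathfrak n_0$, or otherwise aligning their semisimple parts into $\mathfrak t$. Controlling these extra directions is, I expect, where the argument demands the most care and is the true obstacle.
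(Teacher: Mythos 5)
Your argument for condition (1) is correct and is essentially the paper's: $\ad_\g(\g_1+\lf)$ acts reductively on $\cf$, maps it into $\nilrad(\g)$, and therefore annihilates some complement $\bfrak_0$ of $\nilrad(\g)$ in $\cf$. You also correctly diagnose the real difficulty in (2): such a complement is far from unique, and for a generic choice the semisimple parts $\ad_\g(X)_s$, $X\in\bfrak_0$, need not commute with one another, since $[X,X']$ only lies in $\nilrad(\g)$. But your proposal does not close this gap. The route through the algebraic hull of $\ad_\g(\mathfrak{w})$, a Mostow-invariant maximal torus, and a Cartan subalgebra $\mathfrak h$ of $\mathfrak w$ is left incomplete at exactly the point you flag yourself: you never show that $\mathfrak h$ (or any enlargement of it by elements of $\nilrad(\mathfrak w)$) actually complements $\nilrad(\g)$ in $\cf$ with all semisimple parts landing in a single torus. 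As written, the proof of (2) is missing its essential step.

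The paper avoids all of this machinery with a greedy induction that you may find worth comparing against your plan. One builds $\bfrak=\bfrak_k$ one vector at a time: given a $j$-dimensional $\bfrak_j$ already satisfying (1) and (2), the subalgebra $\vf_j:=\adg(\g_1+\lf)+\spa\{\adg(X)_s:X\in\bfrak_j\}$ is reductive in $\Der^L(\g)$ (a reductive algebra plus commuting semisimple elements commuting with it) and maps $\cf$, indeed all of $\g_2$, into $\nilrad(\g)$, because $\adg(X)_s$ is a constant-free polynomial in $\adg(X)$ and $[\g,\g_2]<\nilrad(\g)$. Full reducibility of $\vf_j$ then yields a complement $\bfrak_j'$ of $\bfrak_j+\nilrad(\g)$ in $\cf$ that is $\vf_j$-invariant, hence annihilated by $\vf_j$. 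Any $X_{j+1}\in\bfrak_j'$ then satisfies $\adg(X)_sX_{j+1}=0$ for all $X\in\bfrak_j$, so $[\adg(X)_s,\adg(X_{j+1})]=\adg\bigl(\adg(X)_sX_{j+1}\bigr)=0$, and since $\adg(X_{j+1})_s$ is a polynomial in $\adg(X_{j+1})$, the new semisimple part commutes with all the old ones (and with $\adg(\g_1+\lf)$, which likewise annihilates $X_{j+1}$). This is the mechanism that replaces your unresolved ``aligning the semisimple parts into $\mathfrak t$'' step.
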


\begin{proof} Let $k=\dim(\cf)-\dim(\nilrad(\g))$.   We show by induction that for every $j=1,\dots, k$, there exists a $j$-dimensional subspace $\bfrak_j$ of $\cf$ disjoint from $\nilrad(\g)$ satisfying (i) and (ii) (with $\bfrak_j$ playing the role of $\bfrak$).  We can then take $\bfrak=\bfrak_k$.

Begin with any complement $\bfrak_0$ of $\nilrad(\g)$ in $\cf$ that commutes with $\g_1+\lf$.   Such a complement exists since $\ad_\g(\g_1+\lf)$ is reductive and maps $\cf$ into $\nilrad(\g)$.  Choose $X_1\in \bfrak_0$ and set $\bfrak_1=\spa(X_1)$.

Next suppose $\bfrak_j$ has been defined satisfying (i) and (ii).   Observe that $\vf:=\adg(\g_1+\lf)+\{\adg(X)_s:X\in\bfrak_j\}$ is a reductive subalgebra of $\Der^L(\g)$ that maps $\cf$ (in fact all of $\g_2$) into $\nilrad(\g)$ and annihilates $\g_1+\lf$.  Thus we may choose a complement $\bfrak'_j$ of $\bfrak_j +\nilrad(\g)$ in $\cf$ that is annihilated by $\vf$.  Choose $0\neq X_{j+1}\in\bfrak'_j$ and let $\bfrak_{j+1}=\bfrak_j +\R X_{j+1}$.   

\end{proof}

Let $\uf$ be the compactly embedded subalgebra of $\g$ given by
$$\uf= \kf+\lf +Z(\g).$$
Observe that $[\uf,\uf]=\kf_1<\lf$.   Choose a complement $\uf_0$ of $\lf+Z(\g)$ with $\uf_0< Z(\lf +\kf)$.    We then have vector space direct sums
\begin{equation}\label{vsds}\g=\lf +\s_1 +\cf+\uf_0=\lf+\s_1+\nilrad(\g)+\bfrak+\uf_0\end{equation}
and $ad_\g(\uf_0)<\Der^L(\g)$.  
Define 
$$\w=\adg(\uf_0) +\{\adg(X)_s^{iR}: X\in\bfrak\}<\Der^L(\g).$$
Observe that $\w$ is an abelian compactly embedded subalgebra of $\Der^L(\g)$ all of whose elements act fully reducibly with purely imaginary eigenvalues on $\g$.   Thus we may choose a (maximal if desired) compact connected subgroup $H$ of $\Aut^L(G)$ whose Lie algebra contains $\w$.    Set 
$$G'=G\rtimes H \mbox{\,\,\,and\,\,\,} L'=L\rtimes H.$$
We have $G'=L'R$ with $R\cap L'$ trivial.  Thus we may choose a Riemannian metric $g'$ on $G'/L'$, equivalently on $R$, so that $G'$ acts isometrically.   

\begin{nota} Write elements of $\g'$ as pairs $(X,\delta)$ with $X\in \g$ and $\delta\in \h$.

\end{nota} 

\begin{lemma}\label{finallemma} Let $$\w'=\{(X,-\ad_\g(X)_s^{iR}): X\in \bfrak\} + \{(X,-\ad_\g(X)): X\in\uf_0\}$$ and let $$\s=(\s_1+ \nilrad(\g))\times \{0\} + \w'.$$
Then 
\begin{enumerate}
\item $\g' =\lf' +\s$ and $\lf'\cap\s=\{0\}$.
\item $\s$ is a completely solvable Lie algebra.
\item The connected subgroup $S$ of $G'$ with Lie algebra $\s$ acts simply transitively on $G'/L'$.
\end{enumerate}\end{lemma}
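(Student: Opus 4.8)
The plan is to carry out everything inside $\g'=\g\rtimes\h$, where $\h=\mathrm{Lie}(H)$ acts on $\g$ by the derivations in $\Der^L(\g)$, so that the bracket is
$$[(X,\delta),(Y,\eta)]=([X,Y]+\delta(Y)-\eta(X),\,[\delta,\eta]).$$
Writing $\mf:=\s_1+\nilrad(\g)+\bfrak+\uf_0$, I would first recognize $\s$ as the graph $\{(X,\varphi(X)):X\in\mf\}$ of the linear map $\varphi:\mf\to\w$ that vanishes on $\s_1+\nilrad(\g)$, sends $B\mapsto-\ad_\g(B)_s^{i\R}$ on $\bfrak$, and sends $U\mapsto-\ad_\g(U)$ on $\uf_0$; linearity on $\bfrak$ is exactly the content of Lemma~\ref{bfrak}(ii). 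Since $\uf_0$ is compactly embedded, $\ad_\g(U)=\ad_\g(U)_s^{i\R}$, so uniformly $\varphi(X)=-\ad_\g(X)_s^{i\R}$, the same formula as in the modification of Lemma~\ref{modprops2}; this identification drives the whole argument. Statement (i) is then immediate: if $(X,\delta)\in\lf'\cap\s$ then $X\in\lf\cap\mf$, which is $0$ by directness of \eqref{vsds}, forcing $\delta=\varphi(0)=0$; and since $\dim\lf'+\dim\s=(\dim\lf+\dim\h)+\dim\mf=\dim\g+\dim\h=\dim\g'$, we get $\g'=\lf'\oplus\s$.

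For the subalgebra part of (ii), the key simplification is that the projection $p_2:\g'\to\h$, $(X,\delta)\mapsto\delta$, is a Lie algebra homomorphism with $p_2(\s)=\w$ abelian; hence the $\h$-component of any bracket of two elements of $\s$ vanishes, and closure reduces to showing that $Z:=[X,Y]+\varphi(X)(Y)-\varphi(Y)(X)$ lies in $\s_1+\nilrad(\g)$ for all $X,Y\in\mf$. I would check this case by case: on $\s_1+\nilrad(\g)$ it is the ordinary bracket; for $Y=U\in\uf_0$ the term $-\varphi(U)(X)=\ad_\g(U)(X)$ exactly cancels $[X,U]$; and whenever $\bfrak$ is involved the surviving terms are absorbed into $\nilrad(\g)$ using that $\ad_\g(\bfrak)_s$ annihilates $\g_1+\lf$ and maps the radical into $\nilrad(\g)$ (Lemma~\ref{bfrak}), that $\uf_0<Z(\lf+\kf)\subset\g_1+\lf+Z(\g)$, and that both $[\g_2,\g_2]$ and $Z(\g)$ lie in $\nilrad(\g)$.

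Complete solvability then follows from the same homomorphism: $\q:=\ker(p_2|_\s)\cong\s_1+\nilrad(\g)$ is a completely solvable ideal, because $\af_1$ acts on $\g$ with real eigenvalues (it is the split Iwasawa torus) while $\n_1$ and $\nilrad(\g)$ act nilpotently, and $\s/\q\cong\w$ is abelian; for $a=(X,\varphi(X))$ the operator $\ad_\s(a)$ is zero on the quotient and acts on $\q$ as $(\ad_\g(X)+\varphi(X))|_{\s_1+\nilrad(\g)}=(\ad_\g(X)_n+\ad_\g(X)_s^{\R})|_{\s_1+\nilrad(\g)}$, which has real spectrum, so every $\ad_\s(a)$ has real eigenvalues. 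For (iii) I note $G'/L'\cong G/L=\M$, which is simply connected since $R$ is a simply-connected solvable group acting simply transitively on it. Because $\s\oplus\lf'=\g'$, the orbit map $S\to G'/L'$ is a local diffeomorphism, so every $S$-orbit is open; the complement of one orbit is a union of open orbits, hence closed, and connectedness forces transitivity. The stabilizer of the base point has Lie algebra $\s\cap\lf'=0$, so it is discrete, and the covering $S\to G'/L'\cong S/\Gamma$ onto a simply-connected space must be a diffeomorphism; thus $S$ acts simply transitively.

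The main obstacle is the closure computation, and specifically the $\bfrak$--$\bfrak$ case: there the cancellation is only partial, and one must know that each of $\ad_\g(B)_n$, $\ad_\g(B)_s^{\R}$, and $\ad_\g(B)_s^{i\R}$ carries $\bfrak\subset\cf$ into $\nilrad(\g)$, together with commutativity of the family $\{\ad_\g(B)_s:B\in\bfrak\}$ to kill the $\h$-component. These are precisely the properties engineered into $\bfrak$ (and $\uf_0$) by Lemma~\ref{bfrak}, so the lemma really hinges on those choices; once closure is secured, the complete solvability and the simple transitivity are essentially bookkeeping.
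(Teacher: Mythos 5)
Your treatment of (i), of (iii), and of the solvability half of (ii) is essentially the paper's own argument: the same case-by-case bracket computation resting on Lemma~\ref{bfrak}, the vanishing of the $\h$-component because $\w$ is abelian, the observation that the relevant operators restrict to derivations of the solvable ideal $\g_2$ and hence map it into $\nilrad(\g)$, and the directness of \eqref{vsds} together with simple-connectivity of $G'/L'$ for (i) and (iii). The paper disposes of complete solvability with ``one easily checks''; you attempt the check, and that is where there is a genuine gap. The identity you lean on, $\varphi(X)=-\ad_\g(X)_s^{i\R}$ for \emph{every} $X\in\mf$ (hence $\ad_\g(X)+\varphi(X)=\ad_\g(X)_n+\ad_\g(X)_s^{\R}$), is false in general: $\varphi$ is defined piecewise and extended linearly, whereas $X\mapsto\ad_\g(X)_s^{i\R}$ is additive only across \emph{commuting} summands, and $\ad_\g(X_0)$ for $X_0\in\n_1+\nilrad(\g)$ need not commute with $\ad_\g(B)$ for $B\in\bfrak$ (their bracket is $\ad_\g([X_0,B])$ with $[X_0,B]\in\nilrad(\g)$ but not necessarily zero). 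So for $X=X_0+B+U$ the operator you actually need to control is $\bigl(\ad_\g(X_0)+\ad_\g(B)_n+\ad_\g(B)_s^{\R}\bigr)\big|_{\s_1+\nilrad(\g)}$, a sum of non-commuting operators each with real spectrum; such a sum need not have real spectrum, so the step does not close as written. (A smaller slip in the same passage: $\ker(p_2|_\s)$ can be strictly larger than $(\s_1+\nilrad(\g))\times\{0\}$, e.g.\ if $\ad_\g(B)_s^{i\R}=0$ for some $0\neq B\in\bfrak$, so one should work with the ideal $(\s_1+\nilrad(\g))\times\{0\}$ directly rather than with $\ker(p_2|_\s)$.)

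The conclusion is recoverable, but not by handling a general $X\in\mf$ at once. Since $\s$ is already known to be solvable, Lie's theorem makes the roots of $\s$ linear functionals, so it suffices to check that $\ad_\s(a)$ has real eigenvalues for $a$ running over a spanning set; one may therefore treat $\af_1\times\{0\}$, $\n_1\times\{0\}$, $\nilrad(\g)\times\{0\}$, $\{(B,\varphi(B)):B\in\bfrak\}$ and $\{(U,-\ad_\g(U)):U\in\uf_0\}$ separately. Each of these is clean: $\ad_{\g'}$ of an element of $\af_1\times\{0\}$ is block-triangular with diagonal $\ad_\g(A)$, whose eigenvalues are real restricted weights; elements of $\n_1\times\{0\}$ and $\nilrad(\g)\times\{0\}$ act nilpotently; $\ad_\s\bigl((U,-\ad_\g(U))\bigr)=0$; and $\ad_\s\bigl((B,\varphi(B))\bigr)$ is block-triangular relative to $(\s_1+\uf_0)\oplus\bfrak\oplus\nilrad(\g)$ with diagonal blocks $0$, $0$, and $(\ad_\g(B)_n+\ad_\g(B)_s^{\R})|_{\nilrad(\g)}$, whose eigenvalues are the real parts of those of the single operator $\ad_\g(B)$. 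With that replacement your proposal matches the intended argument.
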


 \begin{proof}  
(i) follows from the facts that Equation~\ref{vsds} is a direct sum and that 
$\bfrak +\uf_0< \lf'+\w'$.   
 
(ii) We first show that $\s$ is a solvable subalgebra.   By Lemma~\ref{bfrak}, $\w'$ commutes with $\s_1\times \{0\}$ and it normalizes $\nilrad(\g)\times \{0\}$, so we need only show that $[\w',\w']_{\g'} <\nilrad(\g)  \times \{0\}$.  (We are writing $[\cdot,\cdot]_{\g'}$ for the bracket in $\g'$.) Let $U=(X,-\adg(X)_s^{iR})$ and $V=(Y,-\adg(Y)_s^{iR})$ be in $\w'$.       (Aside:  If $X\in \uf_0$, then $\adg(X)_s^{iR}=\adg(X)$.)
Then $$[U,V]_{\g'}= ([X,Y] -\adg(X)_s^{iR}(Y) +\adg(Y)_s^{iR}(X))\times \{0\}.$$
Now $\adg(X)_s^{iR}$ and $\adg(X)_s^{iR}$ restrict to derivations of the solvable Lie algebra $\g_2$ and thus must map it into the nilradical.  Thus $[U,V]_{\g'}\in\nilrad(\g)\times\{0\}$ and $\s$ is a solvable Lie algebra, and one easily checks that it is completely solvable.

(iii) follows from (i) and the fact that $G'/L'$ is simply-connected.

\end{proof}

By Lemma~\ref{finallemma}, $S$ admits a left-invariant metric isometric to the left-invariant metric $g'$ on $R$, so Theorem~\ref{thm: equiv classes} guarantees that $S$ is the completely solvable Lie group in the equivalence class of $R$, thus completing the proof of Proposition~\ref{prop:  reduction to compl solv solvsol}.

The following example shows that the conclusion of the second item in Theorem~\ref{thm.R admits max sym}  cannot be strengthened to almost maximal symmetry of $g_0'$.   

\begin{example}\label{R not admit almost max}
We first construct a completely solvable simply-connected Lie group $S$ that admits an almost maximally symmetric left-invariant metric.  We then give a modification $R$ of $S$ for which the resulting infinitesimally maximally symmetric metric on $R$ (whose existence is guaranteed by Theorem~\ref{thm.R admits max sym}) is not almost maximally symmetric.   By Proposition~\ref{no type one}, it will follow that $R$ does not admit any almost maximally symmetric metric.

Define the Lie algebra $\s$ to be $\af \ltimes \R^6$, with $\dim(\af)=1$, where the action of $\af$ on $\R^6$ is given as follows:   Fix $A\neq 0$ in $\af$, let $\{E_1,\dots, E_6\}$ be a basis of $\R^6$ and set 
$$\ad(A)_{|\R^6}=\begin{bmatrix}\rm{I}_2 & & \\&-\rm{I}_2 &\\& & 0_2\end{bmatrix}$$
relative to the basis of $\R^6$ above.    Here $\rm{I}_2$ and $0_2$ are the $2\times 2$ identity matrix and the $2\times 2$ zero matrix, respectively.   The Lie algebra $\s$ is completely solvable and unimodular with nilradical $\n=\R^6$.   Let $S$ be the simply-connected Lie group with Lie algebra $\s$.  

Define an action $\rho$ of the group $\SO(2)\times\SO(2)\times \SO(2)$ on $\s$ fixing $\af$ pointwise, leaving $\R^6$ invariant and acting on $\R^6$ as 
$$\begin{bmatrix}\SO(2)&&\\&\SO(2)&\\&&\SO(2)\end{bmatrix}$$
relative to the basis above.  Then this action is by automorphisms of $\s$ and, letting $H=\rm{Image}(\rho)$, one easily checks that  
$H$ is a maximal compact subgroup of $\Aut_0(\s)$.     We use the same notation $H$ for the corresponding maximal compact subgroup of $\Aut_0(S)$.  

The left-invariant metric $g_0$ on $S$ defined by declaring the basis 
$$\B_\s:=\{A, E_1,\dots, E_6\}$$ to be orthonormal is $H$-invariant.   Since $S$ is completely solvable and unimodular, Corollary~\ref{cor.pos} and its proof impliy that $g_0$ is almost maximally symmetric and 
\begin{equation}\label{sh}\Isom_0(S,g_0)=S\rtimes H.\end{equation}

In anticipation of what will follow below, we consider a second metric $g$ on $S$ defined by declaring the basis
$$\B'_\s:=\{A, E_1, \dots, E_4, 2 E_5, E_6\}$$ to be orthonormal.
Observe that $g$ is of the form $\tau^*g_0$ with $\tau\in \Aut(S)$ and thus is also almost maximally symmetric.    Its isometry group is conjugate to that of $g_0$ via $\tau$ and is given by 
\begin{equation}\label{sh'}\Isom_0(S,g_0)=S\rtimes H'\end{equation}
where $H'$ is defined analogously to $H$ but with its restriction to $\R^6$ given by 
$$\begin{bmatrix}\SO(2)&&\\&\SO(2)&\\&&\SO(2)'\end{bmatrix}$$
where $\SO(2)'$ consists of all matrices of the form 
$$\begin{bmatrix}\cos(t)&-2\sin(t)\\\frac{1}{2}\sin(t)&\cos(t)\end{bmatrix}.$$

Define a modification $\rf=(\Id +\varphi)(\s)$ of $\s$ with modification map $\varphi$ satisfying $\varphi(A) =\varphi(E_1)=\dots =\varphi(E_4)=0$, while
$\varphi(E_5)=E_1\wedge E_2$ and $\varphi(E_6)=E_3\wedge E_4$.  (Here $E_1\wedge E_2$ denotes the linear transformation acting on span$(E_1,E_2)$ as $\left[\begin{smallmatrix}0&-1\\1&0\end{smallmatrix}\right]$ and similarly for $E_3\wedge E_4$.)   Observe that $\varphi(\s) < \h\cap \h'=\Der_{\sk}(S,g_0)\cap \Der_{\sk}(S,g).$    Thus the simply-connected Lie group $R$ with Lie algebra $\rf$ acts simply transitively by isometries on both $(S,g_0)$ and $(S,g)$.   We denote by $g_0'$ and $g'$ the induced left-invariant metrics on $R$.   By Theorem~\ref{thm.R admits max sym}, both $g_0'$ and $g'$ are infinitesimally maximally symmetric.   To show that $g_0$ is not almost maximally symmetric, it is enough to show the following:

\smallskip
\noindent \emph{Claim}.  $\Isom_0(R,g) \neq\sigma\Isom_0(R,g_0)\sigma^{-1}$ for any $\sigma\in \Aut(R)$.   

We prove the claim.  Let $B_1$ and $B_2$ be the modifications $B_1:=(\Id+\varphi)(E_5)$ and $B_2:=(\Id+\varphi)(E_6)$ of $E_5$ and $E_6$. We have $\rf=\af'\ltimes \R^4$, where $\af'$ is a 3-dimensional abelian subalgebra with basis $\{A, B_1,B_2\}$ and $\R^4$ has basis $\{E_1,\dots, E_4\}$.  The restrictions of $\ad(B_1)$ and $\ad(B_2)$ to $\R^4$ are given by $E_1\wedge E_2$ and $E_3\wedge E_4$, respectively.   Let $\sigma\in \Aut(R)$.   Then $\sigma_*$ normalizes the nilradical $\R^4$ of $\rf$.   Since $\ad(\sigma_*(A))$ must be semisimple with eigenvalues $0, \pm 1$, we must have $\sigma_*(A)=\pm A$ mod $\R^4$.   It then follows that $\sigma_*$ must either preserve or interchange the $\pm 1$ eigenspaces span$(E_1, E_2)$ and span$(E_3, E_4)$ of $\ad(A)$.   Since each of $\ad(\sigma_*(B_1))$ and $\ad(\sigma_*(B_2))$ must have eigenvalues $0,\pm i$, we must have 
\begin{equation}\label{sigB}\sigma_*(B_1), \sigma_*(B_2) \in \{\pm B_1, \pm B_2\} \mbox{\,\,mod\,\,}\R^4.\end{equation}

The metrics $g_0'$ and $g'$ correspond to the inner products on $\rf$ with orthonormal bases 
$\{A, B_1, B_1, E_1, \dots E_4\}$ and $\{A, 2B_1, B_2, E_1,\dots, E_4\}$.   Since $\Isom_0(R,g_0')=\Isom_0(S,g_0)$ and $\Isom_0(R,g')=\Isom_0(S,g)$, it is now straightforward to verify the claim using Equation~\eqref{sigB}, the fact that $B_1$ and $B_2$ correspond to $E_5$ and $E_6$ under the modification, and the computation of the isometry groups of $\Isom_0(S<g_0)$ and $\Isom_0(S,g)$ given in Equations~\eqref{sh} and \eqref{sh'}.

Concerning Corollary~\ref{cor.uniqueness}, observe that $(S,g_0)$ is a solvsoliton.   Since $(S,g_0)$, $(S,g)$, $(R,g'_0)$ and $(R,g')$ are all isometric, they are all Ricci solitons.    The fact that $g'$ and $g$ are not related by pullback by an automorphism shows that the corollary cannot be strengthened.

\end{example}

\section{Ricci solitons on solvable Lie groups}\label{subsec: background on solvsolitons}
A Riemannian metric is a Ricci soliton if it satisfies the equation $$ric_g = cg + L_X g$$ for some $c\in\mathbb R$ and vector field $X$. Under the Ricci flow, a Ricci soliton metric changes only by diffeomorphisms and homotheties.   We say a  Ricci soliton is   \emph{expanding}, \emph{steady}, or \emph{shrinking} if $c$ is negative, zero, or positive, respectively.   Homogeneous steady Ricci solitons are necessarily flat (i.e., sectional curvature zero); homogeneous, shrinking solitons are necessarily the product of  compact, homogeneous Einstein spaces (with positive Ricci curvature) together with   Euclidean factors \cite{Petersen-Wylie:OnGradientRicciSolitonsWithSymmetry}.

On a Lie group $S$, one may consider the special scenario that a left-invariant metric evolves under Ricci flow by homotheties and automorphisms, not simply diffeomorphisms.  In this case, the Ricci soliton condition becomes
	$$\Ric  = c Id + \frac 1 2 (D+D^t)$$
for some $D\in \mathrm{Der} (\mathfrak s)$.  Here we have presented the equation using the Ricci operator $\Ric_g$ (i.e., the Ricci tensor is given by $ric_g(U,V) = g(\Ric_g(U),v))$.  When the derivation $D$ is symmetric, the above simplifies to $\Ric = c Id +D$ and the metric is called an \emph{algebraic} Ricci soliton.

\begin{defin}~ 
\begin{enumerate} 
\item  A Riemannian solvmanifold for which the Riemannian metric is a Ricci soliton will be called a \emph{Ricci soliton solvmanifold}.   Such solitons are always either flat or expanding.

\item A Riemannian solvmanifold for which the metric is an algebraic Ricci soliton is called a \emph{solvsoliton}.   

\end{enumerate}

\end{defin}

\begin{thm}[B\"ohm--Lafuente]\label{thm: BL}  Every homogeneous expanding Ricci soliton is isometric to a Ricci soliton solvmanifold.
\end{thm}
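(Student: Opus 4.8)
The plan is to deduce the statement from three ingredients already surveyed in the introduction: the algebraicity of homogeneous Ricci solitons, the Einstein extension construction, and the resolution of the Alekseevskii conjecture. Throughout, \emph{expanding} means the soliton constant $c$ is negative, which is precisely the sign that will trigger the Alekseevskii input, so I would first record that the soliton equation $\Ric = c\,\Id + \tfrac{1}{2}(D + D^t)$ is to be read with $c<0$.

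First I would reduce the given homogeneous expanding Ricci soliton $(M,g)$ to an algebraic normal form. By the result that homogeneous Ricci solitons are algebraic \cite{Jablo:HomogeneousRicciSolitonsAreAlgebraic}, one may choose a transitive presentation $M = G/K$ for which the Ricci operator takes the algebraic form above, with $D$ a derivation adapted to a reductive decomposition. This removes the dependence on the particular (possibly large) isometry group and puts the soliton into exactly the shape to which the extension machinery applies. Next I would pass to the Einstein extension: by the structure theory of Lauret and Lafuente \cite{LauretLafuente:StructureOfHomogeneousRicciSolitonsAndTheAlekseevskiiConjecture}, an algebraic expanding homogeneous Ricci soliton canonically determines a one-dimensional extension $(M_E, g_E)$ that is a \emph{homogeneous} Einstein space, and because $c<0$ its Einstein constant is negative, so $(M_E, g_E)$ has non-positive (indeed strictly negative) Ricci curvature. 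In the model case of solvsolitons this is exactly Lauret's extension recalled before Theorem~\ref{main}, where the soliton appears as a codimension-one normal subgroup of the extended solvable group.

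Now I would invoke the Alekseevskii conjecture, proved by B\"ohm and Lafuente \cite{BL2021} (see also \cite{Bohm-Lafuente:HomogeneousEinsteinMetricsOnEuclideanSpacesAreEinsteinSolvmanifolds}): every homogeneous Einstein manifold of non-positive Ricci curvature is isometric to a simply-connected Einstein solvmanifold. Applied to $(M_E, g_E)$, this produces a simply-connected solvable Lie group $R_E$ with a left-invariant Einstein metric and an isometry $(M_E, g_E) \cong (R_E, g_E)$. Finally I would descend: the extension direction corresponds to a one-parameter subgroup of $R_E$, and the base of the extension is recovered as a codimension-one normal subgroup $R < R_E$ carrying the induced left-invariant metric. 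This $R$ acts simply transitively by isometries on $M$, so $(M,g)$ is realized as a solvmanifold; since the Ricci soliton condition is intrinsic to $(M,g)$, it is a Ricci soliton solvmanifold, as claimed.

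The main obstacle is the Alekseevskii conjecture itself: the surrounding steps are essentially structural bookkeeping — selecting the algebraic presentation and organizing the codimension-one descent — whereas identifying the Einstein extension with a solvmanifold is the genuinely deep input of \cite{BL2021}. The one further point requiring care is the descent step, where one must check that the solvable structure on $R_E$ restricts to a transitive solvable action on the base, so that the conclusion is a solvmanifold structure on the \emph{soliton} and not merely on its extension.
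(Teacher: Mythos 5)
Your proposal is correct and follows essentially the same route as the paper, which proves this theorem by exactly the citation chain you describe: algebraicity of homogeneous Ricci solitons \cite{Jablo:HomogeneousRicciSolitonsAreAlgebraic}, the structure/extension theory of \cite{LauretLafuente:StructureOfHomogeneousRicciSolitonsAndTheAlekseevskiiConjecture}, and the resolution of the Alekseevskii conjecture \cite{BL2021}. The paper gives no more detail than this combination of references, so your elaboration of the descent from the Einstein extension back to the soliton is a reasonable filling-in of the same argument.
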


This theorem is a consequence of the resolution of the Alekseevsky Conjecture \cite{BL2021} and the works \cite{LauretLafuente:StructureOfHomogeneousRicciSolitonsAndTheAlekseevskiiConjecture,
Jablo:HomogeneousRicciSolitonsAreAlgebraic}
which deal with the structure of homogeneous Ricci soliton metrics.

\begin{prop}\label{solvsol uniq}   Not every Ricci soliton solvmanifold is a solvsoliton.  However:
	\begin{enumerate}
	\item \cite{Jablo:HomogeneousRicciSolitonsAreAlgebraic} Every homogeneous Ricci soliton on a solvable Lie group is isometric to an algebraic one,  on a possibly different solvable Lie group; in particular every Ricci soliton solvmanifold is isometric to a solvsoliton.   
	\item \cite{Jablo:HomogeneousRicciSolitonsAreAlgebraic,Jablo:HomogeneousRicciSolitons} 
	On a completely solvable group, a Ricci soliton is necessarily a solvsoliton.   Moreover, every Ricci soliton solvmanifold is isometric to a completely solvable solvsoliton.
\end{enumerate}
\end{prop}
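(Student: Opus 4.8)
The plan is to treat this proposition as an assembly of the deep algebraicity theorems of Jablonski together with the structure theory of Section~2, rather than as a self-contained argument. The genuinely hard analytic input---that a homogeneous Ricci soliton can be realized as an \emph{algebraic} Ricci soliton---is quoted directly from \cite{Jablo:HomogeneousRicciSolitonsAreAlgebraic} and \cite{Jablo:HomogeneousRicciSolitons}. Thus the work here divides into three pieces: produce a counterexample witnessing the opening sentence, and then supply the elementary isometry-invariance and standard-position arguments that convert the cited theorems into the two stated corollaries.

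For the opening claim that not every Ricci soliton solvmanifold is a solvsoliton, I would exhibit a Ricci soliton on a solvable group that is not algebraic. The natural source is the modification machinery of Section~2: beginning from a completely solvable solvsoliton $(S,g_0)$, Theorem~\ref{thm: equiv classes}(ii) produces a simply-connected solvable group $R$ in the equivalence class of $S$ carrying a left-invariant metric $g_0'$ with $(R,g_0')$ isometric to $(S,g_0)$. Choosing $R$ to be \emph{not} completely solvable---as in Example~\ref{R not admit almost max}, where the modified generators have $\ad$ with purely imaginary eigenvalues---yields a Ricci soliton $(R,g_0')$ on a non-completely-solvable group, since being a Ricci soliton is isometry-invariant. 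The delicate step is to verify that $(R,g_0')$ is not a solvsoliton, i.e.\ that its Ricci operator cannot be written as $c\,\Id+D$ for a symmetric $D\in\Der(\rf)$.

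For part (i) I would invoke the theorem of \cite{Jablo:HomogeneousRicciSolitonsAreAlgebraic} directly, and then observe that a Ricci soliton solvmanifold is, by definition, a Riemannian solvmanifold whose metric is a Ricci soliton, hence a homogeneous Ricci soliton admitting a transitive solvable isometry group; realizing it as a left-invariant soliton on the simply-transitive standard-position group of Proposition~\ref{stdpos}, the cited theorem produces an isometric algebraic Ricci soliton on a solvable Lie group, which is a solvsoliton. For part (ii) I would invoke the result of \cite{Jablo:HomogeneousRicciSolitons,Jablo:HomogeneousRicciSolitonsAreAlgebraic} that on a completely solvable group every Ricci soliton is automatically algebraic. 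For the ``moreover'' clause I would combine this with the equivalence-class theory: given a Ricci soliton solvmanifold realized as $(R,g)$ with $R$ simply-connected solvable, pass to the unique completely solvable representative $S=\sigma(R)$ of Theorem~\ref{thm: equiv classes}; by Theorem~\ref{thm: equiv classes}(ii) there is a metric $g'$ on $S$ with $(S,g')$ isometric to $(R,g)$, so $g'$ is a Ricci soliton on a completely solvable group and therefore a solvsoliton, exhibiting $\M$ as isometric to the completely solvable solvsoliton $(S,g')$.

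The main obstacle lies essentially outside the present paper: the algebraicity theorems of Jablonski carry the entire analytic burden, and everything else is bookkeeping built on the structure theory already in hand. Within the argument I am proposing, the single place demanding genuine care is the opening counterexample, where one must confirm that the modified metric on the non-completely-solvable group truly fails the algebraic soliton equation, rather than merely failing the stronger symmetry conditions analyzed elsewhere in the paper.
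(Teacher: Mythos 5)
Your handling of items (i) and (ii) is correct and is essentially the paper's own approach: Proposition~\ref{solvsol uniq} is presented there as a compendium of cited theorems with no written proof, and the only glue the paper ever supplies (later, in the proof of Theorem~\ref{thm: solitons and equiv classes}) is exactly the reduction you describe -- quote Jablonski's algebraicity results and pass to the completely solvable representative of the equivalence class via Theorem~\ref{thm: equiv classes}(ii). One cosmetic caveat: Proposition~\ref{stdpos} is stated for \emph{simply-connected} solvmanifolds, so your appeal to standard position in part (i) should either restrict to that case or lean entirely on the cited theorem, which is what the paper does.

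The one genuine gap is the opening sentence. You correctly isolate the delicate point -- verifying that the modified metric $(R,g_0')$ fails the equation $\Ric = c\,\Id + D$ with $D\in\Der(\rf)$ symmetric -- but you leave it unresolved, and it does not follow merely from $R$ failing to be completely solvable. A way to close it using only tools already in the paper: in Example~\ref{R not admit almost max} the group $R$ carries two isometric, non-flat left-invariant Ricci solitons $g_0'$ and $g'$ that are \emph{not} related by pullback by an automorphism of $R$ (this is the Claim proved in that example). If $(R,g_0')$ were a non-flat solvsoliton, the uniqueness theorem of B\"ohm--Lafuente \cite{Bohm-Lafuente:TheRicciFlowOnSolvmanifoldsOfRealType} quoted in the introduction would force every left-invariant Ricci soliton on $R$ to agree with $g_0'$ up to homothety and automorphism, and in particular $g'$ and $g_0'$ would be automorphism-related -- a contradiction. (Alternatively one can simply cite the explicit examples in \cite{Jablo:HomogeneousRicciSolitons}, which is all the paper implicitly does for this sentence.)
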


\subsection{Existence and uniqueness results}

\begin{prop}\label{solvsol uniq'}\cite{Lauret:SolSolitons} Up to automorphism and scaling, a simply-connected solvable Lie group can admit at most one solvsoliton metric.

\end{prop}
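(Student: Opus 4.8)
The plan is to prove uniqueness by recasting solvsolitons as distinguished points of a single orbit under a reductive group and then invoking the uniqueness of such points up to a maximal compact subgroup. First I would pass from ``varying the metric'' to ``varying the bracket.'' Fix a vector space $V\cong\R^n$ with $n=\dim\s$, equipped with a fixed background inner product, and encode the Lie algebra structure as a bracket $\mu\in\Lambda^2V^*\otimes V$. The group $GL(n,\R)$ acts on brackets by $(g\cdot\mu)(X,Y)=g\,\mu(g^{-1}X,g^{-1}Y)$, and a standard observation of Lauret is that, writing a left-invariant metric as $\langle X,Y\rangle=\langle gX,gY\rangle_0$, the map $g$ is simultaneously a Lie algebra isomorphism and an isometry from $(\s,\mu,\langle\cdot,\cdot\rangle)$ onto $(\s,g\cdot\mu,\langle\cdot,\cdot\rangle_0)$ with the \emph{fixed} inner product but moved bracket. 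Consequently the full family of left-invariant metrics on $S$, up to isometry, is parametrized by the $O(n)$-orbits inside $GL(n,\R)\cdot\mu$, while the stabilizer of $\mu$ in $GL(n,\R)$ is exactly $\Aut(\s)$. Thus the proposition reduces to showing that the brackets in $GL(n,\R)\cdot\mu$ representing solvsolitons form a single $O(n)$-orbit up to the scaling $\mu\mapsto t\mu$: indeed, an $O(n)$-relation $\mu_2=k\cdot\mu_1$ between two such brackets translates back, after composing with the isometries $g_1,g_2$ realizing the two metrics, into a linear map $g_2^{-1}kg_1$ that is simultaneously an automorphism of $(\s,\mu)$ and an isometry carrying one metric to the other.

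Second I would identify solvsolitons with critical points of the moment map. For the $GL(n,\R)$-action on $\Lambda^2V^*\otimes V$ there is a moment map $m(\mu)\in\operatorname{Sym}(V)$, characterized by $\langle m(\mu),A\rangle$ being proportional to $\tfrac{1}{\|\mu\|^2}\langle A\cdot\mu,\mu\rangle$ for symmetric $A$, where $A\cdot\mu$ denotes the infinitesimal action. The key formula, due to Lauret, is that $m(\mu)$ equals $\tfrac{4}{\|\mu\|^2}\Ric_\mu$ up to an additive multiple of the identity arising from the trace. Under this dictionary the algebraic Ricci soliton condition $\Ric_\mu=c\,\Id+D$ with $D\in\Der(\mu)$ symmetric is \emph{equivalent} to $\mu$ being a critical point of $\|m\|^2$ along its $GL(n,\R)$-orbit: the orbit-gradient of $\|m\|^2$ at $\mu$ is $m(\mu)\cdot\mu$, which vanishes precisely when the symmetric operator $m(\mu)$, modulo the identity, annihilates $\mu$, i.e.\ is a derivation of $\mu$. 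The hypothesis that $S$ \emph{admits} a solvsoliton guarantees that such a critical point exists in the orbit.

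Third I would invoke the uniqueness of minimal (critical) vectors. By the real form of the Kempf--Ness theorem (Richardson--Slodowy, Ness; see also the Riemannian formulations of Heber and B\"ohm--Lauret), any two critical points of $\|m\|^2$ lying in a common $GL(n,\R)$-orbit and having the same norm differ by an element of the maximal compact subgroup $O(n)$. Normalizing the norm by the scaling action $\mu\mapsto t\mu$ and translating through the correspondence of the first step, this exhibits the two solvsolitons as related by an automorphism of $\s$ together with a homothety, which is exactly the asserted uniqueness.

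The main obstacle is the second step: establishing the precise correspondence between the algebraic soliton equation and the moment-map critical-point condition. This requires the explicit Ricci formula in the varying-bracket model and a careful verification that the non-derivation part of $\Ric_\mu-c\,\Id$ is precisely the obstruction to $\mu$ being a critical point, along with the bookkeeping needed to separate the scalar (trace and scaling) direction from the genuine geometry. The GIT input of the third step---uniqueness of minimal vectors for the \emph{real} reductive group $GL(n,\R)$ rather than its complexification---is the other essential and nontrivial ingredient; one must also know that a minimal vector actually occurs in the orbit, which is exactly what the hypothesis of admitting a solvsoliton supplies.
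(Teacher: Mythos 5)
This proposition is not proved in the paper at all; it is quoted from Lauret's \emph{Ricci soliton solvmanifolds}, so your proposal should be measured against Lauret's argument. Your Steps 1 and 3 are sound: the varying-brackets formalism and the real Kempf--Ness/Ness-type statement that critical points of $\|m\|^2$ in a single $GL(n,\R)$-orbit form a single $O(n)$-orbit up to scaling are both standard and correctly deployed. The genuine gap is exactly where you suspected it, in Step 2, and it is not merely a bookkeeping issue: the moment map for the $GL(n,\R)$-action on brackets computes the operator usually denoted $M_\mu$ (the ``nilpotent part'' of the Ricci curvature), and the identity $\Ric_\mu = \tfrac{\|\mu\|^2}{4}\,m(\mu) + (\text{multiple of }\Id)$ holds only for \emph{nilpotent} $\mu$. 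For a general metric solvable Lie algebra one has
$$\Ric_\mu \;=\; M_\mu \;-\; \tfrac12 B_\mu \;-\; S(\ad_\mu H_\mu),$$
where $B_\mu$ is the Killing form operator and $H_\mu$ is the mean curvature vector defined by $\langle H_\mu, X\rangle = \mytrace(\ad_\mu X)$. For a non-nilpotent solvable algebra, $B_\mu$ is typically nonzero and $H_\mu\neq 0$ unless the group is unimodular, and neither correction term is a multiple of the identity. Consequently the algebraic soliton equation $\Ric_\mu = c\,\Id + D$ is \emph{not} equivalent to $\mu$ being a critical point of $\|m\|^2$ along its orbit, and the reduction to the GIT uniqueness statement breaks down at the outset.

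Lauret's actual proof routes around this by first establishing the structure theorem for solvsolitons (essentially Proposition~\ref{proplauret} of this paper): $\s=\af\ltimes\n$ with $\af$ abelian acting by normal derivations whose symmetric parts are derivations, and with the induced metric on the nilradical $\n$ a \emph{nilsoliton}. The moment-map/stratification uniqueness argument you describe is then applied only on $\n$, where $\Ric$ and $m$ genuinely agree, yielding uniqueness of the nilsoliton up to $O(n)$ and scaling; a separate (more elementary) argument shows that the abelian factor, its inner product, and its action on $\n$ are then determined up to conjugation by the soliton equation. If you want to salvage your approach, you must either restrict the variational characterization to the nilradical as Lauret does, or replace $\|m\|^2$ by a functional adapted to the corrected Ricci formula --- but the latter is substantially harder and is not how the cited result is proved.
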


We will need the following result that does not appear explicitly in the literature.

\begin{lemma}\label{prop: compl solv soliton invariant under compact aut} If $(S,g)$ is a completely solvable solvsoliton, then $\Aut_\orth(S,g)$ is a maximal compact subgroup of $\Aut(S)$.
\end{lemma}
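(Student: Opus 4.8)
The plan is to realize $\Aut_\orth(S,g)$ as the intersection with the orthogonal group of a self-adjoint (Cartan-involution-stable) subgroup of $\GL(\s)$, and then to invoke the standard structure theory of self-adjoint algebraic groups. Fix the inner product $\Ip$ determined by $g$ on $V:=\s$, let $\mu\in\Lambda^2 V^*\otimes V$ be the Lie bracket, and let $\GL(V)$ act by $(\phi\cdot\mu)(X,Y)=\phi\,\mu(\phi^{-1}X,\phi^{-1}Y)$. Then $\Aut(S)=\Aut(\s)=\GL(V)_\mu$ (the stabilizer of $\mu$) and $\Aut_\orth(S,g)=\GL(V)_\mu\cap \mathrm{O}(V,\Ip)$; the latter is closed in the compact group $\mathrm{O}(V,\Ip)$, hence compact. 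Writing $\theta(\phi)=(\phi^*)^{-1}$ for the Cartan involution of $\GL(V)$ attached to $\Ip$, so that $\mathrm{O}(V,\Ip)=\GL(V)^\theta$, the entire lemma reduces to showing that $\GL(V)_\mu$ is $\theta$-stable: indeed, by the theory of self-adjoint (reductive) linear groups (Mostow \cite{Mostow:FullyReducibleSubgrpsOfAlgGrps}), a closed $\theta$-stable subgroup $H<\GL(V)$ admits a Cartan decomposition $H=(H\cap\mathrm{O}(V))\exp(\h\cap\operatorname{Sym})$, so that $H\cap\mathrm{O}(V)$ is a maximal compact subgroup; applying this to $H=\GL(V)_\mu$ yields the claim.

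The first step, which I expect to be the main obstacle, is to show that $[\mu]$ is a critical point of the moment-map norm functional on $\mathbb{P}(\Lambda^2V^*\otimes V)$ for the $\GL(V)$-action. This is exactly the content of the GIT characterization of solvsolitons: since $(S,g)$ is a completely solvable solvsoliton it is an algebraic Ricci soliton, $\Ric_g=cI+D$ with $D$ a symmetric derivation, and by the results of Lauret \cite{Lauret:SolSolitons} and Jablonski \cite{Jablo:HomogeneousRicciSolitonsAreAlgebraic} this soliton condition is equivalent to $\mu$ being a minimal (critical) vector in the appropriate projective sense. The care required here is in handling the scaling/trace direction — the full $\GL(V)$ has no minimal vectors because of homotheties, so one works projectively (equivalently modulo $\SL(V)$), and the constant $c$ in the soliton equation is precisely the bookkeeping of the trace direction of the moment map. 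The completely solvable hypothesis is what guarantees that $g$ is genuinely an algebraic soliton (Proposition~\ref{solvsol uniq}) and that the structure is real, so these characterizations apply verbatim.

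Granting the critical-point property, the real Kempf--Ness/Richardson--Slodowy theory gives that the projective stabilizer $\GL(V)_{[\mu]}=\{\phi:\phi\cdot\mu\in\R\mu\}$ is $\theta$-stable. It remains to pass from $\GL(V)_{[\mu]}$ to $\Aut(\s)=\GL(V)_\mu$. For this I would introduce the character $\chi:\GL(V)_{[\mu]}\to\R^\times$ defined by $\phi\cdot\mu=\chi(\phi)\mu$, so that $\Aut(\s)=\ker(\chi-1)=\chi^{-1}(1)$. A short computation using the polar decomposition $\phi=k\exp(s)$ with $k\in\mathrm{O}(V)$ and $s$ symmetric shows $\chi(\theta\phi)=\chi(\phi)^{-1}$ (on the orthogonal part $\chi=\pm1$ is involutive, and $\theta\exp(s)=\exp(-s)$ inverts the $\exp(s)$-contribution). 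Consequently $\chi^{-1}(1)$ is $\theta$-stable, i.e. $\Aut(\s)$ is self-adjoint.

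Combining the three ingredients — $\Aut(\s)$ self-adjoint, the Mostow Cartan decomposition for self-adjoint groups, and the identification $\Aut_\orth(S,g)=\Aut(\s)\cap\mathrm{O}(V,\Ip)$ — yields that $\Aut_\orth(S,g)$ is a maximal compact subgroup of $\Aut(S)$. I expect the only delicate point to be the precise formulation and citation of the minimal-vector characterization in Step~A; the passage from the projective stabilizer to $\Aut(\s)$ (Step~C) and the final structure-theoretic conclusion (Step~D) are routine once self-adjointness is in hand.
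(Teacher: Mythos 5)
Your overall strategy---realize $\Aut_\orth(S,g)$ as $\Aut(\s)\cap \mathrm{O}(\s,g)$ and deduce maximal compactness from self-adjointness of the stabilizer of the bracket---is in the spirit of the reference the paper actually cites for this lemma (the proof of Theorem~4.1 of \cite{Jablo:ConceringExistenceOfEinstein}, which is GIT-based; the paper gives no independent proof). However, there is a genuine gap at the central step: the group $\Aut(\s)$ is \emph{not} $\theta$-stable with respect to the solvsoliton inner product, so Steps~C and~D collapse. The failure is already visible for the Heisenberg algebra $\n=\mathrm{span}(X,Y,Z)$, $[X,Y]=Z$, with its nilsoliton metric making $X,Y,Z$ orthonormal: the map $X\mapsto X+Z$, $Y\mapsto Y$, $Z\mapsto Z$ is an automorphism, but its adjoint sends $Z\mapsto X+Z$ and so fails to preserve the center, hence is not an automorphism. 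The same happens for genuinely solvable solvsolitons, e.g.\ $\s=\R A\ltimes\R^{n}$ with $\ad(A)=\Id$ (real hyperbolic space): the inner automorphism $\exp(\ad X)$, $X\in\n$, sends $A\mapsto A-X$, while its adjoint moves $\n$ out of itself. In general $\Aut(\s)$ is an algebraic group with nontrivial unipotent radical, and by Mostow a self-adjoint algebraic group is reductive, so $\Aut(\s)$ can never be self-adjoint in these cases even though the lemma is true.

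The source of the error is the conflation of ``minimal vector'' with ``critical point of $\|m\|^2$'' in Step~A. The Richardson--Slodowy/Kempf--Ness statement that stabilizers are $\theta$-stable applies to \emph{minimal vectors}, i.e.\ zeros of the moment map (equivalently, closed orbits). Soliton brackets are only \emph{critical points} of the norm of the moment map; their orbits are not closed (the Heisenberg bracket degenerates to the abelian one in its orbit closure), and the stabilizer of such a critical point is in general only contained in a parabolic subgroup determined by $m(\mu)$, not $\theta$-stable. A second, independent problem: for non-nilpotent $\s$ the Ricci operator differs from the moment map by Killing-form and mean-curvature terms, so the solvsoliton equation $\Ric=c\,\Id+D$ is not literally the critical-point equation for the full $\mathrm{GL}(\s)$-action on the bracket; the GIT characterizations in the literature are formulated for the nilradical or for the action of a suitable reductive subgroup. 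The repair---and what the cited proof actually does---is to write the algebraic group $\Aut(\s)=H\ltimes U$ with $U$ unipotent and $H$ a maximal reductive subgroup, use the solvsoliton structure (nilsoliton nilradical together with normal $\ad_\s(\af)$) to show that $H$ may be chosen self-adjoint with respect to $g$, and then check separately that $\Aut(\s)\cap \mathrm{O}(\s,g)$ lies in such an $H$, maximal compact subgroups of $\Aut(\s)$ being exactly the maximal compacts of conjugates of $H$. Your argument as written never addresses the unipotent radical.
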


The lemma follows from the proof of \cite[Theorem 4.1]{Jablo:ConceringExistenceOfEinstein}.  While that theorem assumes the Lie group $S$ is unimodular, the relevant part of the proof is valid for any completely solvable Lie group.

The following theorem interprets results of the second author in the language of Definition~\ref{def.scal} and Theorem~\ref{thm: equiv classes}.  

\begin{thm}\label{thm: solitons and equiv classes}  Let $\R\in \Rcal_n$.   If $R$ admits an expanding left-invariant Ricci soliton metric $g$, then every solvable Lie group in the equivalence class of $R$ in $\Rcal_n$ admits a left-invariant Ricci soliton isometric to $g$.   In particular, $(R,g)$ is isometric to a solvsoliton on the unique (up to isomorphism) completely solvable Lie group $S$ in the equivalence class of $R$.   

\end{thm}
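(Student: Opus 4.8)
The plan is to combine the isometry-invariance of the Ricci soliton condition with the characterization in Theorem~\ref{thm: equiv classes} of the equivalence relation as ``isometric for suitable left-invariant metrics,'' using the completely solvable solvsoliton in the class as a common geometric model. First I would pass to a completely solvable model. As $(R,g)$ is an expanding left-invariant Ricci soliton on a solvable Lie group, it is a Ricci soliton solvmanifold, so Proposition~\ref{solvsol uniq} (every Ricci soliton solvmanifold is isometric to a completely solvable solvsoliton) yields a completely solvable group $S_0$ and a solvsoliton $h\in\Lc(S_0)$ with $(S_0,h)$ isometric to $(R,g)$. The metrics $g$ and $h$ realize $R$ and $S_0$ as isometric solvmanifolds, so the implication \ref{second}$\implies$\ref{first} of Theorem~\ref{thm: equiv classes} gives $R\equiv S_0$; since $\sigma$ fixes completely solvable groups we get $S_0=\sigma(S_0)=\sigma(R)=S$, up to isomorphism. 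This already proves the final clause: $(R,g)$ is isometric to the solvsoliton $(S,h)$.

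Next set $\M=(S,h)$ and let $R'$ be an arbitrary member of the equivalence class, so $\sigma(R')=S$. By Theorem~\ref{thm: equiv classes}(i), $R'$ is an abstract modification of $S$, and by the remark after Definition~\ref{def.abstmod} I may take the relevant compact group to be a maximal compact connected subgroup $U_1<\Aut(S)$, so that $R'$ is isomorphic to a modification of $S$ in $S\rtimes U_1$. The essential geometric input is Lemma~\ref{prop: compl solv soliton invariant under compact aut}: for the completely solvable solvsoliton $(S,h)$ the group $\Aut_\orth(S,h)$ is a \emph{maximal} compact subgroup of $\Aut(S)$, and it normalizes $S$ inside $\Isom(\M)$ (see~\ref{isomgrps}), whence $S\rtimes\Aut_\orth(S,h)_0<\Isom(\M)$. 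Since all maximal compact connected subgroups of $\Aut(S)$ are conjugate, I may choose $\psi\in\Aut(S)$ with $\psi U_1\psi^{-1}=U:=\Aut_\orth(S,h)_0$; transporting the modification by $\psi$ (which preserves the property of being a modification of $S$) shows that $R'$ is isomorphic to a modification $\tilde R'$ of $S$ inside $H:=S\rtimes U<\Isom(\M)$.

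It remains to harvest the metric. The stabilizer of $e\in S$ in $H$ is exactly $U$, so $H/U=\M$ with its given metric $h$, and by Proposition~\ref{prop: modification picking up metric} the modification $\tilde R'$ acts simply transitively on $H/U=\M$. Hence $\tilde R'\simeq R'$ inherits a left-invariant metric $g''$ for which $(R',g'')$ is isometric to $\M=(S,h)$, and therefore to $(R,g)$. Because the equation $ric=c\,g+L_Xg$ is preserved under isometries (transporting both $X$ and $c$), $g''$ is again a Ricci soliton, which is the desired conclusion for $R'$.

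I expect the main obstacle to be precisely the step that forces the compact automorphism group carrying out the modification to lie inside the isometry group of the solvsoliton. A priori the abstract modification is constructed in $S\rtimes U_1$ for a compact $U_1$ unrelated to $h$, and a generic modification need not preserve $h$; there is nothing formal guaranteeing a metric connection. Lemma~\ref{prop: compl solv soliton invariant under compact aut}, asserting maximality of $\Aut_\orth(S,h)$ in $\Aut(S)$, together with conjugacy of maximal compact connected subgroups, is exactly what allows me to replace $U_1$ by the metric-preserving $U=\Aut_\orth(S,h)_0$ without altering the isomorphism type of the modification. Everything else---the reduction to the completely solvable model and the passage back and forth between modifications and simply transitive actions---is routine bookkeeping built on Theorem~\ref{thm: equiv classes} and Proposition~\ref{prop: modification picking up metric}.
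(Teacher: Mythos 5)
Your proposal is correct and follows essentially the same route as the paper: reduce to a completely solvable solvsoliton $(S,h)$ via Proposition~\ref{solvsol uniq}, identify $S$ with the completely solvable representative of the class via Theorem~\ref{thm: equiv classes}, and then rerun the (a)$\implies$(b) argument of that theorem with the solvsoliton metric in place of an arbitrary $U$-invariant metric, the key input being Lemma~\ref{prop: compl solv soliton invariant under compact aut}. You have correctly identified the maximality of $\Aut_\orth(S,h)$ in $\Aut(S)$ as the crucial step that lets the modification be carried out inside $\Isom(S,h)$.
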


\begin{proof} 
By Proposition~\ref{solvsol uniq}, $(R,g)$ is isometric to a completely solvable soliton $(S,g_0)$.  By Theorem~\ref{thm: equiv classes}, $S$ must lie in the equivalence class of $R$.   Lemma ~\ref{prop: compl solv soliton invariant under compact aut} and the argument in the proof of the implication (a) $\implies$ (b) in Theorem~\ref{thm: equiv classes} together show that every $R'$ in the equivalence class admits a left-invariant metric isometric to $g_0$ and thus to $g$.   
\end{proof}

\subsection{Structure of solvsolitons}
Solvsolitons have been extensively studied, see e.g.\ \cite{Will:TheSpaceOfSolsolitonsInLowDimensions,
LauretLafuente:StructureOfHomogeneousRicciSolitonsAndTheAlekseevskiiConjecture,
JP:TowardsTheAlekseevskiiConjecture}.   In this subsection, we review aspects of the structure theory of completely solvable solvsolitons that we will need in the proof of the main theorem.   

We begin with Einstein solvmanifolds.  In 1998, J. Heber made a deep study of what were then called ``standard'' Einstein solvmanifolds $(S,g)$ ; the word ``standard'' means that $(S,g)$ satisfies item (i) in the proposition below.  Later, J. Lauret showed that every Einstein solvmanifold is standard.    The following proposition combines part of Heber's structure theory of standard Einstein solvmanifolds with Lauret's result.   

\begin{prop}\label{prop.heb} 
Let $(S,g)$ be a completely solvable Einstein solvmanifold of negative Ricci curvature and let $\n$ be the nilradical of the Lie algebra $\s$.   Then:
\begin{enumerate}
\item \cite{LauretStandard,LauretNilsoliton}  The orthogonal complement $\af$ of $\n$ relative to the Riemannian inner product is abelian and $\s=\af\ltimes \n$
\item \cite{Heber} $\ad_\s(\af)$ consists of semisimple derivations.

\item \cite{Heber} Let $H$ be the unique element of $\af$ such that $g(H, A)=\mytrace(\ad(A))$ for all $A\in\af$ .  Then there exists $\lambda\in\R^+$ such that the real parts of the eigenvalues of $\lambda\, \ad(H)^\R|_\n$ are positive integers.   (The derivation $\ad(H)|_{\n}$ is sometimes called the \emph{Einstein derivation}. 
\end{enumerate}
\end{prop}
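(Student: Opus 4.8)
The three assertions are the cornerstone structure theorems for Einstein solvmanifolds, due to Heber \cite{Heber} in the \emph{standard} case and to Lauret \cite{LauretStandard} in general, and my plan is to recover them by analysing the Ricci operator of the solvmanifold through the distinguished \emph{mean curvature vector} $H\in\s$, defined by $g(H,X)=\mytrace(\ad X)$ for all $X\in\s$. Since $\ad X$ is nilpotent, hence traceless, for $X\in\n$, one immediately gets $H\perp\n$, so $H\in\af$. I would organise the argument in the order (1), (2), (3), as each step feeds the next: standardness makes $\af\ltimes\n$ a genuine semidirect product and puts the Einstein equation into block form; that block form forces normality, hence semisimplicity, of $\ad(\af)$; and semisimplicity identifies $\ad(H)|_\n$ with the nilsoliton derivation, whose eigenvalues carry the integrality in (3).

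For (1), the decomposition $\s=\af\ltimes\n$ is automatic once $\af$ is a subalgebra, and this is immediate once $\af$ is abelian, which is the only real content. Here I would invoke Lauret's \emph{standardness} theorem \cite{LauretStandard}: writing the Einstein condition $\Ric=c\,\Id$ with $c<0$ in the Ricci formula for solvmanifolds and examining the component along $\af$, one obtains a trace identity in which a nonnegative quantity built from the brackets $[A_i,A_j]$ appears with a definite sign against the negative Einstein constant, forcing $[\af,\af]=0$. I expect this to be the main obstacle: the naive block computation only controls the mixed $\af$--$\n$ terms, and closing the argument genuinely requires Lauret's moment-map/GIT analysis of the variety of solvable Lie brackets, so in practice I would quote \cite{LauretStandard} rather than reprove it.

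Granting standardness, statement (2) becomes a sum-of-squares argument of Heber. With $\af$ abelian the operator $\ad_\s(A)$ annihilates $\af$ and preserves $\n$, so it suffices to treat $\phi_A:=\ad(A)|_\n$. Decomposing $\phi_A=S_A+K_A$ into its $g$-symmetric and $g$-skew parts and substituting into the Ricci formula, the $\af$-block of the Einstein equation, evaluated against the normalization given by $H$, yields a nonnegative expression measuring the failure of the $\phi_A$ to be normal (a sum of terms $\|[\phi_{A_i},\phi_{A_i}^t]\|^2$) that must vanish. Hence each $\phi_A$ is normal, therefore diagonalizable over $\C$, i.e.\ semisimple. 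Complete solvability forces the eigenvalues to be real, so the $\phi_A$ are in fact $\R$-diagonalizable and $\ad_\s(\af)$ consists of semisimple derivations.

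Finally, for (3) I would pass to the nilradical. The restriction of $g$ to $\n$ makes $(\n,g|_\n)$ a nilsoliton, with $\Ric_\n=c'\,\Id+D$ for a symmetric derivation $D$, and Heber's computation identifies $D$ with a positive multiple of $\ad(H)|_\n$, which by (2) is semisimple with real (indeed positive) eigenvalues. The integrality then reduces to the rationality of the nilsoliton eigenvalues, which I would obtain from Nikolayevsky's \emph{pre-Einstein derivation} \cite{Nikolayevsky:EinsteinSolvmanifoldsandPreEinsteinDerivation}: $D$ coincides up to scaling with the pre-Einstein derivation of $\n$, whose eigenvalues are rational by its algebraic characterization through trace conditions against $\Der(\n)$. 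Choosing $\lambda\in\R^+$ to clear denominators makes the real parts of the eigenvalues of $\lambda\,\ad(H)^\R|_\n$ positive integers. The only subtlety to verify is that $\ad(H)^\R$, the real-semisimple part, rather than $\ad(H)$ itself carries the eigenvalue data, which is immediate here since complete solvability gives $\ad(H)=\ad(H)^\R$.
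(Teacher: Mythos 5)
The paper offers no proof of this proposition at all: it is presented as a compendium of results quoted from the literature, with (1) attributed to Lauret \cite{LauretStandard,LauretNilsoliton} and (2), (3) to Heber \cite{Heber}. Your proposal is consistent with this --- you correctly identify standardness as the one genuinely hard ingredient in (1) and defer to \cite{LauretStandard} for it, and your sketch of the normality/sum-of-squares argument for (2), together with the observation that normality plus real eigenvalues (from complete solvability) yields semisimplicity, is an accurate account of Heber's reasoning. The one place you genuinely diverge is (3): the paper attributes the integrality of the eigenvalues to Heber, who obtained it from his variational characterization of the eigenvalue type, whereas you route it through Nikolayevsky's pre-Einstein derivation and its rational eigenvalues. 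That route is mathematically sound (and arguably cleaner), but note two small points: within this paper Proposition~\ref{prop.nik}(iii) is itself stated in terms of Proposition~\ref{prop.heb}(iii), so quoting it here is slightly awkward as a matter of logical ordering, and the positivity of the eigenvalues of $\ad(H)|_\n$ --- which you assert in passing --- is itself part of Heber's structure theory rather than a consequence of (2) alone. Neither point is a gap, since all of this is established in the cited sources; your proposal and the paper ultimately rest on the same literature.
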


\begin{defin}\label{def.pre-Einst} (Nikolayevski \cite{Nikolayevsky:EinsteinSolvmanifoldsandPreEinsteinDerivation}.) 
 A derivation $\varphi \in Der(\mathfrak g)$ of a Lie algebra $\g$ is said to be a \textit{pre-Einstein derviation} if it is semisimple as an element of $End(\mathfrak g)$ with all eigenvalues real, and satisfies
\begin{equation}\label{eqn:pre-Einstein deriv} \mytrace(\varphi A) = \mytrace(A)  \quad \mbox{ for all } A\in \Der(\g)
\end{equation}

 \end{defin}

 \begin{prop} \cite[Theorem 1]{Nikolayevsky:EinsteinSolvmanifoldsandPreEinsteinDerivation}\label{prop.nik}
 $\text{}$
\begin{enumerate}
 \item Every Lie algebra admits a pre-Einstein derivation $\varphi$, unique up to automorphism.   The eigenvalues of $\varphi$ are rational.
  \item The pre-Einstein derivation lies in the center of a maximal reductive subalgebra of $\Der(\n)$ and is invariant under a maximal compact subgroup of $Aut(\mathfrak n)$.   
 \item If $(S,g)$ is a completely solvable Einstein solvanifold of negative Ricci curvature, then the associated Einstein derivation (see Proposition~\ref{prop.heb}(iii)) is a positive multiple of a pre-Einstein derivation of $\nilrad(\s)$.

\end{enumerate}
 
 \end{prop}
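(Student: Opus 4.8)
The plan is to prove all three assertions inside the derivation algebra $\Der(\g)$, which is the Lie algebra of the algebraic group $\Aut(\g)$, using the trace form $\langle A,B\rangle=\mytrace(AB)$ and Mostow's conjugacy theorems \cite{Mostow:FullyReducibleSubgrpsOfAlgGrps}. For existence in (i), I would fix a maximal reductive subalgebra $\kf$ of $\Der(\g)$ and write $\kf=[\kf,\kf]\oplus\zfrak$, where $\zfrak=Z(\kf)$ consists of semisimple elements; splitting $\zfrak=\zfrak_\R\oplus\zfrak_{i\R}$ into real- and imaginary-eigenvalue parts as in Notation~\ref{notajordan}, the trace form is positive definite on $\zfrak_\R$ since $\mytrace(\delta^2)=\sum_i\lambda_i^2>0$ for any nonzero real-semisimple $\delta$. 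The functional $A\mapsto\mytrace(A)$ vanishes on $[\Der(\g),\Der(\g)]$, on nilpotent elements, on $[\kf,\kf]$, and on $\zfrak_{i\R}$ (whose eigenvalues occur in conjugate pairs), so I would define $\varphi\in\zfrak_\R$ to be the unique element with $\mytrace(\varphi H)=\mytrace(H)$ for all $H\in\zfrak_\R$, using nondegeneracy of the form there.

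To see that $\varphi$ is actually a pre-Einstein derivation, I must promote this identity from $\zfrak_\R$ to all of $\Der(\g)$. Since $\ad_{\Der(\g)}(\varphi)$ is semisimple, I would decompose $\Der(\g)=C\oplus[\varphi,\Der(\g)]$ with $C$ the centralizer of $\varphi$; both $\mytrace(\varphi\,\cdot)$ and $\mytrace(\cdot)$ annihilate the image $[\varphi,\Der(\g)]$ by the cyclic property of the trace, reducing the claim to $A\in C$. Writing $A=A_s^\R+A_s^{i\R}+A_n$ for its commuting Jordan pieces and grading $\g=\bigoplus_c\g_c$ by the eigenvalues of $\varphi$, the nilpotent and imaginary parts contribute zero to both sides, the semisimple part coming from $[\kf,\kf]$ contributes zero, and the real-semisimple part reduces, after conjugation into $\zfrak_\R$, to the defining property of $\varphi$. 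This argument simultaneously yields (ii) (applied to $\g=\n$): by construction $\varphi$ lies in the center $Z(\kf)$ of a maximal reductive subalgebra, and a maximal compactly embedded subalgebra may be taken inside $\kf$, so $\varphi$ commutes with it and is fixed by the corresponding maximal compact subgroup of $\Aut(\n)$.

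For uniqueness up to automorphism, given two pre-Einstein derivations I would invoke conjugacy of maximal reductive subalgebras \cite{Mostow:FullyReducibleSubgrpsOfAlgGrps} together with conjugacy of maximal $\R$-split tori in $\Aut(\g)$ to place both in a common such torus $\tf$; their difference $\psi\in\tf$ then satisfies $\mytrace(\psi A)=0$ for all $A\in\tf$, whence $\mytrace(\psi^2)=0$ and $\psi=0$ by positive-definiteness. Rationality follows by listing the distinct eigenvalues $d_1,\dots,d_r$ of $\varphi$ with multiplicities $n_i=\dim\g_{d_i}$: restricted to grading-preserving diagonal derivations the pre-Einstein condition becomes the rational linear system $\sum_i n_i d_i a_i=\sum_i n_i a_i$ for all $(a_i)$ in the $\mathbf{Q}$-subspace cut out by the integer relations $a_i+a_j=a_k$ arising whenever $[\g_{d_i},\g_{d_j}]$ meets $\g_{d_k}$, and its unique solution $(d_i)$ is rational.

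For (iii) I would begin from Heber's structure theory (Proposition~\ref{prop.heb}): $\s=\af\ltimes\n$ with $\ad(\af)|_\n$ symmetric hence real-semisimple, and Einstein derivation $\phi=\ad(H)|_\n$, where $g(H,A)=\mytrace(\ad(A)|_\n)$. The key input is the Einstein relation on the $\af$-block, $\mytrace(\ad(A)\ad(B)|_\n)=-c\,g(A,B)$ for $A,B\in\af$; taking $B=H$ gives $\mytrace(\phi\,\ad(A)|_\n)=-c\,\mytrace(\ad(A)|_\n)$, which is precisely the pre-Einstein condition for $\tfrac{1}{-c}\phi$ along the torus $\ad(\af)|_\n$. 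Since $\ad(\af)|_\n$ is a maximal abelian algebra of symmetric derivations and $\phi$ is central in a maximal reductive subalgebra of $\Der(\n)$, the propagation argument of (i) extends this to all of $\Der(\n)$, and uniqueness then identifies $\tfrac{1}{-c}\phi$ with a pre-Einstein derivation of $\n$, so $\phi$ is its positive multiple $-c>0$. The main obstacle throughout is exactly this propagation step: showing that the trace identity, imposed only on the real torus $\zfrak_\R$ (resp.\ on $\ad(\af)|_\n$), forces the identity on every derivation. This is where the semisimplicity of $\ad(\varphi)$, the conjugacy of maximal reductive subalgebras and split tori, and the careful bookkeeping over the $\varphi$-eigenspace grading must be combined.
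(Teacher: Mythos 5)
The paper does not actually prove this proposition: it is quoted from Nikolayevsky's Theorem~1 (with item (ii) extracted from his proof), so your attempt has to be measured against that original argument. Your parts (i) and (ii) essentially reconstruct it along the same lines: positive definiteness of the trace form on the real split part $\zfrak_\R$ of the centre of a maximal reductive subalgebra $\kf$ of $\Der(\g)$, Riesz representation of the trace functional there, propagation via the splitting $\Der(\g)=C_{\Der(\g)}(\varphi)\oplus[\varphi,\Der(\g)]$ together with the Jordan decomposition, Mostow conjugacy for uniqueness, and the rational linear system for rationality. Two details to tighten. First, for invariance under a possibly disconnected maximal compact subgroup $U$ in (ii), commuting with $\mathrm{Lie}(U)$ only fixes $\varphi$ under $U_0$; instead note that $\varphi$ is canonically determined by $\kf$ (the unique element of $\zfrak_\R$ representing the trace functional), choose $\kf\supset\uf$, and observe that every $u\in U\subset K$ then preserves $\kf$, $\zfrak_\R$ and the trace data, hence fixes $\varphi$. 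Second, the conjugation of $A_s^\R$ into $\kf$ must be performed inside the centralizer of $\varphi$ so that $\varphi$ is not moved, and after landing in $\kf$ you still need $\mytrace(\varphi\,\cdot)$ to vanish on $[\kf,\kf]$, which holds because $\varphi$ is central in $\kf$.

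Part (iii) has a genuine gap. You use only the $\af\times\af$ block of the Einstein equation, which yields $\mytrace(\phi\,\ad(A)|_\n)=-c\,\mytrace(\ad(A)|_\n)$ for $A\in\af$, i.e.\ the pre-Einstein identity tested only against the torus $\ad(\af)|_\n$. The ``propagation'' you invoke does not apply here: the mechanism of (i) --- and likewise Lemma~\ref{gjlemma}(iii) --- requires the trace identity to hold on a subalgebra containing the whole of $\Der^{\abc}(\n)\simeq Z(\kf)$ for a maximal reductive $\kf$, not merely on some torus containing the candidate derivation. When $\dim\af$ is small compared with the abelian reductive part of $\Der(\n)$ (already for $\dim\af=1$), the $\af$-block constrains $\phi$ only within $\ad(\af)|_\n$ and cannot detect a failure of the identity against other semisimple derivations. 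Moreover, your assertion that $\phi$ is central in a maximal reductive subalgebra of $\Der(\n)$ is statement (ii) for the pre-Einstein derivation, i.e.\ part of what you are trying to identify $\phi$ with. The missing input is the $\n\times\n$ block: for a standard Einstein solvmanifold the induced metric on the nilradical is a nilsoliton with $\Ric_\n=cI+\phi$, and Lauret's identity $\mytrace(\Ric_\n D)=0$ for every $D\in\Der(\n)$ then gives $\mytrace(\phi D)=-c\,\mytrace(D)$ for \emph{all} $D\in\Der(\n)$ in one stroke, with no propagation needed; combined with the uniqueness in (i), this yields (iii).
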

 The second item does not appear in the statement of \cite[Theorem 1]{Nikolayevsky:EinsteinSolvmanifoldsandPreEinsteinDerivation} but rather in its proof.
 
\begin{remark}\label{rem.nik} By Proposition~\ref{prop.nik} and the conjugacy of maximal reductive   subalgebras of $\Der(\n)$, one can always choose a pre-Einstein derivation that commutes with any given reductive subalgebra of $\Der(\n)$.

\end{remark}

We now turn to completely solvable Ricci solitons:

\begin{prop}\cite{Lauret:SolSolitons}\label{proplauret}
Let $(S,g)$ be a completely solvable expanding solvsoliton, let $N$ be the nilradical of $S$, and let $\s$ and $\n$ be the Lie algebras of $S$ and $N$.  Then
\begin{enumerate}
\item\label{cond1}   $\s$ can be written as a semi-direct sum $\mathfrak s = \mathfrak a \ltimes \mathfrak n$ with $\af$ an abelian subalgebra; 
\item  $\mathfrak n$ admits a Ricci soliton metric (called a nilsoliton in the literature);
\item $\af$ is $\ad_\s$-reductive.
\item If $(S,g)$ is not itself Einstein, let $\delta$ be a pre-Einstein derivation of $\n$ that commutes with $\ad_\s(\af)_{|\n}$.   Let $\s'$ be the one-dimensional extension of $\s$ given by $\s'=\s\ltimes \R\delta$, and let $S'$ be the simply-connected solvable extension of $S$ with Lie algebra $\s'$.   Then the Ricci soliton $g$ extends to an Einstein metric on $S'$ of negative Ricci curvature.

\end{enumerate}

\end{prop}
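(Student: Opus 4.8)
The plan is to argue entirely at the level of the metric Lie algebra, exploiting the algebraic soliton equation $\Ric = c\,\Id + D$ (with $c<0$ and $D$ a symmetric derivation of $\s$) together with the standard formula for the Ricci operator of a metric solvable Lie algebra. Set $\af := \n^{\perp}$, the orthogonal complement of the nilradical with respect to the solvsoliton inner product. Because the derived algebra $[\s,\s]$ is a nilpotent ideal it lies in $\n$, so $\s/\n$ is abelian and $\af$ maps isomorphically onto it; hence once $\af$ is shown to be a subalgebra, $[\af,\af]\subseteq\af\cap\n=0$ forces it to be abelian and yields (i). The heart of the matter, and what I expect to be the main obstacle, is to prove that each $\ad(A)$ with $A\in\af$ is a normal operator. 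To do this I would write the Ricci operator in block form relative to $\s=\af\oplus\n$, substitute into the soliton equation, and read off the $\af$-block and the off-diagonal block; this is precisely the step where the soliton hypothesis (as opposed to an arbitrary left-invariant metric) is used, and it parallels Heber's standardness analysis in the Einstein case. Normality of the $\ad(A)$ together with $[\af,\af]\subseteq\n$ then shows that $\af$ is a subalgebra, completing (i). For (iii), complete solvability forces each $\ad(A)$ to have real spectrum, so a normal $\ad(A)$ is symmetric, hence diagonalizable; the commuting family $\{\,\ad(A)|_{\n} : A\in\af\,\}$ is therefore simultaneously diagonalizable and $\ad_\s(\af)$ is reductive.

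For (ii) I would isolate the $\n$-block of the soliton equation. By the solvmanifold Ricci formula, the intrinsic Ricci operator $\Ric_{\n}$ of $(\n,\la\,,\,\ra|_{\n})$ differs from the restriction of $\Ric_\s$ to $\n$ only by symmetric operators manufactured from $\ad(\af)$ and the mean-curvature vector $H\in\af$; since, by the previous step, these arise from symmetric derivations, their deviation from a multiple of $\Id$ is a derivation of $\n$. Combining this with the restriction of $\Ric_\s=c\,\Id+D$ to $\n$ (and using that $D|_{\n}\in\Der(\n)$ because $\n$ is an ideal) gives $\Ric_{\n}=c'\,\Id+D'$ for some $c'\in\R$ and $D'\in\Der(\n)$, which is exactly the nilsoliton condition.

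Finally, for (iv), assuming $(S,g)$ is not Einstein so that $D'\neq 0$, I would carry out the rank-one extension. First extend the pre-Einstein derivation $\delta$ to a derivation of $\s$ by setting $\delta|_{\af}=0$; this is a derivation because $\delta$ commutes with $\ad(\af)|_{\n}$ and annihilates $[\af,\af]=0$. Form $\s'=\s\ltimes\R\delta$ and extend the inner product by declaring $\delta\perp\s$ with a norm left to be determined. Recomputing the Ricci operator of $\s'$ by the solvmanifold formula, the new direction contributes a term proportional to $\delta$ on $\n$ (with coefficient governed by $|\delta|^{2}$) together with a correction in the $\af\oplus\R\delta$ block. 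By Proposition~\ref{prop.nik} the pre-Einstein derivation $\delta$ is a positive multiple of the nilsoliton derivation $D'$ found in (ii), so fixing the scale of $|\delta|$ makes the residual derivation on $\s'$ cancel, leaving $\Ric_{\s'}=c''\,\Id$; the sign of $c''$ is negative, so $(S',g')$ is Einstein of negative Ricci curvature. The one genuinely delicate point is matching the scalars so that the leftover derivation vanishes simultaneously on $\n$ and on $\af\oplus\R\delta$; this is secured by the trace-normalization built into the definition of the pre-Einstein derivation together with the eigenvalue conditions of Proposition~\ref{prop.heb}.
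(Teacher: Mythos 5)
The paper itself does not prove this proposition; it is quoted from Lauret's \emph{Ricci soliton solvmanifolds} and the citation is the entire ``proof,'' so there is nothing in the text to compare against line by line. Your outline does track the actual architecture of Lauret's argument: take $\af=\n^{\perp}$, establish that $\af$ is abelian and that each $\ad(A)$, $A\in\af$, is normal, deduce the nilsoliton condition on $\n$ from the block form of the Ricci operator, and build the Einstein extension by adjoining the pre-Einstein derivation with a suitably scaled metric. Items (ii)--(iv) of your sketch, including the observation that normal plus real spectrum forces $\ad(A)$ to be symmetric (hence (iii)), and the verification that $\delta$ extended by zero on $\af$ is a derivation of $\s$, are essentially right.

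The genuine gap is at what you yourself call the heart of the matter: abelianness of $\af=\n^{\perp}$ and normality of the operators $\ad(A)$ do \emph{not} follow from ``writing the Ricci operator in block form, substituting into the soliton equation, and reading off the blocks.'' The soliton equation $\Ric=c\,\Id+D$ only tells you that $D=\Ric-c\,\Id$ is a symmetric derivation; since symmetric derivations preserve $\n$ and hence $\n^{\perp}$, you learn that $\af$ is $D$-invariant, but nothing about $[\af,\af]$ or about the skew parts of $\ad(A)$. This step is precisely the solvsoliton analogue of the standardness theorem: Heber \emph{assumed} standardness rather than proving it, and the proof (Lauret, \emph{Einstein solvmanifolds are standard}, and its adaptation in the solvsoliton paper) goes through geometric invariant theory --- the stratification of the variety of nilpotent brackets and delicate trace inequalities of the form $\mytrace(\Ric\,E)$ against distinguished derivations $E$ --- not through a direct block computation. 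Relatedly, your inference ``normality of the $\ad(A)$ together with $[\af,\af]\subseteq\n$ shows that $\af$ is a subalgebra'' is unsupported: normality of individual operators gives no control over $[\af,\af]$, and in Lauret's theorem abelianness of $\af$ and normality of $\ad(\af)$ are separate conclusions extracted from the same GIT estimate. Until that machinery (or a substitute for it) is supplied, item (i) and the normality input to (ii)--(iv) remain unproved.
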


\section{Proof of Theorem~\ref{main} }

By Theorems~\ref{thm.R admits max sym} and \ref{thm: solitons and equiv classes}, along with Corollary~\ref{cor.uniqueness}, it suffices to prove the following special case:

\begin{thm}\label{special case} Solvsolitons on simply-connected completely solvable Lie groups have maximal infinitesimal symmetry.
\end{thm}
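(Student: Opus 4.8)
The plan is to deduce the statement from Theorem~\ref{oldmain} by passing to the rank-one Einstein extension of the solvsoliton. Fix a solvsoliton $g_0$ on a simply-connected completely solvable Lie group $S$. If $(S,g_0)$ is Einstein then Theorem~\ref{oldmain} already shows it is maximally symmetric, hence infinitesimally maximally symmetric; so assume $g_0$ is non-Einstein (and therefore expanding). By Proposition~\ref{proplauret} there is a pre-Einstein derivation $\delta$ of $\n=\nilrad(\s)$ commuting with $\ad_\s(\af)|_\n$ such that $\s_E:=\s\ltimes\R\delta$ integrates to a simply-connected solvable group $S_E$ with an Einstein metric $g_E$ of negative Ricci curvature restricting to $g_0$ on the codimension-one ideal $S$. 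I then fix an arbitrary $g\in\Lc(S)$ and set $G=\Isom_0(S,g)$, $\g=\cisom(S,g)$, so that $\g=\lf\oplus\s$ as a vector space, where $\lf$ is the isotropy subalgebra; the goal is an abstract embedding $\g\hookrightarrow\cisom(S,g_0)$.

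The key step is to extend $\delta$ to a derivation $\hat\delta$ of $\g$. Since $\delta$ commutes with $\ad_\s(\af)|_\n$, setting $\hat\delta|_\af=0$ and $\hat\delta|_\n=\delta$ already defines a derivation of $\s$, namely the one whose adjoint produces $\s_E$. To push this across all of $\g$ I would exploit the structure from Proposition~\ref{lem.s1}: $S$ is in standard position, $\nilrad(\g)=\n_2$ with $\n=\n_1\oplus\n_2$, and $\g$ carries the reductive algebra $\ad_\g(\g_1+\lf)$ alongside $\ad_\s(\af)$. Because $\delta$ lies in the center of a maximal reductive subalgebra of $\Der(\n)$ and is invariant under a maximal compact subgroup of $\Aut(\n)$ (Proposition~\ref{prop.nik} and Remark~\ref{rem.nik}), Mostow's conjugacy of maximal reductive subalgebras lets me arrange that $\delta$ preserves $\n_2$ and commutes with the reductive subalgebra of $\Der(\nilrad(\g))$ generated by $\ad_\g(\g_1+\lf)$ and $\ad_\s(\af)$. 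I would then define $\hat\delta\in\Der^L(\g)$ (in the sense of Lemma~\ref{D}) to vanish on the reductive and isotropy directions $\g_1+\lf$ and to agree with $\delta$ along the nilpotent directions, using Lemma~\ref{D}(v) to keep $\hat\delta$ inside the algebraic group $\Aut^L(\g)$. \textbf{I expect this extension to be the main obstacle.} The derivation $\delta$ is intrinsic to $\n$, but inside $\g$ the subspace $\n_1$ is not an ideal and both $\g_1$ and $\lf$ act nontrivially, so verifying the derivation identity on the cross-terms — those mixing $\g_1+\lf$ with the nilradical, and those along $\n_1$ — is precisely where the arranged commutativity and the semisimplicity of $\delta$ must be used.

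With $\hat\delta$ in hand I form the one-dimensional extension $\g'=\g\ltimes\R\hat\delta$ and take $G'$ to be the associated (linear) Lie group, so that the subalgebra $\s\oplus\R\hat\delta<\g'$ is isomorphic to $\s_E$. Keeping $L'=L$ as isotropy, the subgroup $S_E<G'$ acts simply transitively on the simply-connected homogeneous space $G'/L'$, identifying it with a copy of $S_E$, and any $G'$-invariant metric $h$ makes $G'<\Isom(S_E,h)$. Because $g_E$ is Einstein, Theorem~\ref{oldmain} shows it is maximally symmetric, so $\Isom(S_E,h)<\Phi\,\Isom(S_E,g_E)\Phi^{-1}$ for some $\Phi\in\Aut(S_E)$; after conjugating by $\Phi$ I may therefore assume $G'<\Isom(S_E,g_E)$.

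Finally I descend to $S$. The orbit of the base point under $G<G'$ coincides with the orbit under the codimension-one subgroup $S$ (since $G=LS$ and $L$ fixes the base point), and as a $G$-space it is the effective homogeneous space $G/L$; its induced metric, being the restriction of $g_E$ to this copy of $S$, is the solvsoliton $g_0$ (Lauret's extension). Consequently restriction of Killing fields of $(S_E,g_E)$ to the orbit is an injective Lie algebra homomorphism carrying $\g$ into $\cisom(S,g_0)$, which is the required monomorphism; as $g\in\Lc(S)$ was arbitrary, $g_0$ is infinitesimally maximally symmetric. The one point needing care is to arrange $\Phi$ so as to preserve the ideal $S<S_E$ — so that the orbit genuinely carries $g_0$ up to an automorphism of $S$ rather than some unrelated left-invariant metric — which I expect to follow from the uniqueness up to conjugacy of solvable subgroups in standard position (Proposition~\ref{stdpos}).
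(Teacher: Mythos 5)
Your overall strategy is the same as the paper's: pass to the rank-one Einstein extension via the pre-Einstein derivation, apply Theorem~\ref{oldmain} there, and restrict back to $S$. However, the step you yourself flag as ``the main obstacle'' --- extending the pre-Einstein derivation to $\g$ --- is not merely unfinished; the extension you sketch cannot work as stated. You propose to define $\hat\delta\in\Der^L(\g)$ by letting it vanish on $\g_1+\lf$ and agree with $\delta$ ``along the nilpotent directions.'' But $\n=\nilrad(\s)=\n_1\ltimes\n_2$ with $\n_1\subset\g_1$ (it is the Iwasawa nilpotent part of the Levi factor), and the pre-Einstein derivation of $\n$ is \emph{non-singular} (Propositions~\ref{prop.heb}, \ref{prop.nik}, \ref{proplauret}), so it cannot simultaneously vanish on $\n_1$ and restrict to $\delta$ there whenever $\n_1\neq 0$. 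Relatedly, your appeal to Remark~\ref{rem.nik} to make $\delta$ commute with ``$\ad_\g(\g_1+\lf)$'' does not typecheck: $\g_1$ does not normalize $\n$ (only $\n_2=\nilrad(\g)$ is a $\g$-ideal), so $\ad_\g(\g_1)$ is not a subalgebra of $\Der(\n)$ to which Mostow conjugacy can be applied.

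The actual content of the paper at this point is Proposition~\ref{prop:  form_of_pre-Einstein}: one shows that a suitable pre-Einstein derivation of $\n$ has the very particular form $\varphi=\ad_\n(X)+\left[\begin{smallmatrix}0&0\\0&\delta\end{smallmatrix}\right]$ with $X\in\af_1$ and $\delta$ a pre-Einstein derivation of $\n_2$ commuting with $\rho(\g_1+\tf+\af_2)$; only the outer piece $\delta$, supported on $\n_2$, is adjoined to $\g$, while the $\n_1$-part of $\varphi$ is realized as an inner derivation coming from $\af_1\subset\s$. Establishing this form requires the root-space computation of Lemma~\ref{lemma:  derivations of n1 commuting with a1+m}, a generation argument showing $\psi$ preserves $\n_2$, Lemma~\ref{gjlemma}(i) to upgrade commutation with $\rho(\af_1+\n_1)$ to commutation with $\rho(\g_1)$ modulo $\ad(\n_2)$, a conjugation by $\Ad_{N_2}(x)$ to absorb the resulting inner correction, and trace computations verifying the pre-Einstein property for both $\delta$ and $\varphi$. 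None of this is supplied or replaced by an alternative argument. A secondary gap: you take $G=\Isom_0(S,g)$ and assert a suitable ``(linear) Lie group $G'$'' with compact isotropy $L'=L$ exists; since $\Isom_0(S,g)$ can be non-linear (Examples~\ref{type I example}, \ref{type II example}) it need not embed in any group of isometries of the solvsoliton, and the paper must construct a different connected group with Lie algebra $\g$ (Step 1 of its proof, quotienting $\widetilde{G}_1T$ by a central lattice to make $L$ compact and the action effective) before the extension argument can be run.
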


Since solvsolitons on $S$ are unique up to automorphism, all solvsolitons on $S$ have isomorphic isometry algebras.     Thus Theorem~\ref{special case} is equivalent to the following:

\begin{thm}\label{special case V2}  Let $S$ be a simply-connected completely solvable Lie group that admits a solvsoliton, let $g$ be an arbitrary left-invariant metric on $S$, and write $\g:=\cisom(S,g)$.  Then there exists a connected Lie group $G$ with Lie algebra $\g$ and a solvsoliton $g_0$ on $S$ such that $G<\Isom_0(S,g_0)$.   

\end{thm}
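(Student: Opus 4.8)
The plan is to realise $\g=\cisom(S,g)$ inside the isometry algebra of a solvsoliton by passing to a one-dimensional Einstein extension and invoking Theorem~\ref{oldmain}. Write $G=\Isom_0(S,g)$, let $L$ be the isotropy group at the base point with Lie algebra $\lf$, and identify $\s$ with the Lie algebra of the simply transitive left-translation group, so that $\g=\s\oplus\lf$ as vector spaces. Since $(S,g)$ is completely solvable, Proposition~\ref{lem.s1} places $S$ in standard position and yields $\s=\af\ltimes\n$ with $\n=\nilrad(\s)$, together with $\nilrad(\g)=\n_2$ and $\n=\n_1\ltimes\n_2$, while Proposition~\ref{lem.s2} controls the reductive part $\ad_\s(\af)$. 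Let $\delta$ be a pre-Einstein derivation of $\n$, chosen by Remark~\ref{rem.nik} to commute with $\ad_\s(\af)|_\n$, and extend it to a derivation of $\s$ by declaring $\delta|_\af=0$. Proposition~\ref{proplauret}(iv) then gives the completely solvable Einstein extension $(S_E,g_E)$ with $\s_E=\s\ltimes\R\delta$, and Theorem~\ref{oldmain} guarantees that $g_E$ is maximally symmetric.

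The crux is to extend $\delta$ to a derivation $\tilde\delta\in\Der^L(\g)$; recall from Lemma~\ref{D} that $\Der^L(\g)$ consists precisely of the derivations of $\g$ vanishing on $\lf$ and is the Lie algebra of $\Aut^L(G)$. The difficulty is genuine: $\n=\nilrad(\s)$ is not a $\g$-ideal (only $\n_2=\nilrad(\g)$ is), and $\lf$ need not normalise $\n$, so $\delta$ cannot simply be declared zero off $\n$. My approach is to prescribe $\tilde\delta$ using the joint weight-space decomposition of $\g$ under a maximal $\ad$-reductive subalgebra compatible with $\af_1$, with $\lf$, and with the torus $T$ of Proposition~\ref{lem.s1}: set $\tilde\delta$ equal to $\delta$ on the nilpotent directions and to zero on a reductive complement (in particular on $\lf$), and verify the Leibniz rule weight-space by weight-space. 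Brackets of a reductive generator with a nilpotent vector are handled because $\delta$ commutes with the reductive action, and brackets internal to $\nilrad(\g)$ because $\delta$ is a derivation there. The genuinely delicate point, which I expect to be the main obstacle, is that the nilpotent directions include the Iwasawa nilradical $\n_1\subset\g_1$; since derivations of the semisimple Levi factor are inner, the component of $\tilde\delta$ along $\g_1$ is forced to be $\ad_{\g_1}(H_1)$ for some semisimple $H_1$, and this forced inner part must be reconciled with the prescribed values of $\delta$ on $\n_1$ and $\n_2$, with the mixed brackets $[\n_1,\n_2]$, and with the vanishing of $\tilde\delta$ on $\lf$. Making a single global choice that respects all of these is where the semisimplicity of $\delta$, its real eigenvalues, and the defining trace identity for a pre-Einstein derivation all enter.

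Granting $\tilde\delta\in\Der^L(\g)$, I integrate it to a one-parameter subgroup of $\Aut^L(G)$ and form $G'=G\rtimes_{\tilde\delta}\R$, a connected Lie group with Lie algebra $\g'=\g\ltimes\R\tilde\delta$. Because $\tilde\delta|_\s=\delta$, the subalgebra $\s+\R\tilde\delta$ is isomorphic to $\s_E$, so $S_E<G'$; because $\tilde\delta|_\lf=0$, the compact group $L$ is centralised by the new $\R$-factor and remains the full isotropy of $G'$ at the base point. A dimension count gives $\g'=\s_E\oplus\lf$ with $\s_E\cap\lf=0$, so $S_E$ acts simply transitively on $G'/L$, which we identify with $S_E$. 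As $L$ is compact, $G'/L$ carries a $G'$-invariant metric $\hat g_E$, that is, a left-invariant metric on $S_E$; maximal symmetry of $g_E$ then yields $\Phi\in\Aut(S_E)$ with $G'<\Phi\,\Isom(S_E,g_E)\,\Phi^{-1}=\Isom(S_E,\bar g_E)$, where $\bar g_E$ is an automorphic pullback of $g_E$ and hence again a completely solvable Einstein metric on $S_E$.

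Finally I restrict to $S$. The $G$-orbit of the base point is $G/L$, a copy of $S$ embedded in $S_E$ as the codimension-one subgroup with Lie algebra $\s$, and $G<G'$ acts by $\bar g_E$-isometries preserving this orbit. The induced metric $g_0:=\bar g_E|_S$ is therefore $G$-invariant, and by the structure theory relating Einstein extensions to solvsolitons (Propositions~\ref{prop.heb} and \ref{proplauret}) together with the uniqueness of solvsolitons (Proposition~\ref{solvsol uniq'}) --- after, if necessary, arranging $\Phi$ to preserve $S$ --- it is a solvsoliton on $S$. Since $G=\Isom_0(S,g)$ acts effectively on $G/L$, this restriction embeds $G$ into $\Isom_0(S,g_0)$ with $\operatorname{Lie}(G)=\g=\cisom(S,g)$, which is exactly the desired conclusion.
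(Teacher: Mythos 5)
Your overall architecture --- pass to the rank-one Einstein extension via the pre-Einstein derivation, invoke Theorem~\ref{oldmain}, then restrict back to $S$ --- is the same as the paper's, but there are two genuine gaps, and the first is fatal as stated. You take $G=\Isom_0(S,g)$ itself and conclude $\Isom_0(S,g)<\Isom_0(S,g_0)$. That conclusion is false in general: Example~\ref{type II example} produces a completely solvable $S$ (with nontrivial center) and a metric whose connected isometry group is a \emph{non-linear} Lie group, whereas the connected isometry group of the solvsoliton on that $S$ is $\SL(3,\R)\times\R$; a non-linear connected Lie group admits no injective homomorphism into a linear one, so the containment you claim cannot hold for any choice of $g_0$. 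This is exactly why the theorem asserts only that \emph{some} connected group with Lie algebra $\g$ embeds, and why the paper's Step 1 replaces $\Isom_0(S,g)$ by a different linear group $G=(\widetilde G_1T/D)\ltimes S_2$ with the same Lie algebra, untwisting the center so that $Z(G)\cap L=\{e\}$ and $L=(\widetilde K\times T)/D$ is an honest product. Without that replacement the later steps cannot go through.

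Second, the step you explicitly defer (``granting $\tilde\delta\in\Der^L(\g)$'') is the technical heart of the proof, and the particular extension you ask for does not exist in general. You require $\tilde\delta$ to vanish on $\lf$ (which contains $\kf_1$) while restricting on all of $\n=\n_1\ltimes\n_2$ to the pre-Einstein derivation. But any derivation of $\g$ preserves the radical and so induces an inner derivation $\ad_{\g_1}(Y)$ of the Levi factor; vanishing on $\kf_1$ forces $Y$ to centralize $\kf_1$, which is incompatible with acting on $\n_1$ as $\ad(X)|_{\n_1}$ for a nonzero $X\in\af_1$ (nonzero real eigenvalues). The paper's Proposition~\ref{prop:  form_of_pre-Einstein} resolves this differently: it shows the pre-Einstein derivation of $\n$ can be normalized to $\varphi=\ad_\n(X)+\delta$ with $X\in\af_1$ and $\delta$ a pre-Einstein derivation of the $\g$-ideal $\n_2=\nilrad(\g)$ commuting with the action of $\g_1+\tf+\af_2$; only $\delta$ is extended to $\g$ (trivially on $\g_1+\tf+\af_2$), and this suffices because $\s\rtimes\R\varphi=\s\rtimes\R\delta$, the inner summand $\ad(X)$ not changing the one-dimensional extension. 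Establishing that normal form --- the block-triangular shape of $\psi$, the conjugation by $\Ad(N_2)$ using non-singularity of the pre-Einstein derivation, and the trace computation showing the resulting $\n_2$-block is pre-Einstein --- is precisely the content missing from your outline.
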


The proof  consists of three steps:

\begin{itemize}
\item[1.] Constructing the candidate Lie group $G$.
\item[2.] Showing that the pre-Einstein derivation of the nilradical $\n$ of $\s$ extends to a derivation of $\g$.
\item[3.] By Proposition~\ref{proplauret}, unless $S$ itself admits an Einstein metric, the pre-Einstein derivation yields a one-dimensional extension $S'$ of $S$ that admits an Einstein metric $g_E$ whose restriction $g_0$ to $S$ is a Ricci soliton.   In the latter case, we will apply step 2 and the maximal symmetry of Einstein metrics (see\cite{GordonJablonski:EinsteinSolvmanifoldsHaveMaximalSymmetry}) to obtain a one-dimensional extension $G'$ of $G$ that acts isometrically on $g_E$.   This will enable us to conclude that $G$ acts isometrically on $(S,g_0)$.  
\end{itemize}

\subsection{Step 1: Construction of the candidate Lie group $G$}\label{sec: step 2 - linearizing the isom group}~

\begin{remark} Since the second author \cite[Proposition 7.1]{Jablo:StronglySolvable} showed that the isometry group of any solvsoliton is a linear Lie group, we will construct a linear Lie group as our candidate for $G$.   While the result in \cite{Jablo:StronglySolvable} motivates our proof, we do not use this result directly.   In particular, Theorem~\ref{special case V2} and its proof yield a new proof that the 
isometry group of any solvsoliton is linear.  
\end{remark}

 Let $G^*=\Isom_0(S,g)$.    By Propositions~\ref{stdpos} and \ref{lem.s1}, we have a Levi decomposition $G^*=G_1^*G_2^*$, an Iwasawa decomposition $G_1^*=K^*S_1$ and a decomposition $G_2^*=T\ltimes S_2$, where $T$ is a torus that commutes with $G_1$ and is contained in the isotropy group $L^*$ of $g$ at the identity.   In particular, $T$ acts effectively on $S_2$, preserving the metric $g$.   Moreover, $S\simeq S_1\ltimes S_2$, and $S_2$ is a normal subgroup of $G$. 

 Set $G_2 :=G_2^*$.  Let  $\widetilde{G}_1$ be the simply-connected cover of $G_1^*$.  We can lift the adjoint action of $G_1^*$ on $S_2$ to an action of  $\widetilde{G}_1$ commuting with $T$ and thus form the semi-direct product $\widetilde{G}_1\ltimes G_2$.   Writing $\widetilde{G}_1=\widetilde{K}S_1$, the center of $\widetilde{G}_1\ltimes G_2$ intersects $\widetilde{K}T$ in a discrete subgroup $D$ with compact quotient. 
 Set 
 $$G= (\widetilde{G}_1T/D)\ltimes S_2.$$
Since $T$ acts effectively on $S_2$, we have $T\cap D = \{e\}$ and so $G_2$ can be realized as a subgroup of $G$.  We note that $G$ has Levi decomposition $G=G_1G_2$ with $G_1= \widetilde{G}_1/(D\cap \widetilde{K})$, but it is possible that $G_1\cap T$ is non-trivial.  Further,  $G_1$ has Iwasawa decomposition $G_1=KS_1$ with
 $K= \widetilde{K}/(D\cap  \widetilde{K})$.  Set
 $$L= (\widetilde{K}\times T)/D.$$
 Then $L$ is compact and $G=LS$ with $L\cap S=\{e\}$.  We note that, by design, ${Z(G)\cap L=\{e\}}$.  
 
We show $G$ acts effectively on $G/L$, equivalently that $L$ contains no non-trivial normal subgroups of $G$.  Both $\lf$ and $\lf^*$ (the Lie algebras of $L$ and $L^*$) are compactly embedded subalgebras of $\g$ complementary to $\s$.  Since $\s$ is completely solvable, $\uf:=\lf\oplus Z(\g)$ and $\uf^*=\lf^* \oplus Z(\g)$ are maximal compactly embedded subalgebras of $\g$ and thus are mutually conjugate.   Since $G^*$ acts effectively on $S$, the isotropy algebra $\lf^*$ contains no proper $\g$-ideals, so $Z(\g)$ is a maximal $\g$-ideal in $ \uf^*$ and consequently also in $\uf$.  Hence $\lf$ contains no $\g$-ideals.  Thus any subgroup of $L$ that is normal in $G$ must be discrete, hence central in $G$.  Since $Z(G)\cap L=\{e\}$, it follows that the action is effective.

\subsection{Step 2: Extending the pre-Einstein derivation to $\g$}

We will state the extension result in a somewhat more general context than needed.   It would be interesting to know whether any of the hypotheses of the proposition can be removed.

     \begin{prop}\label{prop:  form_of_pre-Einstein} Assume:
     \begin{enumerate} \item $S$ is a simply-connected completely solvable Lie group of the form $S=A\ltimes N$ where $A$ is abelian, $N$ is the nilradical, and $\ad_\s(\af)$ is reductive.  
     \item Some (hence every) pre-Einstein derivation of $\n$ is non-singular. 
     \item $G$ is a connected Lie group of the form $G=LS$ where $L$ is compact, $L\cap S$ is trivial and $L$ contains no normal subgroups of $G$.  
     \end{enumerate}
   Then there exists a pre-Einstein derivation of $\n$ that extends to a derivation of $\g$.    Moreover, letting $A=A_1A_2$, $N=N_1N_2$ and $G=G_1G_2=(KA_1N_1)(T\ltimes A_2N_2)$ be the decompositions guaranteed by Propositions~\ref{stdpos}, \ref{lem.s1} and \ref{lem.s2}, the pre-Einstein derivation leaves each of $\n_1$ and $\n_2$ invariant and is of the form
	$$\varphi =\ad_\n(X)+\begin{bmatrix}0&0\\0&\delta\end{bmatrix}$$
for some $X\in\af_1$ and some pre-Einstein derivation $\delta$ of $\n_2$ that commutes with the adjoint action of $\g_1+\tf+\af_2$ on $\n_2$.
\end{prop}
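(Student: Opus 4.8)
The plan is to build the pre-Einstein derivation of $\n$ out of two structural ingredients — the Einstein (symmetric-space) geometry of the Iwasawa factor $S_1=A_1N_1$, which controls $\n_1$, and a pre-Einstein derivation of $\n_2$ tuned to commute with the reductive part of $\g$ — and then to verify both the block form and the extendability to $\g$. First I would isolate the reductive subalgebras that govern the construction. Since $\af+\tf$ is abelian (the summands $\af_1,\af_2,\tf$ pairwise commute, using that $\af_2$ and $\tf$ commute with $\g_1$ by Propositions~\ref{lem.s1} and \ref{lem.s2}) and acts semisimply on $\n$ — with $\ad_\s(\af)|_\n$ semisimple by hypothesis and $\ad_\g(\tf)|_\n$ skew — the algebra $\q:=\adg(\af+\tf)|_\n$ is a reductive subalgebra of $\Der(\n)$ preserving the splitting $\n=\n_1\oplus\n_2$; note that $\kf$ is deliberately excluded, as it does not normalize $\n$. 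On the ideal $\n_2=\nilrad(\g)$ one has the larger reductive algebra $\adg(\g_1+\tf+\af_2)|_{\n_2}\subset\Der(\n_2)$ (here $\g_1$ does preserve $\n_2$). Using Remark~\ref{rem.nik}, I would fix a pre-Einstein derivation $\delta$ of $\n_2$ commuting with $\adg(\g_1+\tf+\af_2)|_{\n_2}$, and arrange the pre-Einstein derivation $\varphi$ of $\n$ to centralize $\q$.

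Next I would pin down the $\n_1$-block. The group $S_1=A_1N_1$ is the symmetric space $G_1/K$ (of noncompact type, modulo compact factors of $G_1$, which contribute nothing to $\af_1,\n_1$), hence Einstein of negative Ricci curvature; by Proposition~\ref{prop.heb} the nilradical $\n_1$ is a nilsoliton whose Einstein derivation is $\ad(X_0)|_{\n_1}$ for a distinguished $X_0\in\af_1$, and by Proposition~\ref{prop.nik}(iii) this is a positive multiple of the pre-Einstein derivation of $\n_1$. To exploit this against the trace condition $\mytrace_\n(\varphi A)=\mytrace_\n(A)$, I would first reduce to $A\in C_{\Der(\n)}(\q)$: decomposing $\Der(\n)$ into $\ad(\q)$-weight spaces, each nonzero-weight component is traceless and is killed in trace by the weight-zero $\varphi$, so only $\q$-centralizing derivations contribute. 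On this centralizer the $\n_1$- and $\n_2$-contributions are coupled through $\adg(\af_1)|_{\n}$, and the nilsoliton normalization of $\n_1$ should force the $\n_1$-block of $\varphi$ to be the scalar multiple $\ad(X)|_{\n_1}$ of $\ad(X_0)|_{\n_1}$, with the residual $\n_2$-condition collapsing to $\delta$ being pre-Einstein for $\n_2$. Because $\ad(X)|_\n$ (block diagonal, as $[\af_1,\n_1]\subset\n_1$ and $[\af_1,\n_2]\subset\n_2$) and the extension-by-zero $\Delta_0$ of $\delta$ are commuting semisimple real-eigenvalue derivations — they commute since $\delta$ centralizes $\adg(\g_1)|_{\n_2}\ni\ad(X)|_{\n_2}$ — the sum $\varphi=\ad(X)|_\n+\Delta_0$ is a semisimple derivation of $\n$ with real eigenvalues of exactly the asserted form.

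Finally I would extend $\varphi$ to $\g$. Writing $\g=(\g_1+\tf+\af_2)\oplus\n_2$, with $\g_1+\tf+\af_2$ a reductive subalgebra and $\n_2=\nilrad(\g)$ the complementary ideal, define $\Delta\in\End(\g)$ by $\Delta|_{\g_1+\tf+\af_2}=0$ and $\Delta|_{\n_2}=\delta$, and set $\hat\varphi=\adg(X)+\Delta$. A three-case Leibniz check shows $\Delta\in\Der(\g)$: on the reductive subalgebra both sides vanish; within $\n_2$ it is the derivation $\delta$; and for $Y\in\g_1+\tf+\af_2$ and $Z\in\n_2$ the required identity $\delta([Y,Z])=[Y,\delta Z]$ is precisely that $\delta$ commutes with $\adg(Y)|_{\n_2}$, which holds by construction. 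Since $\adg(X)\in\Der(\g)$ and $\hat\varphi|_\n=\varphi$, this yields the extension, and the commuting property built into $\delta$ is exactly what makes the extension work.

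The step I expect to fight with is the trace analysis of the second paragraph. After reduction to $C_{\Der(\n)}(\q)$, the blocks $\n_1$ and $\n_2$ may share $\q$-weights, so a $\q$-centralizing derivation need not be block diagonal, and the Leibniz constraint genuinely couples the two blocks through $\adg(\af_1)|_{\n_2}$. Indeed the naive guess that the blocks decouple cannot hold, since $\ad(X)|_{\n_2}$ is a nonzero semisimple element of $\Der(\n_2)$ and hence is not trace-orthogonal to $\Der(\n_2)$; it is precisely the combination $\ad(X)|_{\n_2}+\delta$, not $\delta$ alone, that the trace pairing sees on $\n_2$. Disentangling this coupling so as to simultaneously force $\varphi|_{\n_1}=\ad(X)|_{\n_1}$ and to extract a bona fide pre-Einstein $\delta$ on $\n_2$ is where the Einstein structure of $G_1/K$ must enter essentially, and is the crux of the argument.
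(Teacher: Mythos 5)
Your proposal runs in the opposite direction from the paper's proof, and the step you yourself flag as ``the crux'' is a genuine gap that this architecture makes essentially unavoidable. You build $\varphi$ bottom-up: choose $\delta$ pre-Einstein for $\n_2$ commuting with the adjoint action of $\g_1+\tf+\af_2$, choose $X\in\af_1$ from the Einstein geometry of $G_1/K$, and then try to verify the trace condition for $\ad_\n(X)+\delta$ against all of $\Der(\n)$. That verification is never carried out, and the specific mechanism you propose for it cannot work: the trace condition on $\n=\n_1\ltimes\n_2$ couples the two blocks through $\ad(X)|_{\n_2}$, so the restriction to $\n_1$ of the pre-Einstein derivation of $\n$ is in general \emph{not} a scalar multiple of the symmetric-space Einstein derivation $\ad(X_0)|_{\n_1}$ of $\n_1$ alone; Proposition~\ref{prop.nik}(iii) applied to $G_1/K$ constrains the pre-Einstein derivation of the standalone algebra $\n_1$, which is a different object, and when $\dim\af_1>1$ the correct $X$ depends on how $\n_1$ acts on $\n_2$. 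The paper avoids this entirely by working top-down: it starts from a pre-Einstein derivation $\psi$ of $\n$, which exists by Nikolayevsky's theorem and by Remark~\ref{rem.nik} may be chosen to commute with $\ad_\n(\af+\m+\tf)$, and then proves that $\psi$ has the asserted form. The $\n_1$-block is identified not through Heber's Einstein structure but through Lemma~\ref{lemma:  derivations of n1 commuting with a1+m}, a root-space computation showing that any real-eigenvalue derivation of $\n_1$ commuting with $\ad(\af_1+\m)$ is $\ad_{\n_1}(X)$ for some $X\in\af_1$.

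Two further ingredients of the paper's argument are absent from your plan. First, a pre-Einstein derivation chosen to commute with $\ad_\n(\af+\m+\tf)$ only yields a $\delta$ commuting with the Iwasawa part $\af_1+\n_1+\m+\tf+\af_2$ modulo $\ad(\n_2)$; to upgrade this to genuine commutation with all of $\g_1+\tf+\af_2$ one passes to $\Der(\n_2)/\ad(\n_2)$, applies Lemma~\ref{gjlemma}(i), and then corrects $\delta$ by an inner derivation $\ad_{\n_2}(U)$ realized as conjugation by $\Ad(x)$ with $x\in N_2$ --- and it is precisely here, via Lemma~\ref{gjlemma}(ii), that hypothesis (ii) (non-singularity of the pre-Einstein derivation) is used. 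Your proposal never invokes that hypothesis, which is a symptom of the missing step. Second, the resulting derivation carries an off-diagonal block $\beta'$ mapping $\n_1$ into $Z(\n_2)$, and one must check separately that discarding $\beta'$ preserves the trace condition (step (vi) of the paper's proof); your candidate is block-diagonal by fiat, but only because the trace condition for it was never established. Your final paragraph --- extending $\delta$ to a derivation of $\g$ by zero on $\g_1+\tf+\af_2$ and checking Leibniz --- is correct and matches what the paper does in Step 3 of the proof of the main theorem, but it presupposes the structural statement that the rest of your argument does not deliver.
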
 
   
Note that Propositions~\ref{prop.heb}, \ref{prop.nik}, and \ref{proplauret} guaranteed that every completely solvable Lie group that admits a solvsoliton satisfies the hypotheses of Proposition~\ref{prop:  form_of_pre-Einstein}.

\begin{lemma}\label{lemma:  derivations of n1 commuting with a1+m} Let $\m$ denote the centralizer of $\af_1$ in $\kf$, and let $\m_1=[\m,\m]$.   Then the only derivations of $\n_1$ that commute with $(\ad_{\g_1}(\af_1+\m))_{|\n_1}$ are those in $(\ad_{\g_1}(\af_1+Z(\m_1)))_{|\n_1}$.
\end{lemma}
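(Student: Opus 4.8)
The plan is to exploit the restricted root space decomposition of $\n_1$ with respect to $\af_1$, together with the fact that $\n_1$ is generated by its simple root spaces. Write $\g_1=\kf\oplus\af_1\oplus\n_1$ for the Iwasawa decomposition and $\n_1=\bigoplus_{\alpha\in\Sigma^+}\g_\alpha$ for the decomposition into positive restricted root spaces, so that $\ad(H)$ acts on $\g_\alpha$ by the scalar $\alpha(H)$ for $H\in\af_1$ while $\m=Z_\kf(\af_1)$ preserves each $\g_\alpha$. The easy inclusion is that $\ad_{\g_1}(\af_1+Z(\m))|_{\n_1}$ already consists of derivations commuting with $\ad(\af_1+\m)|_{\n_1}$: indeed $\af_1\oplus Z(\m)$ is precisely the center of the minimal-parabolic Levi $Z_{\g_1}(\af_1)=\af_1\oplus\m$ (here $Z(\m)$, the center of $\m$, coincides with the centralizer of $\m_1=[\m,\m]$ in $\m$), so its adjoint action commutes with that of all of $\af_1+\m$. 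The content is the reverse inclusion, which I would establish for an arbitrary derivation $D$ of $\n_1$ commuting with $\ad(\af_1+\m)|_{\n_1}$.

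First I would reduce to the simple root spaces. Since $D$ commutes with $\ad(\af_1)$ and the root spaces $\g_\alpha$ are exactly the joint $\ad(\af_1)$-eigenspaces (distinct restricted roots being distinct functionals on $\af_1$), $D$ preserves each $\g_\alpha$. Because $\n_1$ is generated as a Lie algebra by the simple root spaces $\g_{\alpha_1},\dots,\g_{\alpha_\ell}$, the derivation $D$ is completely determined by the endomorphisms $D_i:=D|_{\g_{\alpha_i}}$; moreover each $D_i$ commutes with the action of $\m$, i.e.\ $D_i\in\operatorname{End}_{\m}(\g_{\alpha_i})$. The goal then becomes to show that the tuple $(D_i)_i$ arising from a genuine derivation is realized by $\ad(Y)$ for a single $Y\in\af_1+Z(\m)$.

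The crux is to identify $\operatorname{End}_{\m}(\g_{\alpha_i})$ and to match it with the adjoint action of the center $\af_1+Z(\m)$. Here I would invoke Schur's lemma for the compact group $\exp(\m)$: on each simple root space the commutant $\operatorname{End}_{\m}(\g_{\alpha_i})$ is a product of matrix algebras of real, complex, or quaternionic type, and $\ad(\af_1)|_{\g_{\alpha_i}}$ supplies the real scalar $\alpha_i$ while $\ad(Z(\m))|_{\g_{\alpha_i}}$ supplies the remaining commuting (complex or quaternionic) structure --- the latter because $\exp(Z(\m))$ already acts as the relevant central torus on $\g_{\alpha_i}$ (for instance as the complex structure $J$ when $\g_{\alpha_i}$ is of complex type, as in $\mathfrak{su}(n,1)$). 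The extra constraint that $D$ be a \emph{global} derivation, which forces the endomorphisms on the various $\g_{\alpha_i}$ to be mutually compatible along the brackets that generate the non-simple root spaces, is what pins the tuple $(D_i)$ down to a single inner element and rules out contributions coming from any reducibility of $\g_{\alpha_i}$ as an $\m$-module. I expect this matching step to be the main obstacle, as it requires a uniform (case-free, or classification-based) argument that the $\m$-equivariant endomorphisms realizable by derivations are exactly those coming from $Z(\af_1+\m)=\af_1+Z(\m)$.

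Finally I would assemble the conclusion: choosing $Y\in\af_1+Z(\m)$ with $\ad(Y)|_{\g_{\alpha_i}}=D_i$ for every $i$ (possible by the previous step), the difference $D-\ad(Y)|_{\n_1}$ is a derivation of $\n_1$ vanishing on every simple root space, hence vanishing on all of $\n_1$ since these generate it. Therefore $D=\ad(Y)|_{\n_1}\in\ad_{\g_1}(\af_1+Z(\m))|_{\n_1}$, giving the asserted description. (I note in passing that if one additionally imposes that $D$ be semisimple with real eigenvalues, as for a pre-Einstein derivation, then the purely imaginary $\ad(Z(\m))$-part must drop out, leaving only $\ad(\af_1)|_{\n_1}$ --- which is the form in which the lemma is ultimately used.)
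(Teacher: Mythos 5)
There is a genuine gap, and it sits exactly at the step you yourself flag as ``the main obstacle'': you never prove that the $\m$-equivariant endomorphisms of the simple restricted root spaces that are realizable by a global derivation of $\n_1$ are precisely those coming from $\ad(\af_1+Z(\m))$. This is not a technicality --- it is the entire content of the lemma. Over $\R$ the restricted root spaces $\g_{\alpha_i}$ can have dimension greater than one, can be reducible as $\m$-modules, and their commutants $\End_{\m}(\g_{\alpha_i})$ can be strictly larger than the image of $\af_1+Z(\m)$ (Schur's lemma only bounds the commutant of each \emph{irreducible} summand, and says nothing about how the scalars on distinct summands or distinct simple roots must be correlated). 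Your proposal defers all of this to ``the derivation compatibility along brackets,'' but gives no argument, uniform or classification-based, that this compatibility pins the tuple $(D_i)$ down to a single inner element. As written, the proof establishes only that $D$ preserves each restricted root space and is determined by its restrictions to the simple ones --- which is the easy part.

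The paper's proof avoids this difficulty entirely by complexifying. Extending $D$ complex-linearly to $\n_1^{\C}$ and decomposing with respect to a Cartan subalgebra $\h$ of $\g_1^{\C}$ compatible with the Iwasawa decomposition, every (complex) root space is one-dimensional, so $D$ acts on $\g^{\alpha}$ as a scalar $c_\alpha$; commutation with $\m$ forces $c_{\alpha+k\gamma}=c_\alpha$ for $\gamma\in P_-$, and the derivation identity forces $c_{\alpha+\beta}=c_\alpha+c_\beta$. These relations let one extend $D$ to a derivation of all of $\g_1^{\C}$ (zero on $\m^{\C}+\af_1^{\C}$, $-c_\alpha\Id$ on $\g^{-\alpha}$), which is then inner, $D=\ad(Y)$, and the constraints (semisimplicity, normalizing $\n$, commuting with $\af_1+\m$, preserving the real form) place $Y$ in $\af_1+Z(\m)$. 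If you want to salvage your real-root-space approach, you would need to supply the missing matching argument; the cleanest way to do so is essentially to reproduce this complexification, at which point working with restricted roots buys you nothing.
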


\begin{proof}
We change notation (in this proof only) in order to be consistent with Helgason \cite{Helgason}.  We let $\g_0$ be a semisimple Lie algebra of noncompact type, $\g_0=\kf_0+\af_0+\n_0$ an Iwasawa decomposition and $\m_0$ the centralizer of $\af_0$ in $\kf_0$.   Let $D$ be a derivation of $\n_0$ that commutes with $\af_0+\m_0$.   Let $\g=\g_0^\C$ denote the complexification of $\g_0$, and write $\n=\n_0^\C$, etc.  so $\g=\kf+\af+\n$.  Extend $D$ to a derivation of $\n$, still denoted $D$, by making it complex linear.   

We show that $D$ extends to a derivation of $\g$.   Recall (see \cite[page 260]{Helgason}) how the Iwasawa decomposition is constructed.   One has a Cartan decomposition $\g_0=\kf_0+\p_0$, a Cartan subalgebra $\h$ of $\g$ satisfying $\h\cap \g_0=\h_{\p_0} +\h_{\kf_0}$ where $\h_{\p_0}$ is a maximal abelian subalgebra in $\p_0$ (this will be our $\af_0$) and $\h_{\kf_0}<\kf_0$.   Moreover $\h_{\R}:= i\h_{\kf_0} + \h_{\p_0}$ is a real form of $\h$.  Take compatible orderings on $\h_{\p_0}$ and  
$\h_{\R}$  
and let $\Delta< \h_{\R}^*$ be the associated system of positive roots.   Following Helgason's notation, write $\Delta = P_+ \cup P_-$ where $P_-$ consists precisely of those positive roots that vanish on $\h_{\p_0}$.  For $\alpha\in\Delta$, let $\g^\alpha$ be the corresponding root space in $\g$ (i.e., the complex root space).  
Then 
$\g=\kf+\af+\n$ where $\af:=\h_{\p_0}^\C$, and 
$$\n=\sum_{\alpha\in P_+}\g^\alpha.$$
The subspace $\m=\m_0^\C$ of $\kf$ is given by 
$$\mf=\h_{\kf_0}^\C + \sum_{\alpha\in P_-}(\g^\alpha +\g^{-\alpha}).$$
Note that 
$$\g=\m+\af +\n + \sum_{\alpha\in P_+}\g^{-\alpha}.$$
  
  Since $D$ commutes with $\ad(\af +\m)$ and thus with $\ad(\h)$, it preserves each root space $g^\alpha$, $\alpha \in P_+$.  Since the root spaces have complex dimension 1, we have $D|_{\g_\alpha}=c_\alpha\Id$ for each $\alpha\in P_+$ where the $c_\alpha$'s are constants.  In particular, $D$ is a semisimple derivation.  Moreover, the fact that $D$ commutes with $\ad(\m)$ implies that if $\gamma\in P_-$, $\alpha\in P_+$ and if $\alpha +k\gamma \in P_+$ with $k\in\{\pm 1\}$, then $c_{\alpha+k\gamma}=c_\alpha$.   The fact that $D$ is a derivation on $\mathfrak n$ is equivalent to the condition that $c_{\alpha+\beta}=c_\alpha +c_\beta$ whenever $\alpha, \,\beta$ and $\alpha +\beta$ all belong to $P_+$.  Extend $D$ to $\g$ by linearity and by setting $D|_{\m +\af}=0$ and $D|_{\g^{-\alpha}}=-c_{\alpha}\Id$ for $\alpha\in P_+$.  Computing the various possible brackets among the positive and negative weight spaces of $\mathfrak g$, one sees that $D$ is a derivation of $\g$.
  
  Since all derivations of the semisimple Lie algebra $\g$ are inner, we have $D=\ad(Y)$ for some $Y\in \g$.  Since $D$ normalizes $\n$, $D$ is semisimple and $D$ commutes with $\af+\m$, we must have $Y\in \af+Z(\m)$.

Finally, since $D$ preserves $\mathfrak n_0$ and $\mathfrak a+\mathfrak m$ is the complexification of $\mathfrak a_0+\mathfrak m_0$, we see that we must have $Y\in \mathfrak a_0+\mathfrak m_0$ and so $Y\in \mathfrak a_0+Z(\mathfrak m_0)$, as desired.

\end{proof}

We gather here several results from \cite{GordonJablonski:EinsteinSolvmanifoldsHaveMaximalSymmetry} that will be needed below.  In preparation for the third result, let $\Der(\n_2) = \Der_1(\n_2) + \Der_2(\n_2)$ be a Levi decomposition.   Note that since $\Der(\n_2)$ is the Lie algebra of an algebraic group, the radical $\Der_2(\n_2)$ splits as a semi-direct sum 
	$$\Der_2(\n_2)=\Der^{ab}(\n_2)+\nilrad(\Der(\n_2))$$
where $\Der^{ab}$ is an abelian subalgebra commuting with $\Der_1(\n_2)$.   The subalgebra $\Der_1(\n_2) + \Der^{ab}(\n_2)$ is a maximal fully reducible subalgebra of $\Der(\n_2)$ (i.e., maximal among all subalgebras of $\Der(\n_2)$ that act fully reducibly on $\n_2$), and  $\Der^{ab}(\n_2)$  consists of semisimple derivations of $\n_2$.   

\begin{lemma}\label{gjlemma}~ 
\begin{enumerate}
\item \cite[Lemma 5.4]{GordonJablonski:EinsteinSolvmanifoldsHaveMaximalSymmetry}
 Let $H$ be a semisimple Lie group of noncompact type and let $\rho: H\to GL(V)$ be a finite-dimensional representation.  Let $H=KAN$ be an Iwasawa decomposition.  If an element $T\in End(V)$ commutes with all elements of $\rho(AN)$, then it commutes with all of $\rho(H)$.  
 \item \cite[Lemma 5.5]{GordonJablonski:EinsteinSolvmanifoldsHaveMaximalSymmetry} Let $H$ be a connected Lie group and $N$ its nilradical.   Let $W\in\h$, and suppose that $\ad(W)|_\n:\n\to\n$ is a non-singular derivation.   Then the orbit of $W$ under $\Ad_H(N)$ is given by
$$\Ad_H(N)(W)=\{W+X: X\in\n\}.$$
\item \cite[Lemma 5.7]{GordonJablonski:EinsteinSolvmanifoldsHaveMaximalSymmetry} Let $\sigma$ be a semisimple derivation of $\n_2$ with real eigenvalues.  Let $\mathfrak r$ be any fixed choice of subalgebra of $\Der_1(\n_2) + \Der^{ab}(\n_2)$ that contains $\sigma$ and  $\Der^{ab}(\n_2)$.  If $ \mytrace(\sigma D)=\mytrace(D) $ for all $D \in \mathfrak r$, then $\sigma$ is a pre-Einstein derivation of $\n_2$.
\end{enumerate}
\end{lemma}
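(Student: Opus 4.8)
The final statement collects three structural facts; I would prove them independently, since their arguments are unrelated.

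\textbf{Part (i).} Since $AN$ is connected, it suffices to show that the subalgebra $\h_T:=\{X\in\h:[\rho_*(X),T]=0\}$, which by hypothesis contains $\af+\n$, is all of $\h$. Write the restricted-root decomposition $\h=\mf+\af+\bigoplus_{\lambda\in\Sigma}\h_\lambda$ with $\n=\bigoplus_{\lambda>0}\h_\lambda$, where $\mf$ is the centralizer of $\af$ in $\kf$. The plan is first to show $T$ commutes with every negative root space. Fix $\lambda>0$ and $F\in\h_{-\lambda}$, and embed $F$ in a triple $(E,A,F)$ with $E\in\h_\lambda$, $A=[E,F]\in\af$ and $\lambda(A)=2$. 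Since $T$ commutes with $\rho_*(E)$ and $\rho_*(A)$, the Jacobi identity gives that $S:=[T,\rho_*(F)]$ satisfies $[\rho_*(E),S]=0$ and $[\rho_*(A),S]=-2S$; thus, for the $\mathfrak{sl}_2$ acting on $\End(V)$ by commutators, $S$ is a highest-weight vector of negative weight, forcing $S=0$. Hence $T$ commutes with each $\h_{-\lambda}$, and therefore with every bracket $[\h_\lambda,\h_{-\lambda}]$. Since the root spaces $\h_{\pm\lambda}$ generate $\h$ (their brackets recover $\af+\mf$), we conclude $\h_T=\h$, so $T$ commutes with $\rho(H)$.

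\textbf{Part (ii).} The inclusion $\Ad_H(N)(W)\subseteq W+\n$ is immediate: $\n$ is an ideal, so for $Y\in\n$ every term beyond $W$ in $\Ad(\exp Y)W=e^{\ad Y}W$ lies in $\n$. For the reverse inclusion I would prove that, for each $X\in\n$, the equation $e^{\ad Y}W=W+X$ has a solution $Y\in\n$, by successive approximation along the lower central series $\n=\n^{(1)}\supseteq\n^{(2)}\supseteq\cdots$ (each $\n^{(j)}$ an $\h$-ideal). Modulo $\n^{(2)}$ the equation reads $-\ad(W)Y\equiv X$, which is solvable because $\ad(W)$ is invertible on $\n$ and hence on every quotient $\n^{(j)}/\n^{(j+1)}$; correcting $Y$ successively modulo $\n^{(3)},\n^{(4)},\dots$ terminates after finitely many steps and yields an exact solution. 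Equivalently, $\Ad(N)$-equivariance together with invertibility of $\ad(W)|_\n$ shows the orbit map $N\to W+\n$ has everywhere-surjective differential, hence open image, which is also closed since $\Ad(N)$ is unipotent; connectedness of $W+\n$ then finishes it.

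\textbf{Part (iii).} This is the only part requiring an idea, and the plan is to play $\sigma$ off against a genuine pre-Einstein derivation. By Proposition~\ref{prop.nik} and the conjugacy of maximal reductive subalgebras (Remark~\ref{rem.nik}), $\n_2$ has a pre-Einstein derivation $\varphi_0$ lying in the center of the maximal reductive subalgebra $\Der_1(\n_2)+\Der^{ab}(\n_2)$; since $\Der_1(\n_2)$ is semisimple this center equals $\Der^{ab}(\n_2)$, so $\varphi_0\in\Der^{ab}(\n_2)\subseteq\mathfrak r$ and $\varphi_0$ commutes with $\sigma\in\mathfrak r\subseteq\Der_1(\n_2)+\Der^{ab}(\n_2)$. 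Because $\varphi_0$ is pre-Einstein, $\mytrace(\varphi_0 D)=\mytrace(D)$ for all $D\in\Der(\n_2)$, in particular on $\mathfrak r$; subtracting this from the hypothesis $\mytrace(\sigma D)=\mytrace(D)$ on $\mathfrak r$ gives $\mytrace((\sigma-\varphi_0)D)=0$ for all $D\in\mathfrak r$. Taking $D=\sigma-\varphi_0\in\mathfrak r$ yields $\mytrace((\sigma-\varphi_0)^2)=0$. Finally $\sigma-\varphi_0$ is a difference of commuting semisimple endomorphisms with real eigenvalues, hence is itself real-diagonalizable, so $\mytrace((\sigma-\varphi_0)^2)=\sum_i\mu_i^2=0$ forces $\sigma-\varphi_0=0$, and $\sigma=\varphi_0$ is pre-Einstein. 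The main obstacle here is exactly the structural input that a pre-Einstein derivation can be taken inside $\Der^{ab}(\n_2)$: only then does it both lie in $\mathfrak r$ and commute with $\sigma$, and it is this commutativity that makes the vanishing of $\mytrace((\sigma-\varphi_0)^2)$ force $\sigma-\varphi_0=0$. I expect pinning down this centrality to be the one point demanding care.
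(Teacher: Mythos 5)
The paper does not prove this lemma at all: all three parts are quoted verbatim from \cite{GordonJablonski:EinsteinSolvmanifoldsHaveMaximalSymmetry} (Lemmas 5.4, 5.5, 5.7 there), so there is no in-text argument to compare against. Judged on their own, your three proofs are correct. In (i) the $\slf_2$-triple argument works: for nonzero $F\in\h_{-\lambda}$ the standard construction $E=c\,\theta F$ (with $\theta$ the Cartan involution and $c$ a normalizing constant) produces the triple, and a highest-weight vector of weight $-2$ in the finite-dimensional $\slf_2$-module $\End(V)$ must vanish. Your parenthetical that the brackets $[\h_\lambda,\h_{-\lambda}]$ ``recover $\af+\mf$'' is the one imprecise spot -- what you actually need, and what is true, is that the subalgebra generated by $\n+\theta\n$ is an ideal meeting every simple factor nontrivially, hence all of $\h$ since $\h$ has no compact factors; the conclusion is unaffected. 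In (ii) both of your arguments are standard and valid: $\ad(W)|_{\n}$ is a derivation, so it preserves the lower central series and is invertible on each graded quotient, making the successive-approximation scheme terminate; alternatively the orbit is open by the differential computation and closed because $\Ad_H(N)$ acts unipotently (Kostant--Rosenlicht). In (iii) you correctly identify the crux: by Proposition~\ref{prop.nik}(ii) and conjugacy of maximal fully reducible subalgebras one may place a pre-Einstein derivation $\varphi_0$ in $Z\bigl(\Der_1(\n_2)+\Der^{ab}(\n_2)\bigr)=\Der^{ab}(\n_2)\subseteq\mathfrak r$, and then $\mytrace\bigl((\sigma-\varphi_0)^2\bigr)=0$ for a difference of commuting real-diagonalizable operators forces $\sigma=\varphi_0$. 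This is exactly the role Remark~\ref{rem.nik} plays elsewhere in the paper, and the trace argument is airtight.
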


\begin{proof}[Proof of Proposition~\ref{prop:  form_of_pre-Einstein}]
We let $\rho$ denote the action of $\g_1+\tf +\af_2$ on $\n_2$.

We make a series of claims, proved along the way, which build up to the proof of the proposition.

\begin{enumerate} 
\item Let $D$ be any derivation of $\n$ such that $D(\n)\subset \n_2$.    For $X_1\in \n_1$, we have \begin{equation}\label{brack}[D_{|\n_2},\rho(X_1)]=\ad_{\n_2}(D(X_1)).\end{equation}
Thus the restriction of $D$ to $\n_2$ commutes with $\rho(\n_1)$ if and only if $D(\n_1)\subset Z(\n_2)$.

\item Since $\af+\m+\tf$ acts via semisimple derivations of $\n$, we may choose a pre-Einstein derivation $\psi$ that commutes with $\ad_\n(\af+\m+\tf)$.  (See Remark~\ref{rem.nik}.)
$\psi$ is of the form 

$$\psi=\ad_\n(X)+ D$$
for some $X\in \af_1$
and some derivation $D$ of the form
$$D=\begin{bmatrix}0&0\\ \beta&\delta
\end{bmatrix}$$
relative to the decomposition $\n=\n_1+\n_2$.
Moreover $\delta$ commutes with $\rho(\af + \m +\mathfrak t)$.  

(We are \emph{not} claiming at this point that $\delta$ is a pre-Einstein derivation of $\n_2$.)
\begin{proof} We first claim that $\psi$ preserves $\n_2$ and thus has the ``lower triangular'' block form 
$$\psi=\begin{bmatrix}\gamma&0\\\beta&\delta \end{bmatrix}$$

Choose a base $\Delta$ for the root system of $\g_1$ and let $\Delta^+$ be the positive roots.  For $\alpha\in\Delta$, let $H_\alpha$ satisfy $\alpha(H)=\langle H_\alpha, H\rangle$ for all $H\in \af_1$.   Let $\n_2^-$ be the subspace of $\n_2$ spanned by all weight spaces $V_\beta$ of $\rho(\af_1)$ for which $\beta(H_\alpha)<0$ for at least one $\alpha\in\Delta^+$ and let $\n_2^0$ be the 0-weight space of $\rho(\af_1)$.   Then $\n_2^-+\n_2^0$ generaties $\n_2$ as a $\rho(\g_1)$-module.  (Indeed let $V_\beta$ be a weight space not in $\n_2^-+\n_2^0$.   Then there exists $\alpha\in \Delta^+$ with $\beta(H_\alpha)>0$.  Consider the subalgebra $\s_\alpha\simeq \slf(2,\R)$ spanned by $H_\alpha, X_\alpha, Y_\alpha$, with $X_\alpha$ in the $\alpha$ root space in $\n_1$ and $Y_\alpha$ in the $-\alpha$ root space.   Then the $\rho(\s_\alpha)$-module in $\n_2$ spanned by $V_\beta$ is generated under the action of $\rho(X_\alpha)$ by negative eigenspaces of $\alpha(H_\alpha)$.)  

Now $\n_2^-+\n_2^0$ is invariant under $\psi$ since $\psi$ commutes with $\rho(\af_1)$.   Since $\n_2$ is an ideal in $\n$ and $\psi$ is a derivation of $\n$, the fact that $\n_2^-+\n_2^0$ generates $\n_2$ under the action of $\n_1$ implies that $\n_2$ is invariant under $\psi$.

Finally, Lemma \ref{lemma:  derivations of n1 commuting with a1+m} and the fact that $\psi$ has only real eigenvalues implies that $\gamma=\ad_{\n_1}(X)$ for some $X\in\af_1$.  Defining $D=\psi-\ad_\n(X)$, then $D$ has the desired form.
\end{proof}

\item There exists $U\in\n_2$  such that $\delta+\ad_{\n_2}(U)$ commutes with $\rho(\g_1+\af_2+\tf)$.   Furthermore, we may choose $U\in\n_2^0$, the zero weight space of $\rho(\mathfrak a_1)$ in $\mathfrak n_2$.

\begin{proof} By Equation~(\ref{brack}), applied to the derivation $D$ in (ii), and the fact that $\delta$ commutes with $\rho(\af +\mf+\tf)$, we have that $\R\delta+\ad(\n_2)$ is a subalgebra of $\Der(\n_2)$ normalized by $\rho(\af_1+\n_1 +\m+ \tf +\af_2)$.  Consider the projection $\pi: \Der(\n_2)\to \Der(\n_2)/\ad(\n_2)$.  Again using Equation~(\ref{brack})), we see that $\pi\circ\delta$ commutes with $\pi\circ \rho(\af_1+\n_1+\m+ \tf+\af_2)$.   Lemma~\ref{gjlemma}(i) along with Remark~\ref{rem: unique comp solv} then imply that $\pi\circ\delta$ commutes with $\pi\circ\rho(\g_1+\tf +\af_2)$.  Thus $[\rho(\g_1+\tf+\af_2),\delta]<\ad(\n_2)$.   The first statement follows from complete reducibility of the action of $\rho(\g_1+\tf+\af_2)$.

Next, the fact that $\delta$ and $\delta+\ad_{\n_2}(U)$ both commute with $\rho(\mathfrak a_1)$ implies that $\ad_{\n_2}(U)$ commutes with $\rho(\mathfrak a_1)$.  Thus $\ad_{\mathfrak n_2}  [Y,U] = 0$ for all $Y\in\mathfrak a_1$, i.e.~$[Y,U]\in Z(\mathfrak n_2)$ and so $\mathbb R(U)+Z(\mathfrak n_2)$ is a $\rho(\mathfrak a_1)$ invariant subspace.  Now, as $\rho(\mathfrak a_1)$ acts fully reducibly, we may replace $U$ by $U+Z$ for a suitably chosen $Z\in \n_2$ so that the second statement is satisfied.
\end{proof}

\item  \label{label for remark below}  We may conjugate $\psi$ by an element $\Ad(x)$ of $\Ad(N)$ with $x\in N_2$ to obtain a new pre-Einstein derivation $\tau$ of the form 
$$\tau=\ad_\n(X)+D'$$
where $D'$ is of the form
$$D'=\begin{bmatrix}0&0\\ \beta'&\delta'
\end{bmatrix}$$
with $\delta'=\delta+ \ad(U)$.   In particular, $\delta'$ commutes with $\rho(\g_1+\tf+\af_2)$.  Thus by (i), $\beta'(\n_1)<Z(\n_2)$.

\begin{proof} Since $\psi$ commutes with $\rho(\mathfrak a_1)$, it preserves the zero weight space $n_2^0$.  Since $\n$ admits a nilsoliton, $\psi$ is non-singular (see Propositions~\ref{prop.heb}, \ref{prop.nik} and \ref{proplauret}) and thus so is its restriction $\rho(X)+\delta$ to $\n_2^0$.  

Define $\h=\R(\rho(X)+\delta)+\ad(\n_2^0)$; note, this is a subalgebra.    By Lemma~\ref{gjlemma}(ii), $\rho(X)+\delta$ is conjugate to $\rho(X)+\delta +\ad(U)$ by $\Ad_{N_2}(x)$ for some $x\in \exp(\mathfrak n_2^0)\subset N_2$.   Now conjugate $\psi$ by $\Ad_N(x)$.   
\end{proof}

{\bf Aside.}   When we conjugated $\psi$ by $\Ad(x)$, we may have lost the property that $\tau$ commutes with $\ad(\af_1)$.   The only part of $\tau$ that may not commute with $\ad(\af_1)$, however, must be $\begin{bmatrix}0&0\\\beta' & 0\end{bmatrix}$.

\item $\delta'$ is a pre-Einstein derivation of $\n_2$.   
\begin{proof}
The fact that $\tau$ is semisimple implies that its restriction $\rho(X)+\delta'$ to $\n_2$ is also semisimple.  Then so is $\delta'$ since $\rho(X)$ is semisimple and commutes with $\delta'$.  We need to show that $\mytrace(\delta'D)=\mytrace(D)$ for all $D\in \Der(\n_2)$.   By Lemma~\ref{gjlemma}(iii), it's enough to consider the case that $D$ commutes with $\rho(\g_1)$.   In this case, $D$ extends trivially to $\n$, vanishing on $\n_1$.  Continue to denote the extension by $D$.  Since $\tau$ is a pre-Einstein derivation on $\n$, we have that $\mytrace(\tau D)=\mytrace(D)$.   But $\mytrace(\tau D)=\mytrace((\rho(X)+\delta')D).$    Now $D$ commutes with $\rho(\g_1)$, so $\rho(\g_1)$ leaves each eigenspace of $D$ invariant.   Since $\g_1$ is semisimple, the trace of $\rho(X)$ on each such eigenspace is zero, and hence $\mytrace(\rho(X)D)=0$.   Thus we have 
$$\mytrace(D)=\mytrace(\tau D)=\mytrace(\delta' D)$$
and statement (v) follows.

\end{proof}

 \item Define $\varphi=\ad_\n(X)+\begin{bmatrix}0&0\\ 0&\delta'\end{bmatrix}$.   Then $\varphi\in\Der(\n)$.  It is the desired pre-Einstein derivation.
 \begin{proof} $\varphi$ is semisimple, so we only need to check the trace condition.  Note that $\varphi$ commutes with $\ad(\af_1)$, which is a subalgebra of $\Der(\n)$ that acts fully reducibly.  By Lemma~\ref{gjlemma}(iii), it is enough to check the trace condition on derivations $D$ of $\n$ that commute with $\ad(\af_1)$.   By the proof of statement (ii), such derivations $\D$ must have lower triangular block form and hence the product $\begin{bmatrix}0&0\\\beta'&0\end{bmatrix}D$ has trace zero.   Hence we have
 $$\mytrace(\varphi D)=\mytrace(\tau D)=\mytrace(D)$$
 where the second equation follows from the fact that $\tau$ is a pre-Einstein derivation.    This completes the proof.
 \end{proof}
 
 \end{enumerate}

This completes the proof of Proposition \ref{prop:  form_of_pre-Einstein}.

\end{proof}

\subsection{Step 3: Employing the Einstein metric}

\begin{proof}[Completion of Proof of Theorem~\ref{main}]
In view of Theorem~\ref{oldmain}, it's enough to consider the case that the (unique up to automorphism) solvsoliton on $S$ is not Einstein.  By Proposition~\ref{prop:  form_of_pre-Einstein}, there exists a pre-Einstein derivation $\delta$ of $\n_2$ that commutes with the adjoint action on $\n_2$ of $\g_1+\tf +\af_2$, and there exists an element $X\in\af_1$ such that $\varphi = \ad_\n(X)+\delta$ is a pre-Einstein derivation of $\n$.  

The derivation $\delta$ extends to a derivation of $\g$ acting trivially on $\g_1+\tf+\af_2$ and thus we may form the semi-direct product $\h:=\g\rtimes\R\delta$.   Let $\s'=\s\rtimes\R\varphi\, =\s\rtimes\R\delta= \s_1+(\s_2\rtimes\R\delta)$.   By Proposition~\ref{proplauret}, $\s'$ is the Lie algebra of a simply-connected solvable Lie group $S'=S_1S_2'$ containing $S$ that admits a left-invariant Einstein metric.  Here $S_2'$ is the simply-connected one-dimensional extension of $S_2$ with Lie algebra $\s_2' = \s_2\rtimes\R\delta$.

Since  $\g_1+\tf$ commutes with $\delta$ we have the semi-direct product structure $\h=(\g_1+\tf )\ltimes \s_2'$; now we can construct a one-dimensional extension $H$ of $G$ given by $H=(G_1\times T)\ltimes S'_2$.  Note that $H=(KT)S'$ and $KT\cap S'=\{e\}$.  By Theorem~\ref{oldmain}, the Lie group $H$ acts isometrically on $S'$ relative to some choice of Einstein metric $g_E$.  The restriction of $g_E$ to $S$ is a solvsoliton $g_0$.  The action of $H$ on $S'$ restricts to an isometric action of $G$ on $S$ relative to $g_0$; i.e., $G<\Isom_0(S,g_0)$.    
\end{proof}

\providecommand{\bysame}{\leavevmode\hbox to3em{\hrulefill}\thinspace}
\providecommand{\MR}{\relax\ifhmode\unskip\space\fi MR }
\providecommand{\MRhref}[2]{%
  \href{http://www.ams.org/mathscinet-getitem?mr=#1}{#2}
}
\providecommand{\href}[2]{#2}

\end{document}